\documentclass[11pt]{article}\textwidth 160mm\textheight 235mm
\oddsidemargin-2mm\evensidemargin-2mm\topmargin-10mm
\usepackage{amsfonts}
\usepackage{amssymb}
\usepackage{graphicx}
\usepackage{amsmath}
\usepackage{amsthm}
\usepackage{enumitem}
\usepackage{tikz}
\usepackage{cancel}
\usepackage{nicematrix}
\usepackage{todonotes}
\usetikzlibrary{matrix}
\usetikzlibrary{arrows,shapes}
\usepackage{cite}
\usepackage{hyperref}
\hypersetup{colorlinks=true, urlcolor=blue}
\expandafter\let\expandafter\oldproof\csname\string\proof\endcsname
\let\oldendproof\endproof
\renewenvironment{proof}[1][\proofname]{%
  \oldproof[\ttfamily \scshape \bf #1. ]%
}{\oldendproof}
\def\O{{\bf O}}

\def\S{{\bf {S}}}

\def\B{{\bf B}}
\def\R{{\bf {R}}}
\def\N{{\bf {N}}}
\def\ox{\bar{x}}

\def\oz{\bar{z}}
\def\ov{\bar{v}}
\def\ow{\bar{w}}

\def\ss{\scriptsize }


\def\ve{\varepsilon}

\def\X{{\bf X}}
\def\P{{\bf P}}
\def\Y{{\bf Y}}
\def\hat{\widehat}

\def\dist{{\rm dist}}

\def\Lm{{\Lambda}}
\def\tto{\rightrightarrows}

\def\d{{\rm d}}
\def\l{{\ell}}
\def\sub{\partial}

\def\disp{\displaystyle}
\def\Hat{\widehat}

\def\Bar{\overline}
\def\ra{\rangle}
\def\la{\langle}
\def\ve{\varepsilon}

\def\tr{\mbox{\rm tr}\,}
\def\inte{\mbox{\rm int}\,}

\def\epi{\mbox{\rm epi}\,}

\def\dom{\mbox{\rm dom}\,}

\def\ker{\mbox{\rm ker}\,}

\def\dn{\downarrow}

\def\ph{\varphi}

\def\oR{\Bar{\bf{R}}}

\def\lm{\lambda}

\def\dd{\delta}
\def\al{\alpha}
\def\Th{\Theta}
\def\th{\theta}

\def\sm{\hbox{${1\over 2}$}}

\def\sce{\setcounter{equation}{0}}
\def\Diag{\mbox{\rm Diag}\,}

\def\prox{\mbox{\rm prox}}

\def\verl{ \;\rule[-0.4mm]{0.2mm}{0.27cm}\;}
\def\verll{ \;\rule[-0.7mm]{0.2mm}{0.37cm}\;}
\def\verlm{ \;\rule[-0.5mm]{0.2mm}{0.33cm}\;}

\begin{document}
\vspace*{0.5in}
\begin{center}
{\bf PARABOLIC REGULARITY OF SPECTRAL FUNCTIONS}\\[1 ex]
ASHKAN MOHAMMADI,\footnote{Department of Mathematics, Georgetown University, Washington, D.C. 20057, USA (ashkan.mohammadi@georgetown.edu).}
and  EBRAHIM SARABI\footnote{Department of Mathematics, Miami University, Oxford, OH 45065, USA (sarabim@miamioh.edu).
Research of this    author is partially supported by the U.S. National Science Foundation  under the grant DMS 2108546.}
\end{center}
\vspace*{0.05in}
\small{\bf Abstract.}  This paper is devoted to the study of the second-order variational analysis of  spectral functions. It is well-known that  
spectral functions can be expressed as a composite function of symmetric functions and eigenvalue functions.  We establish several second-order properties of spectral functions when their associated symmetric functions enjoy these properties. 
Our main attention is given  to characterize parabolic regularity for this class of functions. It was observed recently that parabolic regularity can play a central rule in ensuring the validity of important 
second-order variational properties such as twice epi-differentiability. We demonstrates that for convex spectral functions, their parabolic regularity amounts to that of their symmetric functions.
As an important consequence, we calculate the second subderivative of convex spectral functions, which allows us to establish second-order optimality conditions for a class of matrix optimization problems. \\[1ex]
{\bf Key words.} spectral functions,   parabolic regularity, twice epi-differentiability, composite optimization, second-order optimality conditions\\[1ex]
{\bf  Mathematics Subject Classification (2000)}   49J52, 15A18, 49J53

\newtheorem{Theorem}{Theorem}[section]
\newtheorem{Proposition}[Theorem]{Proposition}
\newtheorem{Remark}[Theorem]{Remark}
\newtheorem{Lemma}[Theorem]{Lemma}
\newtheorem{Corollary}[Theorem]{Corollary}
\newtheorem{Definition}[Theorem]{Definition}
\newtheorem{Example}[Theorem]{Example}
\newtheorem{Algorithm}[Theorem]{Algorithm}
\renewcommand{\theequation}{{\thesection}.\arabic{equation}}
\renewcommand{\thefootnote}{\fnsymbol{footnote}}
\newcommand{\bigzero}{\mbox{\normalfont\Large\bfseries 0}}
\normalsize

\section{Introduction}\sce
This paper aims to study  second-order variational properties, including  parabolic regularity and twice epi-differentiability,  of spectral functions. These are functions 
$g:\S^n\to \oR:=[-\infty,\infty]$, where $\S^n$ stands for  the real vector space of $n\times n$ symmetric matrices,  that are orthogonally invariant, namely for any $n\times n$ symmetric   matrix $X$ and any $n\times n$ orthogonal matrix $U$, 
we have 
 $$
 g (X) = g (U^\top X U ).
 $$
 It is well-known (cf. \cite[Proposition~4]{l99}) that any spectral function $g$ can be equivalently expressed in a composite form 
 \begin{equation}\label{spec}
g(X)=(\th\circ\lm)(X), \;\; X\in \S^n,
\end{equation}
where $\th:\R^n\to  \oR$ is a permutation-invariant function on $\R^n$, called symmetric, and $\lm$ is a  function, which assigns to each matrix $X\in \S^n$ its eigenvalue vector $\big(\lm_1(X),\ldots,\lm_n(X)\big)$
arranged in nonincreasing order. 

Davis \cite{da} showed that convexity of the permutation-invariant function $\th$ in \eqref{spec} is inherited by the spectral function $g$. 
Similar observation was made by Lewis \cite{l96} about differentiability and strict differentiability   and by Lewis and Sendov \cite{ls} about twice differentiability. It was shown in \cite{l96, l99} and \cite{dlms}, respectively, 
that  the calculation of different notions of subdifferentials  of spectral functions  and    prox-regularity, which plays an important role in second-order variational analysis, 
enjoy this striking pattern as well.

The main question that we are trying to answer in this paper is whether such a striking pattern can be extended for other important second-order 
variational properties, including parabolic regularity  (see Definition~\ref{pre}) and twice epi-differentiability. While the former  was first  introduced  more than  two decades ago in \cite[Definition~13.69]{rw}, 
its persuasive role in second-order variational analysis was revealed quite recently in \cite{mms2, ms},  where it was shown for the first time that any parabolic regular function is twice epi-differentiable 
at any points in the graph of its subgradient mapping. This observation provided a systematic approach for  the study of twice epi-differentiability of extended-real-valued functions, which has important 
applications in  understanding various second-order variational concpets such as proto-differentiability of subgradient mappings (cf. \cite[Corollary~3.9]{ms}), twice epi-differentiability of the augmented Lagrangian functions associated 
with composite and constrained optimization problems (cf. \cite[Theorem~8.3]{mms2}), and the characterization of the quadratic growth condition for this class of functions (cf. \cite[Theorem~4.1]{hs}). Moreover,  parabolic regularity was utilized in  \cite{mms2, ms}
to obtain the {\em exact} chain rule for the second subderivative of certain composite functions, commonly seen in different classes optimization problems. Such a chain rule has an important application in 
finding  second-order necessary and sufficient optimality conditions for optimization problems.

It was demonstrated  in \cite{ms}  that 
important second-order variational properties of a composite function $\psi\circ F$, where $\psi:\X\to  (-\infty,\infty]$ is convex and $F:\X\to \Y$ is twice differentiable 
with $\X$  and $\Y$ being finite-dimensional Hilbert spaces, can be established at any $\ox\in \dom (\psi\circ F)$ provided that $\psi $ is parabolically regular and that 
the following metric subregularity constraint qualification is satisfied (cf. \cite[Definition~4.2]{ms}): There exists a constant $\kappa\ge 0$  such that 
 the  estimate 
\begin{equation} \label{mscq}
\dist \big( x, \dom (\psi\circ F)\big) \le \kappa\,\dist ( F(x),\dom \psi ) 
\end{equation}
 holds for all $x$ sufficiently close to $\ox$. Thus,  it is natural to ask  whether a similar approach can be utilized for the composite representation in \eqref{spec} of spectral functions.
To do so,   two major obstacles seem to hinder proceeding with the approach in \cite{ms}: 1) the lack of twice differentiability of  the inner function $\lm(\cdot)$ in \eqref{spec}; 2)
the validity of a constraint qualification similar to the aforementioned condition for the composite form  \eqref{spec}. 
Given the composite representation \eqref{spec}, 
it follows from    \cite[Proposition~2.3]{dlms} that   for any $X \in \S^n $, the equality 
\begin{equation} \label{specmsqc}
\dist ( X, \dom g) = \dist ( \lambda(X),\dom \theta )
\end{equation}
always holds. This simple but important observation from \cite{dlms} tells us that the required constraint qualification for dealing with the composite form \eqref{spec}
is automatically satisfied.
Moreover, looking closer into the established theory in \cite{mms2, ms} tells us that twice differentiability of the inner function was  not required. Indeed, a quadratic expansion will suffice to  
proceed in both these publications. Such a quadratic expansion, which is of a parabolic type, is already achieved in \cite{t1} for eigenvalue functions. 
These open the door for using the approach from \cite{mms2, ms} to study second-order variational properties of spectral functions. 

The outline of the paper is as follows. Section~\ref{sect02} recalls important notation and concepts related to the eigenvalue function. 
In Section~\ref{sect03}, we begin with establishing a chain rule for subderivative of the spectral function in \eqref{spec}. Section~\ref{sect04} 
is devoted to the study of the parabolic drivability  of spectral sets. We will obtain a chain rule for the parabolic subderivative, which plays a central role 
in the study of parabolic regularity of spectral functions. It is also shown that the parabolic subderivative is a symmetric function with respect to a subset of 
the space of orthogonal matrices. In  Section~\ref{sect05}, we demonstrate that the spectral function $g$ in \eqref{spec} is parabolically regular if and only if 
the symmetric function $\th$ in \eqref{spec} enjoys this property. As a consequence, we are going to calculate the second subderivative of spectral functions 
 when the symmetric functions associated with them are convex. This allows us to find second-order optimality conditions for a class of matrix optimization problems.

\section{Notation}\sce \label{sect02}
In what follows, $\X$ and $\Y$ are finite-dimensional Hilbert spaces.   By $\B$ we denote the closed unit ball in the space in question and by $\B_r(x):=x+r\B$ the closed ball centered at $x$ with radius $r>0$. For any set $C\subset \X$, its indicator function is defined by $\dd_C(x)=0$ for $x\in C$ and $\dd_C(x)=\infty$ otherwise. We denote by $\dist (x,C)$  the distance between $x\in \X$ and a set $C$.
For $v\in \X$, the subspace $\{w\in \X\, |\, \la w,v\ra=0\}$ is denoted by $[v]^\bot$. 
We denote by $\R_+$ (respectively,  $\R_-$) the set of non-negative (respectively, non-positive) real numbers.
Given an $n \times n$ matrix $Z$ and index sets $I , J \subseteq \{1,\ldots,n\}$, denote by $Z_{IJ}$ the submatrix of $Z$ obtained
by removing all the rows of $Z$ not in $I$ and all the columns of $Z$ not in $J$ . The matrix $Z_I$ is
the submatrix of $Z$ with columns specified by $I$. Particularly, $Z_i$ is the $i$-th column
of $Z$ and $Z_{ij}$ is the entry of $Z$ at $(i, j)$ position. Denote by $Z^{\dagger}$ the Moore–Penrose generalized inverse of $Z$.  
Finally, the cardinality of the set $I\subset \N$, where $\N$ stands for the set of natural numbers,  is denoted by $|I|$.

Throughout this paper, we denote by   $\R^{n\times m}$  the space of all real $n\times m$   matrices and by $\S^n$   the space of all real $n\times n$ symmetric  matrices
equipped with the inner product 
$$
\la X, Y\ra=\tr(XY),\quad X,Y\in \S^n.
$$
The induced Frobenius norm of $X\in \S^n$ is defined via the trace inner product by $\|X\|=\sqrt{\tr(X^2)}$.
Given $X \in \S^n$, its eigenvalues, in   nonincreasing order, are denoted by
$$
 \lambda_{1} (X) \geq \lambda_{2} (X) \geq \cdots \geq \lambda_{n} (X).
 $$  
 For any vector $x=(x_1,\ldots,x_n) \in \R^n$,  denote by $\Diag (x)$ the diagonal matrix whose   $i$-th diagonal entry is   $x_i$ for any $i=1,\ldots,n$.
 The set of all real $n \times n$ orthogonal matrices is denoted by $\O^n$. It is known that  for any $X \in \S^n$, there exists an orthogonal matrix $U$ for which  we have 
\begin{equation}\label{specdocom}
X  =  U \Diag (\lambda (X)) U^\top\quad \mbox{with}\;\;\lm(X):=\big(\lm_1(X),\ldots,\lm_n(X)\big).
\end{equation}
For a given matrix $X\in \S^n$, the set of  such  orthogonal  matrices $U$ is denoted by $\O^n (X)$. We say that two matrices $X,Y\in \S^n$ admit   a simultaneous spectral decomposition if there exists 
$U\in \O^n$ such that $U^\top XU$ and $U^\top YU$ are diagonal matrices. The   matrices  $X$ and $Y$ are said to have a simultaneous {\em ordered} spectral decomposition if there exists 
$U\in \O^n$ such that $U^\top XU=\Diag (\lambda (X)) $ and $U^\top YU=\Diag (\lambda (Y)) $. It is well-known that for any two matrices $X,Y\in \S^n$, the estimate 
\begin{equation}\label{lipei}
\|\lm(X)-\lm(Y)\|\le \|X-Y\|
\end{equation}
always holds. Moreover, equality in this estimate amounts to $X$ and $Y$ admitting a  simultaneous  ordered spectral decomposition.
It is not hard to see that the estimate in \eqref{lipei} amounts to the trace inequality, known as Fan's inequality, 
\begin{equation}\label{fani}
\la X,Y\ra\le \la \lm(X),\lm(Y)\ra,\quad X,Y\in \S^n.
\end{equation}
Assume that  $\mu_{1}(X) >  \cdots> \mu_{r}(X) $ are distinct eigenvalues of $X\in \S^n$ and   define  then  the index sets
\begin{equation}\label{index}
\alpha_{m} :=\big\{  i \in  \{ 1,\ldots, n\}   | \; \lambda_{i} (X)  = \mu_{m}(X)\big\}\quad \mbox{for all}\;\; m=1,\ldots,r.
\end{equation}
Moreover, define $\l_i (X)$ for any $i \in \{1, \ldots, n\} $ to be the number of eigenvalues of $X$ that are equal to $\lambda_{i} (X)$
but are ranked before $\lm_i(X)$ including $\lambda_{i} (X)$. This integer allows us to locate $\lm_i(X)$
in the group of the eigenvalues of  $X$ as follows:
\begin{equation}\label{ell}
\lm_1(X)\ge \cdots \ge \lm_{i-\ell_i(X)}>\lm_{i-\ell_i(X)+1}(X)= \cdots =\lm_i(X)\ge \cdots\ge \lm_n(X).
\end{equation}
 Note that  the index sets $\al_m$ present a partition of $\{1,\ldots,n\}$, meaning that $\{1,\ldots,n\}=\cup_{m=1}^r\al_m$.
 In what follows,   we often drop $X$ from  $\ell_i (X)$ when the dependence of $\l_i$ on $X $ can be seen clearly from the context. 
 Given an $n\times n$ matrix $W$, it is not hard to see that for any $U\in \O^n$ and any $m=1,\ldots,r$, we always have 
 \begin{equation}\label{dfa}
 U_{\al_m}^\top U WU^\top U_{\al_m}=W_{\al_m\al_m}.
 \end{equation}
 This simple observation will often be utilized in Section~\ref{sect05}. 
 
The following estimates are an easy consequence of \cite[Proposition~1.4]{t1} (cf. see    the proof of \cite[Theorem~1.5]{t1}) and    play a major role in  our second-order variational analysis of eigenvalue functions in this paper.
\begin{Proposition}[first-order expansion of eigenvalue functions]\ \label{olemma}
Assume that  $X \in \S^n$ has the eigenvalue decomposition \eqref{specdocom} for some $U \in \O^n (X)$.  Let 
  $\mu_{1} >  \cdots> \mu_{r} $ be distinct eigenvalues of $X$.  Then for any  $H\in \S^n$ that $H\to 0$ and any $i\in \{1,\ldots,n\}$ the estimates 

\begin{equation}\label{secondexp1}
\lambda_{i} (X +  H) = \lambda_{i} (X ) +  \lambda_{\l_i} \big( U_{\al_m}^\top  H U_{\al_m}  + U_{\al_m}^\top  H ( \mu_m I-X)^{\dagger} H U_{\al_m}  \big) + O(\|  H \|^3)
\end{equation}
and 
\begin{equation}\label{fexpan}
\lambda_{i} \big( X + H  \big) = \lambda_{i} (X)  +  \lambda_{\l_i} (U_{\al_m}^{\top} H U_{\al_m}) +  O (\| H \|^2 )   
\end{equation}
hold, where    $m\in \{1,\ldots,r\}$ with  $i \in \alpha_m$. 
\end{Proposition}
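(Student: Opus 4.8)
The plan is to deduce both expansions from the quadratic perturbation estimate \cite[Proposition~1.4]{t1}, exactly as it is exploited in the proof of \cite[Theorem~1.5]{t1}, and to read off the two stated forms by separating the first- and second-order contributions. Since the eigenvalues are orthogonally invariant, I would first pass to the diagonal representative: setting $A:=U^\top H U$ we have $\lm_i(X+H)=\lm_i\big(\Diag(\lm(X))+A\big)$, so all estimates may be carried out for $\Diag(\lm(X))$ perturbed by $A$ and then translated back through \eqref{dfa}. Observe also that \eqref{fexpan} is an immediate consequence of \eqref{secondexp1}: the quadratic term there satisfies $\|U_{\al_m}^\top H(\mu_m I-X)^\dagger HU_{\al_m}\|=O(\|H\|^2)$ because $(\mu_m I-X)^\dagger$ is a fixed bounded operator depending only on $X$, so deleting it only degrades the remainder from $O(\|H\|^3)$ to $O(\|H\|^2)$. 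Thus the whole burden is to establish \eqref{secondexp1}.

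For that, the mechanism I would use is a Schur-complement reduction of the eigenvalue equation on the cluster $\al_m$. Writing the perturbed problem $\big(\Diag(\lm(X))+A-\lm I\big)v=0$ in block form relative to the partition $\{1,\ldots,n\}=\al_m\cup\bar\al_m$ with $\bar\al_m:=\{1,\ldots,n\}\setminus\al_m$, one solves the $\bar\al_m$-block for its component of $v$ and substitutes back, folding the coupling between $\al_m$ and $\bar\al_m$ into the cluster block and producing the reduced $|\al_m|\times|\al_m|$ matrix
\[
M=A_{\al_m\al_m}+A_{\al_m\bar\al_m}\big(\mu_m I-\Diag(\lm(X))\big)_{\bar\al_m\bar\al_m}^{-1}A_{\bar\al_m\al_m}+(\text{higher-order terms}),
\]
whose eigenvalues, added to $\mu_m=\lm_i(X)$, are precisely the eigenvalues of $X+H$ emanating from the cluster. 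The bookkeeping in \eqref{ell} then matches $\lm_i(X+H)$ with the $\l_i$-th largest eigenvalue of $M$, which accounts for the index $\l_i$ in \eqref{secondexp1}. The remaining point is to recognize the middle term of $M$ as the claimed resolvent expression: since $(\mu_m I-X)^\dagger=U\big(\mu_m I-\Diag(\lm(X))\big)^\dagger U^\top$ vanishes on $\al_m$ and inverts $\mu_m I-\Diag(\lm(X))$ off $\al_m$, applying \eqref{dfa} to $W:=A\,\big(\mu_m I-\Diag(\lm(X))\big)^\dagger A$, for which $UWU^\top=H(\mu_m I-X)^\dagger H$, gives
\[
\begin{aligned}
U_{\al_m}^\top H(\mu_m I-X)^\dagger HU_{\al_m}
&=\Big(A\,\big(\mu_m I-\Diag(\lm(X))\big)^\dagger A\Big)_{\al_m\al_m}\\
&=A_{\al_m\bar\al_m}\big(\mu_m I-\Diag(\lm(X))\big)_{\bar\al_m\bar\al_m}^{-1}A_{\bar\al_m\al_m}.
\end{aligned}
\]
Together with the analogous identity $U_{\al_m}^\top HU_{\al_m}=A_{\al_m\al_m}$, this shows $M=U_{\al_m}^\top HU_{\al_m}+U_{\al_m}^\top H(\mu_m I-X)^\dagger HU_{\al_m}+(\text{higher-order terms})$, as required.

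The step I expect to be the main obstacle is controlling the higher-order remainder uniformly, so that it is genuinely $O(\|H\|^3)$ rather than merely $o(\|H\|^2)$ or directional. Two approximations enter the reduction, and each must be shown to cost only a cubic term. First, the $\bar\al_m$-resolvent is evaluated at the unknown eigenvalue $\lm$ rather than at $\mu_m$; since $\lm-\mu_m=O(\|H\|)$ by the first-order expansion and the resolvent is differentiable in $\lm$, this substitution perturbs a quantity that already carries the two factors $A_{\al_m\bar\al_m}$ and $A_{\bar\al_m\al_m}$, hence is $O(\|H\|^2)$, by a further $O(\|H\|)$, i.e. by $O(\|H\|^3)$. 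Second, the resolvent of $\Diag(\lm(X))_{\bar\al_m\bar\al_m}+A_{\bar\al_m\bar\al_m}$ differs from that of $\Diag(\lm(X))_{\bar\al_m\bar\al_m}$ by $O(\|A_{\bar\al_m\bar\al_m}\|)=O(\|H\|)$, again contributing $O(\|H\|^3)$ after multiplication by the two off-diagonal blocks. These are exactly the estimates already secured in \cite[Proposition~1.4]{t1}, so invoking that result renders the remainder uniformly cubic and completes the derivation; the essential care lies in keeping the constant in the $O$-term independent of the direction of $H$ as $H\to0$.
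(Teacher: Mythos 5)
Your proposal is correct and takes essentially the same route as the paper: the paper obtains both estimates by invoking \cite[Proposition~1.4]{t1} together with the argument in the proof of \cite[Theorem~1.5]{t1}, which is precisely the reduction you rely on for the uniform $O(\|H\|^3)$ remainder. Your Schur-complement derivation, the block identification of $U_{\al_m}^\top H(\mu_m I-X)^{\dagger}HU_{\al_m}$ via \eqref{dfa}, and the observation that \eqref{fexpan} follows from \eqref{secondexp1} by Lipschitz continuity of eigenvalues are a faithful unpacking of what that citation contains, so the two arguments coincide in substance.
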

 
 Note that the estimate \eqref{fexpan} clearly tells us that the eigenvalue function $\lm_i(\cdot)$, $i\in \{1,\ldots,n\}$, is directionally differentiable at $X$ at any direction $H\in \S^n$ and 
 its directional derivative $\lambda_i' ( X ; H ) $ can be calculated by 
\begin{equation}\label{dirla}
\lambda_i' ( X ; H ) := \lim_{t \dn 0} \dfrac{ \lambda_i (X + t H ) - \lambda_i (X ) }{t}=  \lambda_{\l_i} (U_{\al_m}^{\top} H U_{\al_m})  
\end{equation}
 where    $m\in \{1,\ldots,r\}$ with  $i \in \alpha_m$.  In another words, we have 
 $$
 \lambda^{'} ( X ; H )=\big(\lambda (U_{\al_1}^{\top} H U_{\al_1}),\ldots,\lambda (U_{\al_r}^{\top} H U_{\al_r}) \big) 
 $$
where  $\lambda (U_{\al_m}^{\top} H U_{\al_m})\in  \R^{|\al_m|}$ for any  $m = 1,\ldots,r$.
 This observation indicates that both estimates in \eqref{secondexp1} and \eqref{fexpan} are, indeed, 
 a first-order estimate of eigenvalue function $\lm_i(\cdot)$. To obtain a second-order estimate, we need to repeat a similar argument for each of the symmetric matrices $U_{\al_m}^{\top} H U_{\al_m}$
 for any $m\in \{1,\ldots,r\}$. To this end, fix $m\in \{1,\ldots,r\}$ and observe that  $U_{\al_m}^{\top} H U_{\al_m}\in \S^{| \al_m |}$. Thus,  we find $Q_m   \in \O^{| \al_m |} (U_{\al_m}^\top  H U_{\al_m}) $
 such that 
 \begin{equation}\label{sdt1}
 U_{\al_m}^{\top} H U_{\al_m}=Q_m\Lambda(U_{\al_m}^\top  H U_{\al_m})Q_m^\top.
 \end{equation}
Denote by $\eta_{1}^{m} >     \cdots> \eta_{\rho_m}^{m}$ the distinct eigenvalues of $U_{\al_m}^\top H U_{\al_m}$.
 Similar to \eqref{index},  define the index sets 
\begin{equation}\label{index2}
 \beta_{j}^m :=\big \{ i \in \{1,\ldots,  | \al_m| \} \big |\; \lambda_i (U_{\al_m}^\top H U_{\al_m}) = \eta_{j}^m  \big\} \quad \mbox{for all}\;\;  j=1,\ldots, \rho_{m}.
\end{equation}

To state the promised second-order estimate for eigenvalue functions, we need to clarify some of indices,  appeared therein. To do so, pick $i\in \{ 1,\ldots,n  \}$, and
observe that there is $m\in \{1,\dots,r\}$ such that $i\in \al_m$ and  that $\l_i(X)\in  \{1,\ldots, |\al_m| \}$, where $\l_i(X)$ is defined by   \eqref{ell}. 
Furthermore, we find $j \in \{1,\ldots, \rho_m \}$ such that $\l_i(X) \in \beta^{m}_{j}$. Define now the integer $\l'_i (X,H)$ by 
$$
\l'_i (X,H)=\l_{\l_i(X)}(U_{\al_m}^{\top} H U_{\al_m}),
$$
which , in fact, signifies the number of eigenvalues of $U_{\al_m}^\top H U_{\al_m}$ that are equal to $\lambda_{\l_i(X)} (U_{\al_m}^\top H U_{\al_m})$ 
but are ranked before $\lambda_{\l_i(X)} (U_{\al_m}^\top H U_{\al_m})$ including $\lambda_{\l_i(X)} (U_{\al_m}^\top HU_{\al_m})$.
As before, we often drop $X$ and $H$ from  $\l'_i (X,H)$ when the dependence of $\l'_i$ on $X $ and $H$ can be seen clearly from the context.   
 In summary, for any
$i\in \{ 1,\ldots,n  \}$, there are $m\in \{1,\dots,r\}$ and $j \in \{1,\dots, \rho_m \}$ for which we have, respectively,
\begin{equation}\label{dex2}
i \in \al_m\quad \mbox{and}\quad \l_i(X) \in \beta^{m}_{j}.
\end{equation}
The following second-order estimate of eigenvalue functions was established in \cite[Proposition~2.2]{t1} and has important consequences for second-order variational analysis 
of eigenvalue functions; see also \cite[Proposition 2.1]{zzx}. 
\begin{Proposition}[second-order expansion of eigenvalue functions]\label{secondexp}
Assume that  $X \in \S^n$ has the eigenvalue decomposition \eqref{specdocom} for some $U \in \O^n (X)$ and that $H,W\in \S^n$. 
 Let 
  $\mu_{1} >  \cdots> \mu_{r} $ be distinct eigenvalues of $X$. 
Then for any $t >0$ sufficiently small and any $i \in \{ 1,\ldots,n  \}$  we have 
\begin{equation*} 
\lambda_{i} (Y(t))= \lambda_{i} (X ) + t \lambda_{\l_i} ( U_{\al_m}^\top  H U_{\al_m} )+ \sm t^2  \lambda_{\ss \l'_i} \Big( {R_{mj}}^{\top}  \big( U_{\al_m}^\top ( W + 2  H (\mu_mI  - X)^{\dagger} H ) U_{\al_m} \big) R_{mj} \Big) + o(t^2),
\end{equation*}
where $Y(t):=X + t H + \frac{1}{2} t^2 W+o(t^2)\in \S^n$, and  $ R_{mj}:=(Q_m)_ {\beta^m_j}$ with $Q_m$ and $\beta^m_j$ taken from \eqref{sdt1} and \eqref{dex2}, respectively, and where the indices  $m$ and $j$ come from \eqref{dex2}. 
\end{Proposition}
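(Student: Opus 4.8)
The plan is to derive this second-order estimate by applying the first-order parabolic expansion of Proposition~\ref{olemma} \emph{twice} in a nested fashion, reducing the size of the matrix block at each stage and carefully propagating the eigenvalue labels. First I would set $\tilde H:=Y(t)-X=tH+\sm t^2 W+o(t^2)$ and note that $\|\tilde H\|=O(t)$, so $\tilde H\to 0$ as $t\dn 0$. Applying \eqref{secondexp1} with the perturbation $\tilde H$ then gives
\[
\lambda_i(Y(t))=\lambda_i(X)+\lambda_{\l_i}\big(M(t)\big)+O(t^3),\qquad M(t):=U_{\al_m}^\top \tilde H U_{\al_m}+U_{\al_m}^\top \tilde H(\mu_m I-X)^{\dagger} \tilde H U_{\al_m},
\]
where $m$ is the index with $i\in\al_m$. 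The first routine task is to expand $M(t)$ in powers of $t$: the linear part yields $t\,U_{\al_m}^\top HU_{\al_m}$, the $(\mu_m I-X)^{\dagger}$-term is quadratic in $\tilde H$ and so contributes $t^2 U_{\al_m}^\top H(\mu_m I-X)^{\dagger}HU_{\al_m}$ at leading order, and the $\sm t^2 W$ part of $\tilde H$ contributes $\sm t^2 U_{\al_m}^\top WU_{\al_m}$, all other cross terms being $o(t^2)$. Writing $A:=U_{\al_m}^\top HU_{\al_m}$ and $B:=U_{\al_m}^\top\big(W+2H(\mu_m I-X)^{\dagger}H\big)U_{\al_m}$, this gives $M(t)=tA+\sm t^2 B+o(t^2)$.

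Next I would use the positive homogeneity of the eigenvalue map to factor out $t>0$, obtaining $\lambda_{\l_i}\big(M(t)\big)=t\,\lambda_{\l_i}\big(A+\sm tB+o(t)\big)$. Now $A\in\S^{|\al_m|}$ is precisely the matrix whose spectral decomposition is recorded in \eqref{sdt1} through $Q_m$, with the distinct-eigenvalue index sets $\beta_j^m$ from \eqref{index2}, and $\l_i(X)\in\beta_j^m$ for the $j$ selected in \eqref{dex2}. Applying the first-order estimate \eqref{fexpan} to the \emph{base} matrix $A$ with the vanishing perturbation $\sm tB+o(t)$, and recalling that the refined index of $\lambda_{\l_i(X)}$ inside the $\beta_j^m$-block is by definition $\l'_i=\l_{\l_i(X)}(A)$, I obtain
\[
\lambda_{\l_i}\big(A+\sm tB+o(t)\big)=\lambda_{\l_i}(A)+\sm t\,\lambda_{\ss\l'_i}\big(R_{mj}^\top B\,R_{mj}\big)+O(t^2),\qquad R_{mj}=(Q_m)_{\beta_j^m}.
\]
Multiplying through by $t$, substituting back the definitions of $A$ and $B$, and combining with the outer expansion produces exactly the asserted formula, because the total accumulated error is $O(t^3)=o(t^2)$.

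The computations above are essentially routine big-$O$/little-$o$ arithmetic together with positive homogeneity; the genuinely delicate point—and the one I would treat most carefully—is the bookkeeping of the four interlocking indices $m$, $j$, $\l_i$, and $\l'_i$. One must verify that after factoring out $t$ the branch continuing $\lambda_i(\cdot)$ is indeed the $\l_i(X)$-th largest eigenvalue of $A+\sm tB+o(t)$, and that the location of this eigenvalue within the ordered spectrum of $A$ is faithfully encoded by the block membership $\l_i(X)\in\beta_j^m$ and its refinement $\l'_i$. This forces a careful appeal to the definitions \eqref{ell}, \eqref{index2}, and \eqref{dex2}, to guarantee that the ordered eigenvalue labels are consistent across the two invocations of Proposition~\ref{olemma}. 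A secondary subtlety worth flagging is the legitimacy of using the coarser estimate \eqref{fexpan} (rather than \eqref{secondexp1}) in the second stage: since its error term is $O(\|\sm tB+o(t)\|^2)=O(t^2)$ and is subsequently multiplied by the factored $t$, it safely collapses into the $o(t^2)$ remainder, which is why no second $(\eta^m_j I-A)^{\dagger}$-correction is needed at the inner level.
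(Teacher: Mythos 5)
Your proof is correct. Note that the paper itself does not prove this proposition: it is quoted from \cite[Proposition~2.2]{t1} (see also \cite[Proposition~2.1]{zzx}), so there is no internal proof to compare against. Your derivation---bootstrapping the result from Proposition~\ref{olemma} by first applying \eqref{secondexp1} to the full perturbation $\tilde H(t)=tH+\frac{1}{2}t^2W+o(t^2)$, factoring out $t$ by positive homogeneity of $\lambda(\cdot)$, and then applying \eqref{fexpan} to the base matrix $A=U_{\al_m}^\top HU_{\al_m}$ with the vanishing perturbation $\frac{1}{2}tB+o(t)$---is a legitimate, self-contained argument using only what the paper states, and it makes the paper's exposition independent of the external reference. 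Your index bookkeeping ($i\in\al_m$, $\l_i(X)\in\beta_j^m$, $\l'_i=\l_{\l_i(X)}(A)$, $R_{mj}=(Q_m)_{\beta_j^m}$) matches \eqref{sdt1}, \eqref{index2}, and \eqref{dex2} exactly; in particular, the ``branch-tracking'' issue you flag as delicate is in fact already resolved by \eqref{secondexp1} itself, which directly hands you $\lambda_i(Y(t))=\lambda_i(X)+\lambda_{\l_i}\big(M(t)\big)+O(t^3)$ with the correct inner label $\l_i$, so no separate verification is needed there.

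One small bookkeeping correction: in your inner expansion the error term cannot be claimed to be $O(t^2)$. The perturbation of $A$ is $E(t)=\frac{1}{2}tB+o(t)$, and while \eqref{fexpan} itself contributes $O(\|E(t)\|^2)=O(t^2)$, passing from $\lambda_{\l'_i}\big(R_{mj}^\top E(t)R_{mj}\big)$ to $\frac{1}{2}t\,\lambda_{\l'_i}\big(R_{mj}^\top BR_{mj}\big)$ requires absorbing the $o(t)$ tail of $E(t)$, which is done via the $1$-Lipschitz property \eqref{lipei} of the eigenvalue map at a cost of $o(t)$---and that tail need not be $O(t^2)$. Consequently the total accumulated error is $o(t^2)$, not $O(t^3)$ as you assert in closing. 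Since the proposition only claims an $o(t^2)$ remainder, this changes nothing in the conclusion, but the Lipschitz step should be invoked explicitly rather than folded silently into the displayed $O(t^2)$.
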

 
In the framework of  Proposition~\ref{secondexp}, we can  conclude from \eqref{dirla} that for any $i \in \{ 1,\ldots,n \}$, 
the parabolic second-order directional derivative of the eigenvalue function $\lm_i(\cdot)$ at $X$ for $H$ with respect to $W$, denoted   $\lambda_i^{''} (X; H, W)$, exists. 
Recall that the latter concept is defined by 
\begin{equation*} 
\lambda_i^{''} (X; H, W)= \lim_{ 
   t\dn 0} \dfrac{ \lambda_i (X + t H + \frac{1}{2} t^2 W) - \lambda_i (X ) - t \lambda'_i ( X ; H ) }{\frac{1}{2}t^2}.
\end{equation*}
According to Proposition~\ref{secondexp}, we can conclude further that 
\begin{equation}\label{paralam}
\lambda_i^{''} (X; H, W)= \lambda_{\ss \l'_i} \Big( {R_{mj}}^{\top}  \big( U_{\al_m}^\top ( W + 2  H (\mu_m I  - X)^{\dagger} H ) U_{\al_m} \big) R_{mj} \Big).
\end{equation}
Combining these with \eqref{dirla} brings us to the following estimate for the eigenvalue function $\lm(\cdot)$ from \eqref{specdocom}, important for our development in this paper.  
 
\begin{Corollary}
Assume that  $X \in \S^n$ has the eigenvalue decomposition \eqref{specdocom} for some $U \in \O^n (X)$ and that $H,W\in \S^n$. Then for any $t >0$ sufficiently small
we have 
\begin{equation}\label{secondexp2}
\lambda \big(X + t H + \sm t^2 W+o(t^2)\big)= \lambda (X ) + t \lambda'( X;H)+ \sm t^2 \lambda^{''} (X; H, W)+ o(t^2).
\end{equation}
 \end{Corollary}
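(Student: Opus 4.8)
The plan is to establish the claimed vector expansion \eqref{secondexp2} coordinate by coordinate and then assemble the $n$ resulting scalar identities into a single vector equation. The whole argument amounts to recognizing that the first- and second-order terms already appearing in Proposition~\ref{secondexp} are, by definition, nothing but the directional derivative $\lambda_i'(X;H)$ recorded in \eqref{dirla} and the parabolic second-order directional derivative $\lambda_i''(X;H,W)$ recorded in \eqref{paralam}. No new analytic estimate is required beyond what Proposition~\ref{secondexp} already supplies.

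First I would fix an arbitrary index $i\in\{1,\ldots,n\}$ and apply Proposition~\ref{secondexp} with $Y(t)=X+tH+\sm t^2 W+o(t^2)$ playing the role of the argument. This yields, for all sufficiently small $t>0$,
$$
\lambda_i(Y(t))=\lambda_i(X)+t\,\lambda_{\l_i}(U_{\al_m}^\top H U_{\al_m})+\sm t^2\,\lambda_{\ss \l'_i}\Big(R_{mj}^\top\big(U_{\al_m}^\top(W+2H(\mu_m I-X)^\dagger H)U_{\al_m}\big)R_{mj}\Big)+o(t^2),
$$
where $m$ and $j$ are the indices attached to $i$ through \eqref{dex2}. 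I would then invoke \eqref{dirla} to rewrite the coefficient of $t$ as $\lambda_i'(X;H)$, and \eqref{paralam} to rewrite the coefficient of $\sm t^2$ as $\lambda_i''(X;H,W)$, thereby obtaining the scalar expansion
$$
\lambda_i(Y(t))=\lambda_i(X)+t\,\lambda_i'(X;H)+\sm t^2\,\lambda_i''(X;H,W)+o(t^2).
$$

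The final step is to pass from these $n$ scalar identities to the vector identity \eqref{secondexp2}. Since $\{1,\ldots,n\}$ is finite, the $n$ individual remainder functions, each $o(t^2)$, combine into a single remainder that is $o(t^2)$ as a vector in $\R^n$; stacking the scalar expansions coordinatewise then reproduces \eqref{secondexp2} exactly, because the right-hand sides assemble precisely into the vector-valued quantities $\lambda'(X;H)$ and $\lambda''(X;H,W)$. The only point demanding any care, and the closest thing to an obstacle in an otherwise purely organizational argument, is this uniformity of the $o(t^2)$ remainder across the finitely many coordinates, together with checking that the indices $m$, $j$, $\l_i$, and $\l'_i$ selected for each $i$ are consistent with the definitions underlying \eqref{dirla} and \eqref{paralam}, so that the coordinatewise substitutions genuinely recover the full vectors rather than a permuted or misaligned version of them.
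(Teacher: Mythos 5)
Your proposal is correct and matches the paper's own (implicit) argument exactly: the paper presents this corollary as an immediate consequence of Proposition~\ref{secondexp} once the coefficient of $t$ is identified with $\lambda_i'(X;H)$ via \eqref{dirla} and the coefficient of $\sm t^2$ with $\lambda_i''(X;H,W)$ via \eqref{paralam}, stacked coordinatewise. Your extra remark that finitely many scalar $o(t^2)$ remainders combine into a single vector $o(t^2)$ is the only detail the paper leaves unstated, and it is handled correctly.
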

 
We proceed with recalling some concepts, utilized extensively in this paper. Given a nonempty set $C\subset\X$ with $\ox\in C$, the  tangent cone $T_ C(\ox)$ to $C$ at $\ox$ is defined by
\begin{equation*}\label{2.5}
T_C(\ox)=\big\{w\in\X|\;\exists\,t_k{\dn}0,\;\;w_k\to w\;\;\mbox{ as }\;k\to\infty\;\;\mbox{with}\;\;\ox+t_kw_k\in C\big\}.
\end{equation*}
We say a tangent vector $w\in T_C(\ox)$ is {\em derivable} if there exist a constant  $\ve>0$ and an arc $\xi:[0,\ve]\to C$ such that  $\xi(0)=\ox$ and $\xi'_+(0)=w$, where  $
\xi'_+(0):=\lim_{t\dn 0} {[\xi(t)-\xi(0)]}/{t}
$
signifies the right derivative of $\xi$
at $0$.
The set $C$ is called geometrically derivable at $\ox$ if every tangent vector $w$ to $C$ at $\ox$ is derivable. The geometric derivability of $C$ at $\ox$ can be equivalently described by the sets $[C-\ox]/{t}$ 
converging to $T_C(\ox)$  as $t\dn 0$ in the sense of the Painlev\'e-Kuratowski set convergence (cf. \cite[Definition~4.1]{rw}). 

 Given a function $f:\X \to \oR$, its domain is defined  by 
$\dom f =\big\{ x \in \X|\; f(x) < \infty \big \}$.
The  function $f$ is called locally Lipschitz continuous around $\ox $ {\em relative} to $C \subset \dom f$
with constant $\ell \ge0 $
if  $\ox \in C$ with $f(\ox)$ finite  and there exists  a neighborhood $U$ of $\ox$ such that 
\begin{equation*} \label{lipwrtdomain}
 |f(x)  - f(y ) | \leq  \ell\, \| x - y \|  \quad \mbox{for all }\;       x , y \in U \cap C.    
\end{equation*}
Such a function is called locally Lipschitz continuous relative to $C$ if  it is locally Lipschitz continuous around every $\ox\in C $ relative to $C$. 
Piecewise linear-quadratic functions (not necessarily convex) and an indicator function of a nonempty set are important examples of 
functions that are locally Lipschitz continuous relative to their {domains}.    
The subderivative function of $f$ at $\ox$, denoted by $\d f(\ox)\colon\X\to \oR$,  is defined by
$$
{\mathrm d}f(\ox)(\ow)=\liminf_{\substack{
   t\dn 0 \\
  w\to \ow
  }} {\frac{f(\ox+tw)-f(\ox)}{t}}.
$$
 When $f$ is convex, its  subdifferential    at $\ox$ with $f(\ox)$ finite, denoted by $\sub f(\ox)$,  is understood in the sense of convex analysis, namely $v\in \sub f(\ox)$ if   $f(x)\ge  f(\ox)+\la v,x-\ox\ra $ for any $x\in \X$.
 Given a nonempty convex set $C\subset \X$, its normal cones to $C$ at $\ox\in C$ is defined  by $ N_C(\ox)= \sub  \dd_C(\ox) $.
 The second-order tangent set to $C\subset \X$ at $\ox\in C$ for a tangent vector $w\in T_C(\ox)$ is given by
\begin{equation}\label{2tan}
T_C^2(\ox, w)=\big\{u\in\X|\;\exists\,t_k{\downarrow}0,\;\;u_k\to u\;\;\mbox{ as }\;k\to\infty\;\;\mbox{with}\;\;\ox+t_kw+\sm t_k^2 u_k\in C\big\}.
\end{equation}
A set $C$ is called {parabolically derivable} at $\ox$ for $w$ if $T_C^2(\ox, w)$ is nonempty and for each $u\in T_C^2(\ox, w)$ there are $\ve>0$ and an arc $\xi:[0,\ve]\to C$ with $\xi(0)=\ox$, 
$\xi'_+(0)=w$, and $\xi''_+(0)=u$, where $\xi''_+(0):=\lim_{t\dn 0}[{\xi(t)-\xi(0)-t\xi'_+(0)}]/{\sm t^2}$.
It is   known that if $C$ is convex and parabolically  derivable at $\ox$ for  $w$, then  the second-order tangent set $T_C^2(\ox, w)$
is a nonempty convex set in $\X$ (cf. \cite[page~163]{bs}). Below, we record a simple characterization of parabolic derivability of a set, used extensively in our paper.
 \begin{Proposition} \label{pdch} Assume that $C\subset \X$, $\ox\in C$,  and $w\in T_C(\ox)$. Then, the following are equivalent:
\begin{itemize}[noitemsep,topsep=0pt]
 \item [{\rm (a)}]  $C$ is parabolically derivable at $\ox$ for $w$;
 \item [{\rm (b)}]  for any $u\in T_C^2(\ox,w)$, we find $\ve>0$ such that 
 $$
 \ox+ tw+\sm t^2 u+o(t^2)\in C\quad \mbox{for all}\;\; t\in [0,\ve].
 $$
 \end{itemize}
\end{Proposition}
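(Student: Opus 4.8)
The plan is to prove the two implications directly from the definitions, the entire content being a Taylor-type translation between the two descriptions of a second-order curve: possessing an arc $\xi$ with $\xi(0)=\ox$, $\xi'_+(0)=w$, $\xi''_+(0)=u$ that stays in $C$ is the same as having a path of the special form $t\mapsto\ox+tw+\tfrac12 t^2u+o(t^2)$ that stays in $C$. Both formulations are ``for all small $t$'' statements, so the proposition asserts that these two ways of upgrading a mere second-order tangent vector $u\in T_C^2(\ox,w)$ to a genuine curve coincide.

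For the implication (a)$\Rightarrow$(b), I would fix $u\in T_C^2(\ox,w)$ and use parabolic derivability to produce $\ve>0$ and an arc $\xi:[0,\ve]\to C$ with $\xi(0)=\ox$, $\xi'_+(0)=w$, and $\xi''_+(0)=u$. Setting the remainder $r(t):=\xi(t)-\ox-tw-\tfrac12 t^2u$, the identity $\xi''_+(0)=\lim_{t\dn0}[\xi(t)-\ox-tw]/(\tfrac12 t^2)=u$ rearranges to $\lim_{t\dn0}r(t)/t^2=0$, i.e.\ $r(t)=o(t^2)$. Since $\ox+tw+\tfrac12 t^2u+r(t)=\xi(t)\in C$ for all $t\in[0,\ve]$, this is precisely the inclusion required in (b).

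For the converse (b)$\Rightarrow$(a), I would again fix $u\in T_C^2(\ox,w)$; by (b) there are $\ve>0$ and a remainder $r(t)=o(t^2)$ with $\xi(t):=\ox+tw+\tfrac12 t^2u+r(t)\in C$ on $[0,\ve]$. I would then verify the three derivative conditions for this $\xi$ directly: $\xi(0)=\ox$ (on setting $r(0)=0$), $\xi'_+(0)=\lim_{t\dn0}[w+\tfrac12 tu+r(t)/t]=w$ because $r(t)/t=o(t)$, and $\xi''_+(0)=\lim_{t\dn0}[u+2r(t)/t^2]=u$ because $r(t)/t^2\to0$. Thus $\xi$ is an admissible arc for every $u\in T_C^2(\ox,w)$, which is exactly parabolic derivability.

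I expect no serious obstacle here---the result is a bookkeeping equivalence---but two points warrant care. First, parabolic derivability as defined also requires $T_C^2(\ox,w)\neq\emptyset$; since (b) is vacuous when this set is empty, I would treat the nonemptiness as part of the setting in which the characterization is invoked (it holds in every application in the paper). Second, the $o(t^2)$ term supplied by (b) need not be continuous away from $0$, but since only its behavior as $t\dn0$ enters the one-sided derivatives $\xi'_+(0)$ and $\xi''_+(0)$, and the membership $\xi(t)\in C$ is a pointwise condition, the curve built from it is a legitimate arc for the notion used here.
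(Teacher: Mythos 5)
Your proof is correct and takes essentially the same route as the paper's: both directions reduce to the same remainder bookkeeping, and your $r(t)$ is exactly the paper's $v(t)=\tfrac12 t^2(u(t)-u)$ in the implication (a)$\Rightarrow$(b). Your side remark about nonemptiness of $T_C^2(\ox,w)$ flags a degenerate case (where (b) holds vacuously but (a) fails) that the paper's own proof also passes over silently, and your resolution---treating nonemptiness as part of the setting in which the equivalence is invoked---matches how the proposition is actually used there.
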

 \begin{proof} If  (b) is satisfied, one can define $\xi(t)=\ox+ tw+\sm t^2 u+o(t^2)$ for any $ t\in [0,\ve]$ with $\ve$ taken from (b).
 It is easy to see that  $\xi(0)=\ox$, 
$\xi'_+(0)=w$, and $\xi''_+(0)=u$, which confirm  (a). Suppose that  (a) holds and then pick $u\in T_C^2(\ox, w)$. Since   $C$ is parabolic drivable at $\ox$ for $w$, we find 
 $\ve>0$ and an arc $\xi:[0,\ve]\to C$  such that $\xi(0)=\ox$, $\xi_+'(0)=w $, and 
$\xi''_+(0)=u$. Set $u(t)=(\xi(t)-\xi(0)- t\xi_+'(0))/{\sm t^2}$ for any $t\in [0,\ve]$. It follows from $\xi''_+(0)=u$ that $u(t)\to u$
as $t\dn 0$. By the definition of $\xi''_+(0)$, we get 
$$
 \xi(0)+ t\xi_+'(0)+\sm t^2 u(t)=\xi(t)\in C.
$$
 One other hand, one can express $\xi(t)$ equivalently as 
$$
\xi(t)= \xi(0)+ t\xi_+'(0)+\sm t^2 u+ v(t)\quad \mbox{with}\;\; v(t):=\sm t^2(u(t)- u).
$$
Clearly, we have $v(t)=o(t^2)$, which proves (b) and hence completes the proof.
 \end{proof}

\section{Subderivatives of Spectral Functions}\sce \label{sect03}
In this section, we present two important results about the spectral functions, central to our developments in this paper. 
 The first one presents a counterpart of the estimate in \eqref{specmsqc} for symmetric functions in Proposition~\ref{mscqforset}.
The second one presents   a chain rule for the subderivative of spectral functions in Theorem~\ref{specsubd}.
To state the former about symmetric functions, recall that a function $\th:\R^n\to \oR$ is called symmetric if for every 
$x\in \R^n$ and every $n\times n$ permutation matrix $Q$, we have $\th(Qx)=\th(x)$. Recall also that $Q$ is a permutation matrix if all its components are either $0$ and 
$1$ and each row and each column has exactly one nonzero element. We denote by $\P^n$ the set of all $n\times n$ permutation matrices.  As pointed out before, for any spectral function 
$g:\S^n\to \oR$, there exists a symmetric function  $\th:\R^n\to \oR$ satisfying \eqref{spec}. Indeed, $\th$ can be chosen as 
the restriction of $g$ to   diagonal matrices, namely 
\begin{equation}\label{spec2}
\th(x)=g\big(\Diag(x)\big) \quad \mbox{for all}\;\; x\in \R^n. 
\end{equation}
A set  $C\subset \S^n$ is called a spectral set if   $\dd_C$ is a spectral function. 
Likewise, $\Th\subset \R^n$ is called a symmetric set if $\dd_\Th$ is a symmetric function. 
Similar to \eqref{spec}, it is easy to see that for any spectral set $C \subset \S^n$,  there exists a symmetric set $\Theta \subset \R^n$ such that
\begin{equation}\label{spectralset}
C = \big\{X \in \S^n  \big| \;  \lambda (X) \in \Theta \big\},
\end{equation}
where $\Th$ can be chosen as 
\begin{equation}\label{setth}
\Th=\big\{ x \in \R^n \big| \; \Diag (x) \in C \big\}.
\end{equation}
The composite forms  \eqref{spec} and  \eqref{spec2}  readily imply, respectively,  that 
\begin{equation}\label{domcs}
\dom g = \big\{ X \in \S^n \big| \;  \lambda (X) \in \dom \theta   \big\}\quad \mbox{and}\quad \dom \th =\big\{ x \in \R^n \big| \; \Diag (x) \in \dom g \big\}.
\end{equation}
Next, we are going to justify a similar estimate as \eqref{specmsqc} for domains of symmetric functions, which allows us to show via the established theory for composite functions in \cite{ms} that 
second-order variational properties of spectral functions are inherited by symmetric functions. 
\begin{Proposition}\label{mscqforset} Let $g:\S^n\to \oR$  be a spectral  function, represented by \eqref{spec}. Then  for any  $x \in \R^n$  we have 
\begin{equation}\label{revset2}
\dist (x , \dom \th) = \dist \big(\Diag (x) ,\dom g\big),
\end{equation}
where $\th$ is taken by \eqref{spec}.
\end{Proposition}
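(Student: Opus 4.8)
The plan is to deduce \eqref{revset2} from the already-established identity \eqref{specmsqc} by specializing it to the diagonal matrix $X=\Diag(x)$ and then correcting for the reordering of coordinates that the eigenvalue map performs. The whole argument rests on two observations: that the eigenvalues of a diagonal matrix are just its diagonal entries (up to sorting), and that the distance to a symmetric set is unaffected by permuting coordinates.

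First I would apply \eqref{specmsqc} with $X=\Diag(x)$. The eigenvalues of $\Diag(x)$ are precisely the entries $x_1,\ldots,x_n$ of $x$, so $\lm(\Diag(x))$ is nothing but $x$ with its components sorted in nonincreasing order; equivalently $\lm(\Diag(x))=Q_x\,x$ for some permutation matrix $Q_x\in\P^n$ depending on $x$. Substituting into \eqref{specmsqc} therefore yields $\dist(\Diag(x),\dom g)=\dist(Q_x x,\dom\th)$.

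Second, I would show that the distance to $\dom\th$ is invariant under any permutation of coordinates. Since $\th$ is symmetric, the identity $\th(Qy)=\th(y)$ for every $Q\in\P^n$ forces $Q(\dom\th)=\dom\th$, so $\dom\th$ is a permutation-invariant subset of $\R^n$ (this is also transparent from \eqref{domcs}). Because every permutation matrix $Q$ is orthogonal and hence an isometry of $\R^n$, a change of variable $z=Q^{-1}y$ in the defining infimum gives
\[
\dist(Qx,\dom\th)=\inf_{y\in\dom\th}\|Qx-y\|=\inf_{z\in\dom\th}\|x-z\|=\dist(x,\dom\th),
\]
where the middle equality uses $\|Qx-y\|=\|x-Q^{-1}y\|$ together with $Q^{-1}(\dom\th)=\dom\th$. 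Taking $Q=Q_x$ we obtain $\dist(Q_x x,\dom\th)=\dist(x,\dom\th)$, and combining this with the display from the first step proves \eqref{revset2}.

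As for difficulty, there is no genuine obstacle: the substantive content sits entirely in the previously recorded identity \eqref{specmsqc} (from \cite[Proposition~2.3]{dlms}), and the rest is the elementary fact that permutations are isometries leaving the symmetric set $\dom\th$ invariant. The only points needing a line of care are confirming that $\dom\th$ is permutation-invariant, which is immediate from symmetry of $\th$, and that $\lm(\Diag(x))$ is exactly a permutation of $x$ rather than some more intricate transformation.
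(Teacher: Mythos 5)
Your proof is correct, but it takes a different route from the paper's. You derive \eqref{revset2} as a quick corollary of the cited identity \eqref{specmsqc} from \cite[Proposition~2.3]{dlms}, specialized to $X=\Diag(x)$, plus the observation that permutations are isometries preserving the symmetric set $\dom\th$; since \eqref{specmsqc} is imported from the literature rather than derived from Proposition~\ref{mscqforset}, there is no circularity and the argument stands. The paper instead gives a self-contained two-inequality proof: for ``$\dist(x,\dom\th)\le\dist(\Diag(x),\dom g)$'' it uses only the nonexpansiveness of the eigenvalue map \eqref{lipei} together with permutation invariance of $\dom\th$ (essentially the easy half of \eqref{specmsqc} in the diagonal case), and for the reverse inequality it uses the trivial isometric embedding $y\mapsto\Diag(y)$ of $\dom\th$ into $\dom g$ via \eqref{domcs}. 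The trade-off: your argument is shorter but invokes the full strength of \eqref{specmsqc}, whose harder direction (constructing a nearby matrix in $\dom g$ from a nearby point of $\dom\th$ for a \emph{general} $X$) requires a genuine rearrangement/spectral-decomposition argument in \cite{dlms}; the paper's proof avoids that entirely, since in the diagonal case the corresponding direction is immediate, so it rests only on elementary classical facts.
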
 
\begin{proof} For any  $x \in \R^n $, we know that  there exist   a permutation matrix $P \in \O^n$ such that $ \lambda (\Diag(x)) = Px$. 
Since $\th$ is a symmetric function, $\dom \th$ is a symmetric set. Thus for any $X \in \dom g$,  we have $P^\top \lambda (X) \in \dom \th.$ This, coupled  with \eqref{lipei}, leads us to
\begin{eqnarray*}
\| \Diag (x) - X\| \geq \| \lambda(\Diag(x)) - \lambda(X) \| &=& \| Px - \lambda (X)  \| \\\nonumber
&=& \| x - P^\top \lambda (X)  \| \geq \dist (x , \dom \th)
\end{eqnarray*}
for all $X\in \dom g$, which in turn   brings us to 
$$
\dist (x , \dom \th) \leq \dist \big(\Diag (x) ,\dom g\big).
$$
To prove the opposite inequality, pick any $y \in \dom \th.$  By \eqref{domcs}, we get $\Diag (y) \in \dom g$, which implies that  
\begin{equation*}
\| x - y \| = \| \Diag (x) - \Diag(y)  \| \geq \dist(\Diag (x) , \dom g).
\end{equation*}
Combining these clearly justifies \eqref{revset2}.
\end{proof}

Note that the identity in \eqref{specmsqc} allows us to show that second-order variational properties of a symmetric function $\th$ from \eqref{spec} are disseminated to the spectral function $g$. 
Appealing to \eqref{revset2}, we will show in the coming sections that those variational properties of the spectral function $g$ are inherited by the symmetric function $\th$ from \eqref{spec}.
This will be achieved using   the second-order variational  theory in \cite{ms} for the composite form \eqref{spec2}.
Note also that the results in \cite{ms} were proven under a constraint qualification, which is  similar to \eqref{mscq}. According to \eqref{revset2}, such a constraint qualification 
automatically holds for the composite form \eqref{spec2}. Moreover,  the  inner mapping $x \mapsto \Diag(x)$ in this composite form is   twice continuously differentiable, which 
allows us to exploit the results in \cite{mms1, ms}.  

\begin{Proposition} \label{chss}
Let $g:\S^n\to \oR$  be a spectral  function, represented by \eqref{spec}, and let the symmetric function $\th$, taken from \eqref{spec}, be locally Lipschitz continuous   relative to its domain.
Then for any $X\in \S^n$ with $g(X)$  finite and   any $v\in \R^n$, we have
\begin{equation}\label{dchain3}
\d \th(\lm( X )) (v) =  \d g \big(\Diag(\lambda(X)) \big)(\Diag (v)).
\end{equation}  
In particular, if $g=\dd_C$, where $C\subset \S^n$ is a spectral set, then  we get for any $X\in C$ that  
\begin{equation}\label{dchain4}
T_\Th(\lm( X )) = \big\{ v\in \R^n|\; \Diag (v)\in T_{C}(\Diag(\lm(X))\big\},
\end{equation}  
where $\Th$ is taken from \eqref{spectralset}.
\end{Proposition}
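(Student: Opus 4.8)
The plan is to read the identity \eqref{spec2} as the composition $\th=g\circ\Diag$, whose inner map $\Diag\colon\R^n\to\S^n$ is linear---hence twice continuously differentiable with $\nabla\Diag(x)v=\Diag(v)$---and then to apply the first-order (subderivative) chain rule for composite functions developed in \cite{mms1, ms}. That chain rule asserts that whenever the outer function is locally Lipschitz continuous relative to its domain and the metric subregularity constraint qualification \eqref{mscq} holds at the point in question, one has $\d(\psi\circ F)(\ox)(w)=\d\psi(F(\ox))(\nabla F(\ox)w)$. Applied with $\psi=g$, $F=\Diag$, and $\ox=\lm(X)$, this produces exactly \eqref{dchain3}, since $F(\lm(X))=\Diag(\lm(X))$ and $\nabla F(\lm(X))v=\Diag(v)$.

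To invoke that result I must check its two hypotheses for $\psi=g$ at $\ox=\lm(X)$. The constraint qualification is free of charge: because $\dom(g\circ\Diag)=\dom\th$ by \eqref{domcs}, the identity \eqref{revset2} reads $\dist(x,\dom(g\circ\Diag))=\dist(\Diag(x),\dom g)$, which is precisely \eqref{mscq} for $g\circ\Diag$ with $\kappa=1$. The Lipschitz hypothesis requires transferring local Lipschitz continuity relative to the domain from $\th$ to $g$: for $X,Y\in\dom g$ near $\Diag(\lm(X))$ we have $\lm(X),\lm(Y)\in\dom\th$ by \eqref{domcs}, whence $|g(X)-g(Y)|=|\th(\lm(X))-\th(\lm(Y))|\le\ell\,\|\lm(X)-\lm(Y)\|\le\ell\,\|X-Y\|$, the first inequality using that $\th$ is Lipschitz relative to $\dom\th$ and the second being the $1$-Lipschitz estimate \eqref{lipei}. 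Thus $g$ is locally Lipschitz relative to $\dom g$, and the chain rule applies.

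The mechanism behind this chain rule---and a self-contained alternative route to \eqref{dchain3}---is the first-order eigenvalue expansion of Proposition~\ref{olemma}, equations \eqref{fexpan}--\eqref{dirla}. Evaluated at $\Diag(\lm(X))$, where the identity matrix serves as the orthogonal factor, the directional derivative $\lm'(\Diag(\lm(X));\Diag(v))$ is obtained by sorting the entries of $v$ within each eigenvalue block $\al_m$; this is a permutation of $v$ that fixes $\lm(X)$. Since $\th$ is symmetric, $\d\th(\lm(X))(\cdot)$ is invariant under any permutation fixing $\lm(X)$, and combining this with the obvious inequality obtained by restricting the difference quotients of $g$ to diagonal test directions pins down both inequalities in \eqref{dchain3}. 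I expect the main obstacle to be exactly this interplay: showing that general symmetric test directions $W\to\Diag(v)$ for $g$ cannot yield a strictly smaller subderivative than the diagonal ones---which amounts to controlling the eigenvalue perturbation of $\Diag(\lm(X))$, a matrix with repeated eigenvalues where $\lm$ is only directionally differentiable, so that every such direction reduces, up to an invariant permutation, to a diagonal one for $\th$.

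Finally, \eqref{dchain4} follows by specializing \eqref{dchain3} to indicators. Taking $g=\dd_C$ with $C$ spectral forces $\th=\dd_\Th$ with $\Th$ the symmetric set from \eqref{spectralset}, and indicator functions are trivially locally Lipschitz relative to their domains, so \eqref{dchain3} is available. Using the standard identity $\d\dd_S(\oz)(\cdot)=\dd_{T_S(\oz)}(\cdot)$ for both $S=\Th$ and $S=C$, the chain rule \eqref{dchain3} becomes $\dd_{T_\Th(\lm(X))}(v)=\dd_{T_C(\Diag(\lm(X)))}(\Diag(v))$ for every $v\in\R^n$, which says precisely that $v\in T_\Th(\lm(X))$ if and only if $\Diag(v)\in T_C(\Diag(\lm(X)))$, i.e.\ \eqref{dchain4}.
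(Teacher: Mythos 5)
Your proposal is correct and follows essentially the same route as the paper's own proof: both read \eqref{spec2} as the composition $\th=g\circ\Diag$, transfer local Lipschitz continuity relative to the domain from $\th$ to $g$ via \eqref{lipei}, invoke the subderivative chain rule of \cite[Theorem~3.4]{mms1} with the constraint qualification supplied automatically by \eqref{revset2}, and then obtain \eqref{dchain4} by specializing to indicator functions and using $\d\dd_S=\dd_{T_S}$. Your third paragraph sketching an alternative "mechanism" via the eigenvalue expansion is not needed for the argument and is the only place where the reasoning is loose, but since the chain-rule proof is complete on its own, this does not affect correctness.
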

\begin{proof} It follows from  \eqref{spec} that the symmetric function $\th$ satisfies \eqref{spec2}, which means that 
$\th$ can be represented as a composite function of $g$ and the linear mapping $x\mapsto \Diag(x)$ with $x\in \R^n$. We also deduce from the imposed assumption on  $\th$ and the inequality in \eqref{lipei}
that $g$ is locally Lipschitz continuous relative to its domain. This, together with \eqref{revset2} and \cite[Theorem~3.4]{mms1}, justifies \eqref{dchain3}.
To justify \eqref{dchain4}, recall the representation $\Th$ from \eqref{setth}, which can be equivalently expressed as $\dd_\Th(x)=\dd_C(\Diag(x))$  for any $x\in \R^n$.
The claim equality in \eqref{dchain4} results from \eqref{dchain3} and  the fact that  $\d \th(\lm( X )) =\dd_{T_\Th(\lm( X )) }$ and $ \d g \big(\Diag(\lambda(X)) \big)=\dd_{T_C(\ss{\Diag}(\lm(X)))}$.
\end{proof}

\begin{Remark}[symmetric property of subderivatives]\label{sysub}{\rm If $\th:\R^n\to \oR$ is a symmetric function and $X\in \S^n$ with $\th(\lm(X))$ finite, one may wonder whether the subderivative $\d\th(\lm(X))$ is a symmetric function.
This can be easily disproven by taking $\th=\dd_{\R_-^n}$ and $X\in \S^n$ such that all its eigenvalues are not the same; see Example~\ref{tans} for more details.
While this may seem disappointing, we can show that $\d\th(\lm(X))$ is a symmetric function with respect to a subset of $\P^n$. Indeed, assume that $\P^n_X$ is the set of 
all $n\times n$ block diagonal matrices in the form $Q=\Diag(P_1,\ldots,P_r)$, where $P_m\in \R^{|\al_m|\times |\al_m|}$ is a  permutation matrix for any $m=1,\ldots,r$ with $\al_m$ taken from \eqref{index} and $r$
being the number of distinct eigenvalues of $X$. It is clear that $\P^n_X\subset \P^n$ and that  if $Q\in \P^n_X $, then we have $Q\lm(X)=\lm(X)$. Moreover, for any $v\in \R^n$ and $Q\in \P^n_X$, we get 
\begin{eqnarray*}
\d \th(\lm( X )) (v) &=& \liminf_{\substack{t\dn 0\\v'\to v}}\frac{\th \big(\lambda (X)+tv'\big)-\th \big(\lambda (X)\big)}{t}\\
&=& \liminf_{\substack{t\dn 0\\ v'\to v}}\frac{\th \big(\lambda (X)+tQv'\big)-\th \big(\lambda (X)\big)}{t}\\
&\ge & \liminf_{\substack{t\dn 0\\ w\to Qv}}\frac{\th \big(\lambda (X)+tw\big)-\th \big(\lambda (X)\big)}{t}=\d \th(\lm( X )) (Qv).
\end{eqnarray*}
Since $Q^{-1}=\Diag(P_1^{-1},\ldots,P_r^{-1})$, we can show similarly that $\d \th(\lm( X )) (v)\le \d \th(\lm( X )) (Qv)$ for any $v\in \R^n$ and $Q\in \P^n_X$, which leads us to 
$$
\d \th(\lm( X )) (v)= \d \th(\lm( X )) (Qv)\quad \mbox{for all}\;\; v\in \R^n, \; Q\in \P^n_X,
$$
demonstrating that $\d \th(\lm( X )) $ is a symmetric function with respect to $\P^n_X$.
}
\end{Remark}

We proceed by proving a chain rule for subderivatives of spectral functions. We begin with recalling a useful characterization of the subdifferential of the spectral functions.
\begin{Proposition} \label{subsp} Assume that  $\th:\R^n\to \oR$ is a proper,  lower semicontinuous {\rm(}lsc{\rm )}, convex, and  symmetric  function. Then the following properties are equivalent:
\begin{itemize}[noitemsep,topsep=0pt]
 \item [{\rm (a)}]  $Y\in \sub (\th\circ\lm)(X)$;
 \item [{\rm (b)}]  $\lm(Y)\in \sub \th(\lm(X))$ and the matrices $X$ and $Y$ have simultaneous ordered spectral decomposition, meaning that 
 there exists $U\in \O^n(X)\,\cap\, \O^n(Y)$ such that 
$$
X=U\Lm(X)U^\top\quad \mbox{and}\quad Y=U\Lm(Y)U^\top,
$$
where $\Lm(X)=\Diag(\lm(X)\big)$ and $\Lm(Y)=\Diag(\lm(Y)\big)$.
\end{itemize}
\end{Proposition}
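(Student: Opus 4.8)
The plan is to prove the two implications separately, with Fan's inequality \eqref{fani} and its sharp equality characterization (recorded right after \eqref{lipei}) serving as the main engine. Throughout I use that $g=\th\circ\lm$ is convex by Davis' theorem, so that $\sub g(X)$ is meant in the convex-analysis sense and is governed by the subgradient inequality $g(Z)\ge g(X)+\la Y,Z-X\ra$ for all $Z\in\S^n$.

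For the implication (b) $\Rightarrow$ (a), I would verify this subgradient inequality directly for every $Z\in\S^n$. Since $\lm(Y)\in\sub\th(\lm(X))$, convexity of $\th$ gives $\th(\lm(Z))\ge\th(\lm(X))+\la\lm(Y),\lm(Z)-\lm(X)\ra$. I then bound $\la\lm(Y),\lm(Z)\ra\ge\la Y,Z\ra$ using Fan's inequality \eqref{fani}, while the assumed simultaneous ordered decomposition of $X$ and $Y$ forces equality $\la\lm(Y),\lm(X)\ra=\la Y,X\ra$ in \eqref{fani}. Chaining these three facts yields $g(Z)=\th(\lm(Z))\ge\th(\lm(X))+\la Y,Z-X\ra=g(X)+\la Y,Z-X\ra$, which is exactly $Y\in\sub g(X)$.

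For the converse (a) $\Rightarrow$ (b), I would proceed in two steps. First, I establish the simultaneous ordered decomposition by exploiting the orthogonal invariance of $g$: testing the subgradient inequality at $Z=VXV^\top$ for an arbitrary $V\in\O^n$ and using $g(VXV^\top)=g(X)$ gives $\la Y,VXV^\top\ra\le\la Y,X\ra$ for all $V$. Choosing $V=U_YU_X^\top$, where $U_X$ and $U_Y$ are any ordered diagonalizers of $X$ and $Y$, makes the left-hand side equal to $\la\lm(Y),\lm(X)\ra$, while Fan's inequality bounds $\la Y,X\ra\le\la\lm(Y),\lm(X)\ra$ from above. Hence $\la Y,X\ra=\la\lm(Y),\lm(X)\ra$, and the equality characterization of \eqref{fani} produces a common $U\in\O^n(X)\cap\O^n(Y)$ with $X=U\Lm(X)U^\top$ and $Y=U\Lm(Y)U^\top$. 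Second, with this $U$ in hand, I verify $\lm(Y)\in\sub\th(\lm(X))$ by testing the subgradient inequality at $Z=U\Diag(z)U^\top$ for arbitrary $z\in\R^n$: symmetry of $\th$ gives $g(Z)=\th(z)$, while the congruence by $U$ yields $\la Y,Z\ra=\la\lm(Y),z\ra$ and $\la Y,X\ra=\la\lm(Y),\lm(X)\ra$. This reduces the subgradient inequality to $\th(z)\ge\th(\lm(X))+\la\lm(Y),z-\lm(X)\ra$ for all $z\in\R^n$, which is precisely $\lm(Y)\in\sub\th(\lm(X))$.

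The main obstacle is the first step of (a) $\Rightarrow$ (b), namely extracting the simultaneous ordered decomposition. The key idea is to reinterpret the subgradient inequality under orthogonal invariance as the variational statement that $X$ maximizes $V\mapsto\la Y,VXV^\top\ra$ over $\O^n$, and then to recognize, through the sharp equality characterization of Fan's inequality, that this maximality is equivalent to the existence of a common orthogonal diagonalizer. Once the common $U$ is secured, the remaining arguments are routine inner-product identities under the congruence $Z\mapsto U^\top Z U$.
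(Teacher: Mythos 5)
Your proof is correct, but it takes a genuinely different route from the paper. The paper disposes of this proposition in two lines by citation: it invokes \cite[Corollary~5.2.3]{bl} to get that $\th\circ\lm$ is lsc and convex precisely when $\th$ is, and then quotes \cite[Theorem~5.2.4]{bl} (Lewis's subdifferential formula for convex spectral functions) for the claimed equivalence. You instead reconstruct that theorem from scratch, using only tools already recorded in Section~2 of the paper: Fan's inequality \eqref{fani} and its equality case, which is equivalent to the equality case of \eqref{lipei} stated there (since $\|\lm(X)\|=\|X\|$, equality in \eqref{lipei} is the same as $\la X,Y\ra=\la\lm(X),\lm(Y)\ra$, i.e.\ simultaneous ordered spectral decomposition). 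Your chain for (b)$\Rightarrow$(a) is the standard one, and your key maneuver for (a)$\Rightarrow$(b) --- testing the subgradient inequality at $Z=VXV^\top$ to see that orthogonal invariance forces $\la Y,X\ra=\la\lm(Y),\lm(X)\ra$, then extracting the common diagonalizer from the equality case, then testing at $Z=U\Diag(z)U^\top$ to recover $\lm(Y)\in\sub\th(\lm(X))$ --- is essentially the argument in Lewis's original paper, which is what the cited Borwein--Lewis theorem packages. What the paper's approach buys is brevity and deference to the literature; what yours buys is a self-contained argument whose only external input is Davis's theorem (convexity of $g$), so a reader need not unwind the reference. One small bookkeeping point: where you invoke convexity of $g=\th\circ\lm$, the paper would also want lower semicontinuity and properness tracked (via \cite[Corollary~5.2.3]{bl}), though for the subgradient-inequality manipulations you perform, convexity and finiteness of $g(X)$ are all that is actually used.
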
 

\begin{proof} It follows from \cite[Corollary~5.2.3]{bl} that $\th\circ \lm$ is lsc and convex if and only if $\th$ is lsc and convex.
  The claimed equivalence then results from  \cite[Theorem~5.2.4]{bl}.
\end{proof}
 
 Given a matrix $X\in \S^n$ with $r$ distinct eigenvalues and the index sets $\al_m$, $m=1,\ldots,r$,  from \eqref{index}, recall that 
$\cup_{m=1}^r\al_m=\{1,\ldots,n\}$. In what follows,  we  partition a vector $p\in \R^n$ into  $(p_{\al_1},\ldots, p_{\al_r})$, where $p_{\al_m}\in \R^{|\al_m|}$ for any $m=1,\ldots,r$.
 
 \begin{Theorem}[subderivatives of spectral functions]\label{specsubd}
Let $\th:\R^n\to \oR$ be a symmetric function  and let $X\in \S^n$ with $(\th\circ \lm)(X)$   finite. If $\th$ is either   lsc and convex  with $\sub \th(\lm(X))\neq \emptyset$ or locally Lipschitz continuous around $\lambda(X)$ relative to its domain, then for all $H \in \S^n$ we have
\begin{equation}\label{dchain}
\d (\th\circ\lm)( X ) (H) =  \d \theta (\lambda(X)) (\lambda^{'} (X ; H)).
\end{equation}   
\end{Theorem}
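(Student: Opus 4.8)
The plan is to read off both sides of \eqref{dchain} from the definition of the subderivative and to treat $\th\circ\lm$ as a composition whose inner map $\lm$, though not differentiable, is \emph{semidifferentiable} at $X$ with semiderivative $H\mapsto\lambda'(X;H)$. Accordingly, the first step is to record this semidifferentiability: combining the global Lipschitz estimate \eqref{lipei} with the directional differentiability in \eqref{dirla} gives, for every $H\in\S^n$,
$$
\frac{\lm(X+tH')-\lm(X)}{t}\longrightarrow\lambda'(X;H)\quad\mbox{as}\ (t,H')\to(0^+,H),
$$
because $\|\lm(X+tH')-\lm(X+tH)\|/t\le\|H'-H\|$ by \eqref{lipei} controls the perturbation of the direction, while $t\mapsto(\lm(X+tH)-\lm(X))/t$ converges to $\lambda'(X;H)$ by \eqref{dirla}. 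I would then prove the two inequalities in \eqref{dchain} separately.

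For the inequality ``$\ge$'', I take arbitrary sequences $t_k\dn0$ and $H_k\to H$, set $w_k:=(\lm(X+t_kH_k)-\lm(X))/t_k$, and note $w_k\to\lambda'(X;H)$ by the first step, so that
$$
\frac{\th(\lm(X+t_kH_k))-\th(\lm(X))}{t_k}=\frac{\th(\lm(X)+t_kw_k)-\th(\lm(X))}{t_k}.
$$
Since the pairs $(t_k,w_k)$ are admissible in the liminf defining $\d\th(\lm(X))(\lambda'(X;H))$, taking the liminf yields $\d(\th\circ\lm)(X)(H)\ge\d\th(\lm(X))(\lambda'(X;H))$; this direction needs neither convexity nor Lipschitz continuity.

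For the reverse inequality ``$\le$'', I fix a sequence $(t_k,v_k)$ with $t_k\dn0$ and $v_k\to\lambda'(X;H)$ realizing the liminf defining $\d\th(\lm(X))(\lambda'(X;H))$ (which may be assumed finite, as otherwise there is nothing to prove), and I \emph{realize} each perturbed spectrum $\lm(X)+t_kv_k$ as $\lm(X+t_kH_k)$ for suitable $H_k\to H$. Two reductions enable this. First, since $\d\th(\lm(X))$ is symmetric with respect to $\P^n_X$ by Remark~\ref{sysub} and every $Q\in\P^n_X$ fixes $\lm(X)$, I may replace $v_k$ by $Q_kv_k$ so that each block $(v_k)_{\al_m}$ becomes nonincreasing without altering the difference quotient; passing to a subsequence keeps $Q_k$ constant, retains $v_k\to\lambda'(X;H)$, and keeps $\lm(X)+t_kv_k\in\dom\th$ since $\dom\th$ is symmetric. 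With $v_k$ block-sorted, $\lm(X)+t_kv_k$ is a genuine nonincreasing eigenvalue vector for large $k$. Second, writing $X=U\Diag(\lm(X))U^\top$ with $U\in\O^n(X)$ as in \eqref{specdocom} and diagonalizing each block as $U_{\al_m}^\top HU_{\al_m}=P_m\Diag\big(\lambda(U_{\al_m}^\top HU_{\al_m})\big)P_m^\top$, I form $V:=U\Diag(P_1,\ldots,P_r)\in\O^n(X)$ and conjugate
$$
Z_k:=V\exp(t_kK)\,\Diag(\lm(X)+t_kv_k)\,\exp(-t_kK)\,V^\top,
$$
where $K$ is the fixed skew-symmetric matrix with zero diagonal blocks whose off-diagonal blocks solve $[K,\Diag(\lm(X))]_{\al_m\al_{m'}}=(V^\top HV)_{\al_m\al_{m'}}$ (uniquely, since $\mu_m\ne\mu_{m'}$ for $m\ne m'$). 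A first-order expansion of $\exp(\pm t_kK)$ shows that $Z_k$ has eigenvalues exactly $\lm(X)+t_kv_k$ and that $H_k:=(Z_k-X)/t_k\to H$, the diagonal blocks of the limit matching $V^\top HV$ because of the within-block diagonalization and the off-diagonal blocks matching by the choice of $K$. Consequently the difference quotient for $\th\circ\lm$ along $(t_k,H_k)$ coincides with the one for $\th$ along $(t_k,v_k)$, and passing to the liminf gives $\d(\th\circ\lm)(X)(H)\le\d\th(\lm(X))(\lambda'(X;H))$.

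I expect the realization in the last paragraph to be the main obstacle: constructing $H_k\to H$ whose perturbed spectrum is \emph{exactly} the prescribed block-sorted vector $\lm(X)+t_kv_k$. The block structure of $\lambda'(X;\cdot)$ together with the first-order expansion \eqref{fexpan} dictates the diagonal-block versus off-block split of the conjugation, and the symmetry reduction of Remark~\ref{sysub} is what legitimizes sorting the blocks of $v_k$. The hypotheses on $\th$ (lsc convex with $\sub\th(\lm(X))\ne\emptyset$, or Lipschitz continuous relative to $\dom\th$) serve to ensure that the liminf defining $\d\th(\lm(X))(\lambda'(X;H))$ is controlled by sequences remaining in $\dom\th$, so that the automatically valid qualification \eqref{specmsqc} permits invoking the composite subderivative calculus of \cite{mms1, ms}; in the present structural setting they supply exactly the properties the construction above relies on.
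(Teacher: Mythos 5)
Your proposal is correct, and while the ``$\ge$'' half coincides with the paper's argument (both rest on \eqref{fexpan}--\eqref{dirla} plus the Lipschitz estimate \eqref{lipei}), your treatment of the harder ``$\le$'' half takes a genuinely different route. The paper splits into two cases according to the hypothesis on $\th$: in the convex case it writes $\d(\th\circ\lm)(X)(H)$ as the support function of $\sub(\th\circ\lm)(X)$ via \cite[Theorem~8.30]{rw}, then uses the simultaneous-diagonalization description of that subdifferential (Proposition~\ref{subsp}) together with Fan's inequality \eqref{fani}; in the relative-Lipschitz case it takes a realizing sequence $\lm(X)+t_kv_k$ for $\d\th(\lm(X))(\lm'(X;H))$, uses the metric identity \eqref{specmsqc} to produce $E_k\to H$ with $X+t_kE_k\in\dom g$ whose spectra are only \emph{close} to the prescribed vectors, and bridges the resulting gap in function values with the Lipschitz constant of $\th$ relative to its domain. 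You instead \emph{realize} the prescribed spectra exactly: after block-sorting $v_k$ (legitimate by the $\P^n_X$-symmetry of Remark~\ref{sysub}, and harmless since sorting is continuous and fixes the already block-sorted vector $\lm'(X;H)$), the conjugation $Z_k=V\exp(t_kK)\,\Diag(\lm(X)+t_kv_k)\,\exp(-t_kK)V^\top$ with your skew-symmetric $K$ satisfies $\lm(Z_k)=\lm(X)+t_kv_k$ for large $k$ and $H_k:=(Z_k-X)/t_k\to H$. I checked the block bookkeeping: the diagonal blocks of the limit of $H_k$ are $\Diag\big(\lm(U_{\al_m}^\top HU_{\al_m})\big)=V_{\al_m}^\top HV_{\al_m}$ thanks to the within-block diagonalization, the off-diagonal blocks are matched by the commutator equation, and that equation is uniquely solvable with $K_{\al_m\al_{m'}}=(V^\top HV)_{\al_m\al_{m'}}/(\mu_{m'}-\mu_m)$, which is skew-symmetric precisely because $V^\top HV$ is symmetric. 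What each approach buys: the paper's proof recycles machinery (convex duality, the identity \eqref{specmsqc}, relative Lipschitz continuity) that reappears verbatim in its second-order results, whereas your construction is self-contained, more elementary, and---notably---never uses either hypothesis on $\th$, so it in fact establishes \eqref{dchain} for \emph{every} symmetric $\th$ finite at $\lm(X)$, a strictly stronger statement.

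Two minor expository flaws, neither affecting correctness. First, your closing paragraph asserts that the hypotheses on $\th$ ``supply exactly the properties the construction above relies on'' and appeals to the composite calculus of \cite{mms1,ms}; neither claim is true of your own argument, which nowhere invokes convexity, $\sub\th(\lm(X))\neq\emptyset$, relative Lipschitz continuity, or \eqref{specmsqc}. You should either say plainly that your proof dispenses with these hypotheses or identify a genuine point where they enter (there is none). Second, the identity $\lm(Z_k)=\lm(X)+t_kv_k$ follows from orthogonal similarity plus the fact that the block-sorted vector is globally nonincreasing for small $t_k$, not from the first-order expansion of $\exp(\pm t_kK)$; the expansion is what yields $H_k\to H$.
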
  
\begin{proof}
Pick any $ H \in\S^n $ and deduce from Proposition~\ref{olemma} that $\lambda' (X ; .)$ is a Lipschitz continuous and positively homogeneous function. Moreover, we have $\lambda' (X ; E)+ {O(t^2 \|E\|^2 )}/{t}\to\lambda' (X ; H)$ as $t\dn 0$ and $E \to H$.  
This and the definition of subderivative give us  the relationships
\begin{eqnarray}
\disp\d(\th \circ \lambda)(X)(H)&=&\liminf_{\substack{t\dn 0\\E\to H}}\frac{\th \big(\lambda (X+t E)\big)-\th \big(\lambda (X)\big)}{t}\nonumber\\
&=&\liminf_{\substack{t\dn 0\\ E\to H}}\frac{\th \big( \lambda (X)+t \lambda'(X ; E) +O(t^2 \|E\|^2 )\big)-\th \big(\lambda (X) \big)}{t}\nonumber\\
&=&\liminf_{\substack{t\dn 0\\ E \to H}}\frac{\th \big(\lambda (X)+t[\lambda'(X ; E)+ {O(t^2 \|E\|^2 )}/{t}]\big)-\th \big(\lambda (X) \big)}{t}\nonumber\\
&\ge&\d\th \big(\lambda (X)\big)\big(\lambda'(X ; H)\big)\label{blm},
\disp
\end{eqnarray}
which verify the inequality ``$\ge$" in \eqref{dchain}. To justify  the opposite inequality in \eqref{blm}, observe that if $\d \th (\lambda (X))(\lambda'(X ; H)) =\infty$, 
the latter inequality clearly holds. Thus,  assume that   $\d \th (\lambda (X))(\lambda'(X ; H))<\infty$. If $\th$ is lsc and convex,   $\th\circ \lm$ is lsc and convex due to \cite[Corollary~5.2.3]{bl}.
Moreover, it follows from \cite[Theorem~6]{l99} and $\sub \th(\lm(X))\neq \emptyset$  that $\sub (\th \circ \lambda)(X)\neq \emptyset$. 
 Thus, it follows  from \cite[Theorem~8.30]{rw}   
that $\d (\th \circ \lambda)(X) (H) = \sup_{Y \in \sub (\th \circ \lambda)(X) }   \la Y, H \ra $.  Let $\varepsilon > 0$    and  choose $Y \in \sub (\th \circ \lambda)(X)$ such that 
$$    
 \d (\th \circ \lambda)(X) (H) \leq  \ve  + \la Y, H \ra .  
$$
Since $Y \in \sub (\th \circ \lambda)(X)$, it follows from Proposition~\ref{subsp} that there is $U\in \O^n(X)\,\cap\, \O^n(Y)$ such that $\lm(Y)\in \sub \th(\lm(X))$.  
Set   $\Lm(Y):=U^\top YU$ and use Fan's inequality to conclude  
\begin{eqnarray*}
	\d (\th \circ \lambda)(X) (H) \leq  \ve  + \la Y, H \ra  =   \ve  + \la \Lm(Y), U^T H U \ra  &= &\ve  +  \sum_{m=1}^{r} \la \Lm(Y)_{\al_m\al_m}  ,  U_{\al_m}^\top  H U_{\al_m} \ra\\
	&\leq&  \ve  +  \sum_{m=1}^{r} \la \lambda (Y)_{\al_m}, \lambda ( U_{\al_m}^\top H U_{\al_m}) \ra    \\
	&=&   \ve  +  \la \lambda (Y )  ,  \lambda^{'} (X; H) \ra   \\
	&\leq&  \ve  +  \d \th ( \lambda(X)) (\lambda^{'} (X; H) ),
\end{eqnarray*}
where the last inequality results from the fact that $\th$ is convex and  $\lm(Y)\in \sub \th(\lm(X))$. 
Letting $\varepsilon \dn 0$, we get the opposite inequality in \eqref{blm}, which proves \eqref{dchain} in this case.
Suppose now that $\th$ is  locally Lipschitz continuous around $\lambda(X)$ relative to its domain.
To prove the opposite inequality in \eqref{blm}, by definition,  there exist sequences $t_k\dn 0$ and $v_k\to \lambda'(X ; H)$ such that
\begin{equation}\label{subd0}
\d \th (\lambda (X))(\lambda'(X ; H)) =\lim_{\substack{k\to\infty}}\frac{\th \big(\lambda(X)+t_k v_k\big)-\th \big(\lambda (X) \big)}{t_k}.
\end{equation}
Since  $\d \th (\lambda (X))(\lambda'(X ; H))<\infty$, we can assume without loss of generality  that    $\lambda (X) +t_k v_k \in\dom \th $ for all $k\in\N$.
Take the function $g$ from \eqref{spec} and appeal to   \eqref{specmsqc} to get    
\begin{equation*}
{\rm dist}(X + t_k H, \dom g ) =\dist \big(\lambda (X + t_k H),\dom\th \big),\quad k\in\N,
\end{equation*}
which in turn brings us to the relationships
\begin{eqnarray*}
{\rm dist}\Big(H, \frac{\dom g - X}{t_k}\Big)&= &\frac{1}{t_k}\, \dist \big( \lambda (X) + t_k  \lambda' (X, H) +O(t^{2}_k), \dom\th \big)\nonumber \\
&\le &\frac{1}{t_k}\,\big \| \lambda (X) +t_k \lambda' (X,H) +O(t^{2}_k) - \lambda (X) -t_k  v_k\big\|\nonumber\\
&=& \big\| \lambda' (X ; H) -v_k+\frac{O(t^{2}_k)}{t_k}\big\|\;\mbox{ for all }\;k\in\N.
\disp
\end{eqnarray*}
So for each $k\in \N$, we find a matrix $E_k\in \S^n$ such that   $X+t_kE_k\in \dom g $ and 
$$
\|H - E_k\| < \big\| \lambda' (X; H) - v_k+\frac{O(t^{2}_k)}{t_k} \big\|+\frac{1}{k},
$$
which in turn yields    $E_k \to H$ as $k\to\infty$. Combining these with \eqref{subd0} and \eqref{fexpan}, we arrive at
\begin{eqnarray*}
\d \th \big( \lambda (X ) \big)\big(\lambda' (X ; H)\big)&=& \disp\lim_{k\to\infty}\Big[\frac{ g(X + t_k E_k )-g (X)}{t_k}+\frac{\th \big( \lambda (X) +t_k v_k\big)-\th \big(\lambda (X +t_k E_k)\big)}{t_k}\Big]\\
&\ge & \disp\liminf_{k\to\infty}\frac{g  (X  +t_k E_k)-g(X)}{t_k}-\kappa \lim_{k\to\infty}\big\|\frac{\lambda (X + t_k E_k)- \lambda (X)}{t_k}-v_k\big\|\\
&\ge&\d g(X)(H)-\kappa \lim_{k\to\infty}\big\| \lambda' (X ; E_k )+\frac{O(t^{2}_k)}{t_k}-v_k\big\|=\d g (X)(H),
\end{eqnarray*}
where $\kappa\ge 0$ is a Lipschitz constant of $\th$ around $\lambda(X)$ relative to its domain. This verifies the inequality ``$\le$" in \eqref{dchain} and completes the proof of the theorem.
\end{proof}

As an immediate conclusion of Theorem~\ref{specsubd}, we obtain a simple representation of tangent cones to spectral sets.
\begin{Corollary}[tangent cone to the spectral sets]\label{tansp} Let $C$ be a   spectral set represented by \eqref{spectralset}. Then for any  $X \in C$, we have 
\begin{equation*}\label{tspec}
T_C(X) = \big\{ H \in \S^n \big|\; \lambda' (X;H) \in T_{\Th} (\lambda(X))  \big\}.
\end{equation*}
\end{Corollary}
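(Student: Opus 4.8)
The plan is to obtain Corollary~\ref{tansp} as a direct specialization of the subderivative chain rule in Theorem~\ref{specsubd} to indicator functions. First I would put $g=\dd_C$ and $\th=\dd_\Th$, where $\Th$ is the symmetric set from \eqref{spectralset}. Because $C=\{X\in\S^n\,|\,\lambda(X)\in\Th\}$, the composite identity $\dd_C=\dd_\Th\circ\lm$ holds verbatim, so $g$ is a spectral function whose associated symmetric function is exactly $\th=\dd_\Th$, and the hypothesis $X\in C$ forces $\lambda(X)\in\Th$, hence $(\th\circ\lm)(X)=\dd_C(X)=0$ is finite.

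Next I would verify that the Lipschitz branch of Theorem~\ref{specsubd} applies. As recalled in Section~\ref{sect02}, the indicator function of a nonempty set is locally Lipschitz continuous relative to its domain; thus $\dd_\Th$ is locally Lipschitz continuous around $\lambda(X)$ relative to $\dom\dd_\Th=\Th$. Theorem~\ref{specsubd} then yields, for every $H\in\S^n$,
\[
\d\dd_C(X)(H)=\d\dd_\Th(\lambda(X))\big(\lambda'(X;H)\big).
\]
To convert this into a tangent-cone statement I would invoke the standard identity $\d\dd_D(\ox)=\dd_{T_D(\ox)}$, valid for any set $D$ and any $\ox\in D$ (the same identity already used in the proof of Proposition~\ref{chss}). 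Applying it with $(D,\ox)=(C,X)$ on the left and with $(D,\ox)=(\Th,\lambda(X))$ on the right turns the displayed equality into
\[
\dd_{T_C(X)}(H)=\dd_{T_\Th(\lambda(X))}\big(\lambda'(X;H)\big),\qquad H\in\S^n.
\]
Reading off the two sides, $H\in T_C(X)$ if and only if $\lambda'(X;H)\in T_\Th(\lambda(X))$, which is precisely the asserted description of $T_C(X)$.

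Since essentially all of the content is carried by Theorem~\ref{specsubd}, there is no genuinely hard step; the only points requiring care are the two bookkeeping facts used above, namely that an indicator function is locally Lipschitz relative to its domain (so that the correct branch of Theorem~\ref{specsubd} is in force and $\sub\th$ need not be consulted) and that the subderivative of an indicator function coincides with the indicator of the corresponding tangent cone. Both are recorded earlier in the paper, so the argument reduces to assembling them.
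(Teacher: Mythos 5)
Your proposal is correct and follows essentially the same route as the paper's own proof: apply Theorem~\ref{specsubd} to the symmetric indicator function $\dd_\Th$ (using the fact, recorded in Section~\ref{sect02}, that indicator functions are locally Lipschitz continuous relative to their domains), and then translate subderivatives of indicators into indicators of tangent cones via $\d\dd_D(\ox)=\dd_{T_D(\ox)}$. The only difference is that you spell out the bookkeeping (the composite identity $\dd_C=\dd_\Th\circ\lm$ and the choice of the Lipschitz branch) which the paper leaves implicit.
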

\begin{proof} Taking the symmetric set $\Th$ from \eqref{spectralset}, we can apply Theorem~\ref{specsubd} for the symmetric function $\dd_\Th$.
The claimed representation of the tangent cone to $C$ at $X$ follows from the facts that  $\d \dd_\Th(X)=\dd_{T_C(X)}$ and $ \d \dd_\Th (\lambda(X))=\dd_{T_{\Th} (\lambda(X))}$.
\end{proof}

\begin{Example}[tangent cone to $\S^n_-$]\label{tans}{\rm Suppose that $\S^n_-$ stands for the cone of all $n\times n$ symmetric and negative semidefinite matrices. This cone is a spectral set and  
 $$
\S^n_-=\big\{X\in \S^n\big|\; \lm(X)\in \R^n_-\big\}.
$$
Take $X\in \S^n_-$ and assume that  $ \mu_{1} >  \cdots>  \mu_{r} $ are its distinct eigenvalues. If $ \mu_1<0$, then
we clearly have $\lm(X)\in \inte \R^n_-$ and hence $T_{ \R^n_-}(\lm(X))=\R^n$. Using this, together with Corollary~\ref{tansp}, we get 
$T_{\S^n_-}(X)=\S^n$. If $  \mu_1=0$, then we obtain $T_{ \R^n_-}(\lm(X))=\R^{|\al_1|}_-\times \R^{n-|\al_1|}$, where $\al_1$
is defined by \eqref{index}. Appealing to Corollary~\ref{tansp} tells us that 
\begin{equation}\label{1ts}
T_{\S^n_-}(X)=\big\{H\in \S^n\big|\; \lm_1(U_{\al_1}^\top HU_{\al_1})\le 0\big\},
\end{equation}
where $U$ is taken from \eqref{specdocom}. 
}
\end{Example}
The tangent cone description for the set $\S^n_-$ in \eqref{1ts} can be alternatively obtained by \cite[Proposition~2.61]{bs}. Indeed, 
it is easy to see that $\S^n_- = \{ X \in \S^n \big|\; \lambda_{1}(X) \leq 0  \}$ in which $\lambda_1$  is known to be a  convex function (cf. \cite[Exercise~2.54]{rw}). Obviously, we can find $ {X} \in \S^n$ 
with  $\lambda_1 ( {X}) < 0$, a condition  known as the {\em Slater} condition and assumed in \cite[Proposition~2.61]{bs}. In contrast,  our approach relies upon  the   metric subregularity, automatically satisfied for  spectral sets. 
This allows to calculate the tangent cone of spectral sets even if the Slater condition  fails therein as  the following example demonstrates. 
\begin{Example}[failure of the Slater condition in spectral sets]\label{fs}{\rm Assume that    $k \in \N$ and consider the set
\begin{equation}\label{fs1}
C =\big\{ X \in \S^n_+ \big|\; \sum^{n}_{i=1} \lambda^{k}_{i} (X) = 1 \big\}.
\end{equation} 
This is clearly a  spectral subset of $\S^n$. When $k = 1$, the  set $C$  is called { \em spectahedron}.  
Note that $C$ can be represented in the form of  \eqref{spectralset} with the symmetric set $\Th$ defined by 
 \begin{equation}\label{sym}
 \Theta  =\big\{ (z_1,\ldots,z_n) \in \R^n \big|\; \sum^{n}_{i=1} z^{k}_{i}  = 1, \; z_i \geq 0  \;\;\mbox{for all}\;\; i\in\{1,\ldots,n\} \big\}.
 \end{equation} 
 Set $\Phi(z)=(\sum^{n}_{i=1} z^{k}_{i}-1,z_1,\ldots,z_n)$ with $z=(z_1,\ldots,z_n)$ and $D=\{0\}\times\R_+^n$ and observe that 
 \begin{equation}\label{cons}
 \Th=\{z\in \R^n|\; \Phi(z)\in D\}.
 \end{equation} 
 We claim now that 
 \begin{equation}\label{bcq}
 N_D(\Phi(z))\cap \ker\nabla\Phi(z)^*=\{0\}
 \end{equation}
  for any $z\in \Th$. To justify it, pick $z=(z_1,\ldots,z_n)\in \Th$ and assume that $(b_0,\ldots,b_n)\in N_D(\Phi(z))\cap \ker\nabla\Phi(z)^*$.
 This implies that $b_iz_i=0$ and $kz_i^{k-1}b_0+b_i=0$ for any $i=1,\ldots,n$. It is not hard to see that these conditions lead us to $b_i=0$ for any $i=0,\ldots,n$, which proves our claim. 
 Take $X \in C$ and define the active index set $I(\lm(X)) =\big\{i \in \{1,\ldots,n\} |\; \lambda_i (X)=0 \big\}$.  
 It follows from  \cite[Theorem~6.14]{rw} and \eqref{bcq} that the tangent cone to $\Th$ at $\lm(X)$ can be calculated as  
 $$
 T_{\Theta} (\lambda(X)) =\big\{ (w_1,\ldots,w_n) \in \R^n \big|\; \sum^{n}_{i=1} w_i \lambda^{k-1}_{i} (X) = 0, \; w_i \geq 0 \;\;\mbox{for all}\;\;i \in I(\lm(X)) \big\}.
 $$
Appealing to Corollary~\ref{tansp} tells us that 
\begin{equation}\label{fs2}
T_{C}(X)=\big\{H\in \S^n\big|\; \sum^{n}_{i=1} \lm'_{ i}(X; H) \lambda^{k-1}_{i} (X) = 0, \; \ \lm'_{ i}(X; H) \geq 0 \;\;\mbox{for all}\;\;i \in I(\lm(X)) \big\}.
\end{equation}
Note that due to the presence of the equality constraint, the Slater condition fails for  $C$ and  thus \cite[Proposition~2.61]{bs} can't be applied. 
}
\end{Example}

\section{ Parabolic Epi-Differentiability  of Spectral Functions}\sce \label{sect04}

The main objective of this section is to provide a systematic study of two important second-order variational properties of spectral sets and functions: 1) parabolic derivability; and 2) a chain rule for parabolic subderivatives.
To achieve these goals, we begin with justifying that certain second-order  approximations of  spectral sets enjoy an outer Lipschitzian property, which is central to our developments in this section.  
Suppose that $C\subset \S^n$ is a spectral set with the representation in \eqref{spectralset} and that $X\in C$ and $H\in T_C(X)$. Define the set-valued mapping $S_{H} : \R^n \tto \S^n$ via the second-order tangent set 
to the symmetric set  $\Th$ in \eqref{spectralset} by
\begin{equation}\label{mapt}
S_H (p):=\big\{W \in \S^n \big|\;\lambda^{''} (X;H,W) + p \in T^2_{\Theta}  \big(\lambda(X), \lambda' (X ; H) \big)\big\}.
\end{equation}
For any parameter $p\in \R^n$, the set-valued mapping $S_H(p)$  presents a  second-order tangential approximation of the spectral set \eqref{spectralset} at $X$ for $H$. 
Note that by Corollary~\ref{tansp}, the condition $H\in T_C(X)$ amounts to $\lambda' (X;H) \in T_{\Th} (\lambda(X))$, which is required in the 
definition of the second-order tangent set to $\Th$ at $\lm(X)$ in \eqref{mapt}; see \eqref{2tan}. Note also that reducing  the estimate \eqref{specmsqc}, 
which was stated for the spectral function in \eqref{spec}, to the spectral set $C$ in \eqref{spectralset} gives us the estimate 
\begin{equation} \label{mscq2}
\dist ( X, C) = \dist ( \lambda(X),\Th )\quad \mbox{for all}\;\; X\in \S^n,
\end{equation}
which will be utilized broadly in this section.

\begin{Proposition}[uniform outer Lipschitzian property of $S_H$]\label{fors}Assume that $C\subset \S^n$ is a spectral set with the representation \eqref{spectralset} and that $X\in C$ and $H\in T_C(X)$.
Then the   mapping $S_H$ in \eqref{mapt} enjoys  the following uniform outer Lipschitzian property  at the origin:
\begin{equation}\label{tlip}
S_H (p)\subset S_H (0)+ \|p\| \B\;\mbox{ for all }\;p\in\R^n.
\end{equation}
 \end{Proposition}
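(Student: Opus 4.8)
The plan is to exploit the block structure encoded in the parabolic second-order derivative formula \eqref{paralam} together with the permutation symmetry of the second-order tangent set $T^2_\Theta\big(\lambda(X),\lambda'(X;H)\big)$. For $m\in\{1,\ldots,r\}$ and $j\in\{1,\ldots,\rho_m\}$ set $P_{mj}:=U_{\al_m}R_{mj}$. A short computation using that $U$ and each $Q_m$ are orthogonal shows that every $P_{mj}$ has orthonormal columns, that $P_{mj}^\top P_{m'j'}=0$ whenever $(m,j)\neq(m',j')$, and that the columns of all the $P_{mj}$ together form an orthonormal basis of $\R^n$. Formula \eqref{paralam} then says that the block of $\lambda''(X;H,W)$ carried by the coordinates $i$ with $i\in\al_m$ and $\l_i(X)\in\beta^m_j$ is exactly the vector of eigenvalues, in nonincreasing order, of
$$
M_{mj}(W):=P_{mj}^\top\big(W+2H(\mu_mI-X)^\dagger H\big)P_{mj},
$$
whose dependence on $W$ is only through the affine term $P_{mj}^\top W P_{mj}$.

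First I would record the relevant symmetry. Let $G$ be the group of coordinate permutations of $\R^n$ that permute only within each of these $(m,j)$ blocks; by \eqref{index} and \eqref{index2} these are precisely the index blocks on which both $\lambda(X)$ and $\lambda'(X;H)$ are constant. Consequently every $Q\in G$ fixes $\lambda(X)$ and $\lambda'(X;H)$, and applying $Q$ to the sequences defining the second-order tangent set in \eqref{2tan} and using the symmetry of $\Theta$ shows that $T^2_\Theta\big(\lambda(X),\lambda'(X;H)\big)$ is invariant under $G$.

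The heart of the argument is to produce, for a given $W\in S_H(p)$, a matrix $W'\in S_H(0)$ with $\|W-W'\|\le\|p\|$. Diagonalize each block as $M_{mj}(W)=V_{mj}\Diag(a_{mj})V_{mj}^\top$ with $V_{mj}\in\O^{|\beta^m_j|}$ and $a_{mj}$ the nonincreasingly ordered eigenvalue vector of $M_{mj}(W)$, which is the $(m,j)$ block of $\lambda''(X;H,W)$; let $p_{mj}$ be the corresponding block of $p$. Define
$$
W':=W+\sum_{m=1}^{r}\sum_{j=1}^{\rho_m}P_{mj}V_{mj}\Diag(p_{mj})V_{mj}^\top P_{mj}^\top.
$$
Because the ranges of distinct $P_{mj}$ are mutually orthogonal, $M_{mj}(W')=V_{mj}\Diag(a_{mj}+p_{mj})V_{mj}^\top$, so the eigenvalues of $M_{mj}(W')$ are the entries of $a_{mj}+p_{mj}$; hence the $(m,j)$ block of $\lambda''(X;H,W')$ is the nonincreasing rearrangement of the $(m,j)$ block of $\lambda''(X;H,W)+p$. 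Thus $\lambda''(X;H,W')=Q\big(\lambda''(X;H,W)+p\big)$ for some $Q\in G$, and since $W\in S_H(p)$ gives $\lambda''(X;H,W)+p\in T^2_\Theta\big(\lambda(X),\lambda'(X;H)\big)$, the $G$-invariance of the previous paragraph yields $\lambda''(X;H,W')\in T^2_\Theta\big(\lambda(X),\lambda'(X;H)\big)$, i.e. $W'\in S_H(0)$. Finally, the summands above have mutually orthogonal ranges and the Frobenius norm is invariant under the orthogonal maps $V_{mj}$ and $P_{mj}$, so $\|W-W'\|^2=\sum_{m,j}\|p_{mj}\|^2=\|p\|^2$; writing $W=W'+(W-W')$ then gives $W\in S_H(0)+\|p\|\B$, which is \eqref{tlip}.

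I expect the main obstacle to be the resorting phenomenon in the third step: the block-diagonal perturbation that exactly controls the Frobenius norm produces, in the eigenbasis $V_{mj}$, the shifted list $a_{mj}+p_{mj}$, which need not be ordered, so $\lambda''(X;H,W')$ coincides with $\lambda''(X;H,W)+p$ only up to a within-block permutation. The symmetry step is included precisely to absorb this permutation, and the remaining delicate point is the bookkeeping of the two nested layers of blocks—the $\al_m$ refined by the $\beta^m_j$—and checking that they match the coordinate blocks on which $\lambda(X)$ and $\lambda'(X;H)$ are simultaneously constant.
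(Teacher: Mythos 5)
Your proof is correct, but it follows a genuinely different route from the paper's. The paper argues metrically: given $W\in S_H(p)$, it takes the defining sequence $t_k\dn 0$ for the second-order tangent set, converts it to the matrix side via the expansion \eqref{secondexp2}, and then invokes the exact distance identity \eqref{mscq2} to find matrices $Y_k\in C$ within $\sm t_k^2\|p\|+o(t_k^2)$ of the parabolic arc; extracting a limit $D$ of the correction terms $D_k$ (with $\|D\|\le\sm\|p\|$) and reading the inclusion $Y_k\in C$ back through \eqref{secondexp2} shows $W-2D\in S_H(0)$. Your argument never touches \eqref{mscq2} or \eqref{secondexp2}: it works entirely at the level of the algebraic formula \eqref{paralam}, constructing the recession matrix $W'$ explicitly by adding mutually orthogonal block perturbations $P_{mj}V_{mj}\Diag(p_{mj})V_{mj}^\top P_{mj}^\top$, and absorbing the resulting within-block reordering of $a_{mj}+p_{mj}$ by the permutation invariance of $T^2_\Theta\big(\lambda(X),\lambda'(X;H)\big)$ under what the paper later calls $\P^n_{X,H}$ (your group $G$; compare \eqref{psym} and Proposition~\ref{psymm}, which you essentially re-derive). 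Each approach has its merits: the paper's is the more portable template --- it only needs a distance identity plus a parabolic expansion, which is exactly how the analogous result is proved in the twice-differentiable setting of \cite{mms2}, and it requires no fine bookkeeping of the eigenvalue blocks; yours is constructive and sharper in that it produces $W'\in S_H(0)$ with $\|W-W'\|=\|p\|$ exactly, avoids the compactness/subsequence step, and makes visible the role that the nested block structure $(\al_m,\beta^m_j)$ and the symmetry of $\Theta$ play --- machinery the paper only deploys later, in Propositions~\ref{psymm} and~\ref{imp}, for the parabolic regularity analysis. Your identification of the $(m,j)$ blocks of $\lambda''(X;H,\cdot)$ with the ordered spectra of $M_{mj}(\cdot)$, and the mutual orthogonality of the frames $P_{mj}=U_{\al_m}R_{mj}$, are both sound, so no gap remains.
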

\begin{proof} Let  $p\in \R^n$ and   pick then $ W \in S_H(p)$. It follows from  \eqref{mapt} that
$
\lambda^{''} (X;H,W) + p \in T^2_{\Theta}  \big(\lambda(X), \lambda' (X ; H) \big)
$.
By  \eqref{2tan}, there exists a sequence $t_k\dn 0$ such that 
\begin{equation*}
\lambda (X)+t_k\lambda' (X ; H)  +\sm t_k^2\lambda^{''} (X;H,W) + \sm t_k^2 p +o(t^2_k)\in\Th\quad \mbox{for all}\;\;k\in\N.
\end{equation*}
For any $k$ sufficiently large, we conclude from    \eqref{secondexp2} that
\begin{equation*}
 \lambda(X+t_kH + \sm t_k^2 W) =  \lambda (X)+t_k\lambda' (X ; H)  +\sm t_k^2\lambda^{''} (X;H,W) +  o(t^2_k),
\end{equation*}
which in turn implies via \eqref{mscq2} that
\begin{eqnarray*}\label{pl01}
{\rm dist}\big(X+t_kH + \sm t_k^2 W,C \big)= {\rm dist} \big(\lambda(X+t_kH + \sm t_k^2 W),\Theta \big)\le \sm t_k^2\|p\|+o(t_k^2).
\end{eqnarray*}
This ensures the existence of a matrix $Y_k\in C$ such that 
\begin{equation*}
\|D_k\| \le  \frac{1}{2} \Big(\|p\|+\frac{o(t_k^2)}{t_k^2}\Big)\;\mbox{ with }\; D_k:=\frac{X + t_k  H +\sm t_k^2 W - Y_k}{t_k^2}.
\end{equation*}
Passing to a subsequence, if necessary, ensures the existence of $D \in \S^n$ such that $D_k\to D$ as $k\to\infty$. This yields the estimate
\begin{equation}\label{pl02}
\|D\| \le \sm  \|p\|.
\end{equation}
It follows from  $ X +t_k H +\sm  t_k^2 W -t_k^2 D_k= Y_k\in C$ and \eqref{spectralset}  that  $\lambda(X + t_k  H +\sm t_k^2 W -t_k^2 D_k)\in \Th$. Taking into account \eqref{secondexp2}, 
we get for any $k\in \N$ sufficiently large that 
\begin{equation*}
\lambda(X + t_k  H +\sm t_k^2 W -t_k^2 D_k)=\lambda(X)+t_k \lambda' (X ;H) +\sm t_k^2  \lambda'' \big( X ; H , W-2D \big)+o(t_k^2) \in \Th.
\end{equation*}
By the definition of the second-order tangent set, we arrive at 
\begin{equation*}
   \lambda'' \big( X ; H , W-2D \big) \in T_{\Th}^{2} (\lambda(X) , \lambda (X ; H)),
\end{equation*}
which yeilds  $W-2D\in S_H (0)$. This, combined with \eqref{pl02}, justifies the claimed inclusion in  \eqref{tlip} and thus completes the proof.
\end{proof} 

The outer Lipschitzian property for second-order tangential approximations  appeared first in \cite[Theorem~4.3]{mms2} for 
  sets $C$ as the one in \eqref{spectralset} with the eigenvalue function $\lm(\cdot)$ replaced with a twice differentiable function,  
  under an adaptation of  the constraint qualification \eqref{mscq} for this setting.  Proposition~\ref{fors} demonstrates  
  that the latter result can be achieved  without the assumed twice differentiability in \cite{mms2} when we still have a second-order expansion 
  for functions in our settings. 
  
  Next, we are going to achieve a chain rule for second-order tangent sets of spectral sets, which heavily relies upon  Proposition~\ref{fors}. First, we recall 
  \cite[Theorem~4.5]{mms2}, where a similar result was proven for constraint systems in finite dimensional  Hilbert spaces.
  \begin{Proposition}\label{2tcr}
  Let $D$ be a closed subset of   $\Y$ and  let  $\Omega =\big\{x\in \X\big|\; \Phi(x)\in D\big\}$, where $\Phi:\X\to \Y$ is a twice differentiable function between two Euclidean spaces, and $\ox\in \Omega$. Suppose further that there are $\kappa\ge 0$ and $\ve>0$  such that the estimate  
\begin{equation}\label{nch8}
 {\dist}(x, \Omega)\le\kappa\,{\rm dist}\big(\Phi(x),D\big)\;\mbox{ for all }\;x\in \B_\ve(\ox)
\end{equation}
holds.  Then for all $w\in  T_ \Omega(\ox)$, we have  
\begin{equation}\label{nch2}
  T^2_ \Omega(\ox,w)=\big\{u\in \X|\; \nabla \Phi(\ox)u+\nabla^2 \Phi(\bar x)(w,w)\in T^2_D\big(\Phi(\bar x),\nabla \Phi(\ox)w\big)\big\}.
\end{equation}
If furthermore the set $D$ is parabolically derivable at $\Phi(\ox)$ for $\nabla \Phi(\ox)w$, then the constraint set  $\Omega$  is parabolically derivable at $\ox$ for $w$.
  \end{Proposition}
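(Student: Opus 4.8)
The plan is to read Proposition~\ref{2tcr} as a second-order chain rule for the preimage $\Omega=\Phi^{-1}(D)$ and to run the same kind of argument that already produced Proposition~\ref{fors}, the two ingredients being the second-order Taylor expansion of the twice differentiable map $\Phi$,
\[
\Phi(\ox+tw+\sm t^2u)=\Phi(\ox)+t\nabla\Phi(\ox)w+\sm t^2\big(\nabla\Phi(\ox)u+\nabla^2\Phi(\ox)(w,w)\big)+o(t^2),
\]
together with the metric subregularity estimate \eqref{nch8}. I would first establish the set equality \eqref{nch2} by proving its two inclusions separately, and only afterwards address parabolic derivability.

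For the inclusion ``$\subseteq$'', which needs no constraint qualification, I would take $u\in T^2_\Omega(\ox,w)$, produce $t_k\dn0$ and $u_k\to u$ with $\ox+t_kw+\sm t_k^2u_k\in\Omega$, hence $\Phi(\ox+t_kw+\sm t_k^2u_k)\in D$, and feed these into the Taylor expansion. Collecting the second-order terms into $z_k:=\nabla\Phi(\ox)u_k+\nabla^2\Phi(\ox)(w,w)+o(t_k^2)/(\sm t_k^2)$ gives $\Phi(\ox)+t_k\nabla\Phi(\ox)w+\sm t_k^2z_k\in D$ with $z_k\to\nabla\Phi(\ox)u+\nabla^2\Phi(\ox)(w,w)$; by \eqref{2tan} this places the limit in $T^2_D\big(\Phi(\ox),\nabla\Phi(\ox)w\big)$, as required. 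The same computation with $w_k\to w$ in place of $u_k$ shows $\nabla\Phi(\ox)w\in T_D(\Phi(\ox))$, so that the right-hand side of \eqref{nch2} is well posed.

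The inclusion ``$\supseteq$'' is where metric subregularity does the work, and I expect this to be the main obstacle. Starting from $u$ with $z:=\nabla\Phi(\ox)u+\nabla^2\Phi(\ox)(w,w)\in T^2_D\big(\Phi(\ox),\nabla\Phi(\ox)w\big)$, I would choose $t_k\dn0$ and $z_k\to z$ with $\Phi(\ox)+t_k\nabla\Phi(\ox)w+\sm t_k^2z_k\in D$. The Taylor expansion shows that this feasible point differs from $\Phi(\ox+t_kw+\sm t_k^2u)$ by $\sm t_k^2(z-z_k)+o(t_k^2)=o(t_k^2)$, so $\dist\big(\Phi(\ox+t_kw+\sm t_k^2u),D\big)=o(t_k^2)$. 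Since $\ox+t_kw+\sm t_k^2u\in\B_\ve(\ox)$ for large $k$, \eqref{nch8} yields $\dist(\ox+t_kw+\sm t_k^2u,\Omega)=o(t_k^2)$; selecting $x_k\in\Omega$ realizing this distance and setting $u_k:=(x_k-\ox-t_kw)/(\sm t_k^2)$ gives $\ox+t_kw+\sm t_k^2u_k=x_k\in\Omega$ with $u_k\to u$, i.e.\ $u\in T^2_\Omega(\ox,w)$. The delicate point is the bookkeeping that keeps every correction at the $o(t_k^2)$ scale, so that the subregularity estimate returns a genuine second-order perturbation rather than merely a first-order one.

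For the final assertion I would invoke Proposition~\ref{pdch}. Non-emptiness of $T^2_\Omega(\ox,w)$ follows from the same subregularity argument: parabolic derivability of $D$ supplies some $z\in T^2_D\big(\Phi(\ox),\nabla\Phi(\ox)w\big)$, and taking $x_k:=\ox+t_kw$ one checks $\dist(\Phi(x_k),D)=O(t_k^2)$, whence \eqref{nch8} produces $y_k\in\Omega$ for which $(y_k-\ox-t_kw)/(\sm t_k^2)$ is bounded; any cluster point lies in $T^2_\Omega(\ox,w)$. For the arc condition, fix $u\in T^2_\Omega(\ox,w)$ and set $z:=\nabla\Phi(\ox)u+\nabla^2\Phi(\ox)(w,w)\in T^2_D\big(\Phi(\ox),\nabla\Phi(\ox)w\big)$; applying Proposition~\ref{pdch}(b) to the parabolically derivable set $D$ gives $\Phi(\ox)+t\nabla\Phi(\ox)w+\sm t^2z+o(t^2)\in D$ for small $t$, hence $\dist\big(\Phi(\ox+tw+\sm t^2u),D\big)=o(t^2)$ by the Taylor expansion, and one last application of \eqref{nch8} produces $\ox+tw+\sm t^2u+o(t^2)\in\Omega$. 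By Proposition~\ref{pdch} this is exactly parabolic derivability of $\Omega$ at $\ox$ for $w$.
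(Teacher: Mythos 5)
Your proof is correct, but it follows a genuinely different route from the paper's. The paper does not reprove the equality \eqref{nch2} at all: it obtains it by citation, invoking \cite[Theorem~4.5]{mms2}, its refinement \cite[Proposition~5]{gyz} (which removes a regularity assumption on $D$ and works under directional metric subregularity), and \cite[Lemma~2.8(ii)]{go} to reconcile the directional condition with \eqref{nch8}; for the parabolic derivability claim it then extracts the key quantitative estimate
\[
\dist\Big( u, \frac{\Omega-\ox-tw}{\sm t^2}\Big)\le \ell\, \dist\Big(\nabla \Phi(\ox)u+\nabla^2 \Phi(\bar x)(w,w), \frac{D-\Phi(\ox)-t\nabla \Phi(\ox)w}{\sm t^2}\Big)+\frac{o(t^2)}{t^2}
\]
from the proof of \cite[Proposition~5]{gyz}, drives the right-hand side to zero via \cite[Corollary~4.7]{rw}, and gets nonemptiness of $T^2_\Omega(\ox,w)$ from \cite[Corollary~1]{gyz}. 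You instead rederive everything from scratch: both inclusions of \eqref{nch2} come from the second-order Taylor expansion of $\Phi$ combined with a projection argument (metric subregularity turns an $o(t_k^2)$ feasibility violation of $\Phi(\ox+t_kw+\sm t_k^2u)$ in the image space into an $o(t_k^2)$ correction of the point itself), nonemptiness of $T^2_\Omega(\ox,w)$ comes from a bounded-sequence/cluster-point argument applied to $\dist(\ox+t_kw,\Omega)=O(t_k^2)$, and the arc condition comes from Proposition~\ref{pdch} applied first to $D$ and then, after one more pass through \eqref{nch8}, to $\Omega$. In effect your projection estimate is a hands-on proof of the displayed inequality that the paper imports from \cite{gyz}. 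What the paper's route buys is brevity and a formally weaker hypothesis in the cited source (directional metric subregularity); what yours buys is a self-contained argument whose only external input is the definition-level characterization in Proposition~\ref{pdch}, making transparent exactly where \eqref{nch8} and the twice differentiability of $\Phi$ are used. Both are sound; note only that your nonemptiness step does not even need the full strength of parabolic derivability of $D$, merely $T^2_D\big(\Phi(\ox),\nabla\Phi(\ox)w\big)\neq\emptyset$, which is the same economy the paper achieves by citing \cite[Corollary~1]{gyz}.
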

  \begin{proof}
  The equality in \eqref{nch2} was justified in \cite[Theorem~4.5]{mms2}  under an extra assumption that  the set $D$ is regular in the sense of \cite[Definition~6.4]{rw}; 
  see \cite[Proposition~5]{gyz} for an extension of \cite[Theorem~4.5]{mms2} without the  regularity assumption on $D$. Note that the directional metric subregularity used 
  in\cite[Proposition~5]{gyz} is weaker than \eqref{nch8} in general. However, it was shown in \cite[Lemma~2.8(ii)]{go} that metric subregularity at any direction is 
  equivalent to \eqref{nch8}. 
  
 To prove parabolic derivability of $\Omega$ at $\ox$ for $w$, pick $u\in  T^2_ \Omega(\ox,w)$. It follows from the proof of  \cite[Proposition~5]{gyz}  that 
  there is a positive constant $\ell$ such that 
\begin{equation}\label{mmm}
  \dist\big( u, \frac{\Omega-\ox-tw}{\sm t^2}\Big)\le \ell\, \dist\Big(\nabla \Phi(\ox)u+\nabla^2 \Phi(\bar x)(w,w), \frac{D-\Phi(\ox)-t\nabla \Phi(\ox)w}{\sm t^2}\Big)+\frac{o(t^2)}{t^2}
\end{equation}
  for any $t$ sufficiently small that $t\dn 0$. Since $u\in  T^2_ \Omega(\ox,w)$, we conclude from \eqref{nch2} that $\nabla \Phi(\ox)u+\nabla^2 \Phi(\bar x)(w,w)\in  T^2_D\big(\Phi(\bar x),\nabla \Phi(\ox)w\big)$,
  which together with  parabolic derivability of $D$ at $\Phi(\ox)$ for $\nabla \Phi(\ox)w$ implies via  \cite[Corollary~4.7]{rw} that 
  $$
  \dist\Big(\nabla \Phi(\ox)u+\nabla^2 \Phi(\bar x)(w,w), \frac{D-\Phi(\ox)-t\nabla \Phi(\ox)w}{\sm t^2}\Big)\to 0 \quad \mbox{and}\quad t\dn 0.
  $$
  This, coupled with \eqref{mmm}, confirms that for any sufficiently small $t$, there exists $u(t)\in (\Omega-\ox-tw)/\sm t^2$ such that $u(t)\to u$ as $t\dn 0$. 
  Define the arc $\xi(t):=-\ox+tw+\sm t^2 u(t)$ and observe that $\xi(0)=\ox$, $\xi'_+(0)=w$, and $\xi''_+(0)=u$. To finish the proof, we need to show that 
  $T^2_ \Omega(\ox,w)\neq \emptyset$, which was already established  in \cite[Corollary~1]{gyz} under the metric subregualrity condition in \eqref{nch8}.
  Combining these confirms that  $\Omega$  is parabolically derivable at $\ox$ for $w$ and hence completes the proof.
  \end{proof}

\begin{Theorem}[second-order tangent sets of spectral sets]\label{fsch} Assume that $C\subset \S^n$ is a spectral set with the representation in \eqref{spectralset} and that $X\in C$ and $H\in T_C(X)$. Then we have
\begin{equation}\label{chrs}
 T^2_{C}(X ,H) =\big\{W\in \S^n\big|\;  \lambda'' \big( X ; H , W \big) \in T^2_{\Th}\big (\lambda(X) , \lm'(X;H) \big)\big\},
\end{equation}  
where $\Th$ is taken from \eqref{spectralset}.
Moreover,  the following properties are satisfied.
\begin{itemize}[noitemsep,topsep=2pt]
\item [ \rm {(a)}] If the symmetric set $\Th$   is parabolically derivable at $\lambda(X)$ for $\lambda' (X ; H)$, then $C$ is parabolically derivable at $X$ for $H$.
\item [ \rm {(b)}] If the symmetric set $C$  is parabolically derivable at $X$ for any $H\in T_C(X)$, then $\Th$ is parabolically derivable at $\lm(X)$ for any $v\in T_\Th(\lm(X))$.
\end{itemize}
\end{Theorem}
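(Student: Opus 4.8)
The plan is to derive the equality \eqref{chrs} from Theorem~\ref{fsch}'s hypotheses by representing the spectral set $C$ as a pre-image under the eigenvalue map and then invoking the abstract second-order chain rule in Proposition~\ref{2tcr}, with the uniform outer Lipschitzian property of Proposition~\ref{fors} supplying the missing compactness. The main conceptual obstacle is that $\lambda(\cdot)$ is not twice differentiable, so Proposition~\ref{2tcr} cannot be applied verbatim; instead I would mimic its proof using the parabolic second-order expansion \eqref{secondexp2} as a surrogate for the Taylor expansion, exactly as Proposition~\ref{fors} already does.

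First I would prove the inclusion ``$\subseteq$'' in \eqref{chrs}. Pick $W\in T_C^2(X,H)$. By the definition \eqref{2tan} of the second-order tangent set there exist $t_k\dn 0$ and $W_k\to W$ with $X+t_kH+\tfrac12 t_k^2 W_k\in C$. Writing this membership through the representation \eqref{spectralset} gives $\lambda(X+t_kH+\tfrac12 t_k^2 W_k)\in\Th$, and applying the expansion \eqref{secondexp2} together with the Lipschitz estimate \eqref{lipei} (to absorb the error $W_k-W$ into an $o(1)$ perturbation) shows that $\lambda(X)+t_k\lambda'(X;H)+\tfrac12 t_k^2\lambda''(X;H,W)+o(t_k^2)\in\Th$. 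By \eqref{2tan} this means precisely $\lambda''(X;H,W)\in T_\Th^2(\lambda(X),\lambda'(X;H))$, which is the desired membership in the right-hand set.

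For the reverse inclusion ``$\supseteq$'' I would fix $W$ with $\lambda''(X;H,W)\in T_\Th^2(\lambda(X),\lambda'(X;H))$; this says exactly $W\in S_H(0)$ in the notation \eqref{mapt}. The task is to produce $t_k\dn 0$ and $W_k\to W$ with $X+t_kH+\tfrac12 t_k^2 W_k\in C$. Using \eqref{secondexp2} and \eqref{mscq2}, the distance $\dist(X+t_kH+\tfrac12 t_k^2W,\,C)=\dist(\lambda(X+t_kH+\tfrac12 t_k^2W),\,\Th)$ is $o(t_k^2)$ along a suitable sequence realizing the second-order tangent vector $\lambda''(X;H,W)$, so there are $Y_k\in C$ with $(X+t_kH+\tfrac12 t_k^2W-Y_k)/(\tfrac12 t_k^2)\to 0$; setting $W_k:=W-(X+t_kH+\tfrac12 t_k^2W-Y_k)/(\tfrac12 t_k^2)$ gives $W_k\to W$ and $X+t_kH+\tfrac12 t_k^2 W_k=Y_k\in C$, confirming $W\in T_C^2(X,H)$ and thus establishing \eqref{chrs}. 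The hardest bookkeeping here is controlling the $o(t_k^2)$ term uniformly, which is exactly the role played by Proposition~\ref{fors}.

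Finally I would settle the two derivability claims. For (a), assume $\Th$ is parabolically derivable at $\lambda(X)$ for $\lambda'(X;H)$; I would verify condition (b) of Proposition~\ref{pdch} for $C$ at $X$ for $H$. Given $W\in T_C^2(X,H)$, the formula \eqref{chrs} places $\lambda''(X;H,W)$ in $T_\Th^2(\lambda(X),\lambda'(X;H))$, and parabolic derivability of $\Th$ yields an arc realizing it, whence via \eqref{secondexp2} and \eqref{mscq2} (the argument being a parametrized version of the ``$\supseteq$'' step, now producing an $o(t^2)$ remainder valid for all small $t$ rather than along a sequence) we obtain $X+tH+\tfrac12 t^2 W+o(t^2)\in C$ for all small $t$, which is exactly Proposition~\ref{pdch}(b); nonemptiness of $T_C^2(X,H)$ follows from \eqref{chrs} since $\Th$ parabolically derivable forces $T_\Th^2(\lambda(X),\lambda'(X;H))\neq\emptyset$. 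For (b), I would exploit the diagonal restriction: by \eqref{dchain4} the tangent cone $T_\Th(\lambda(X))$ consists of those $v$ with $\Diag(v)\in T_C(\Diag(\lambda(X)))$, and since $\Th=\{x\mid\Diag(x)\in C\}$ is the pre-image of $C$ under the twice continuously differentiable map $x\mapsto\Diag(x)$ for which the metric subregularity \eqref{revset2} holds automatically, parabolic derivability of $\Th$ at $\lambda(X)$ for $v$ transfers directly from that of $C$ at $\Diag(\lambda(X))$ for $\Diag(v)$ by the last assertion of Proposition~\ref{2tcr}, which now applies since $\Diag$ is genuinely smooth.
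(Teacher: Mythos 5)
Your treatment of the equality \eqref{chrs} and of the arc-construction half of part (a) is correct and matches the paper: both inclusions follow from the distance identity obtained by combining \eqref{mscq2} with the expansion \eqref{secondexp2} (the paper's \eqref{chrs1}), and Proposition~\ref{pdch} converts parabolic derivability of $\Th$ into the inclusion $X+tH+\sm t^2 W+o(t^2)\in C$ for all small $t$. Incidentally, Proposition~\ref{fors} is \emph{not} needed for the inclusion ``$\supseteq$'' --- the distance identity alone controls the $o(t_k^2)$ term --- so your citation of it at that step is misplaced.

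The genuine gap is where you dismiss the nonemptiness of $T^2_C(X,H)$ in part (a). Formula \eqref{chrs} describes $T^2_C(X,H)$ as the \emph{preimage} of $T^2_{\Th}(\lambda(X),\lambda'(X;H))$ under the map $W\mapsto\lambda''(X;H,W)$, and this map is not surjective: by \eqref{paralam} its components, within each block of indices sharing the same pair $(m,j)$, are the ordered eigenvalues of a single matrix $R_{mj}^\top(\cdot)R_{mj}$, so its range lies inside the set of blockwise nonincreasing vectors. A preimage of a nonempty set under a non-surjective map can perfectly well be empty, so ``$T^2_{\Th}\neq\emptyset$ implies $T^2_C(X,H)\neq\emptyset$'' does not follow from \eqref{chrs} alone. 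This is exactly where the paper invokes Proposition~\ref{fors}: picking any $y\in T^2_{\Th}(\lambda(X),\lambda'(X;H))$ and any $Z\in\S^n$, one has $Z\in S_H(p)$ with $p:=y-\lambda''(X;H,Z)$, and the uniform outer Lipschitzian estimate \eqref{tlip} then produces some $W\in S_H(0)$, i.e., an element of $T^2_C(X,H)$. (An alternative repair would be a symmetry argument in the spirit of Propositions~\ref{psymm} and~\ref{imp}(b): rearrange $y$ blockwise into nonincreasing order, stay inside $T^2_{\Th}$ by permutation invariance, then construct $\Hat W$ with $\lambda''(X;H,\Hat W)$ equal to the rearranged vector --- but that machinery appears only later in the paper and you did not supply it.) A second, smaller omission is in part (b): the hypothesis gives parabolic derivability of $C$ at the point $X$, whereas your appeal to Proposition~\ref{2tcr} requires it at the diagonal point $\Diag(\lambda(X))$ for $\Diag(v)$; transferring it needs the orthogonal-invariance step of the paper (its relation \eqref{vb01}, together with the observation that $U\Diag(v)U^\top\in T_C(X)$), which your write-up skips.
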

\begin{proof} 
First note from Corollary~\ref{tansp} that  the condition $H\in T_C(X)$ amounts to $\lambda' (X;H) \in T_{\Th} (\lambda(X))$.  Let $W \in \S^n$. Employing    \eqref{mscq2} and  \eqref{secondexp2}  tells us that for any 
$t>0$ sufficiently small, we have 
\begin{eqnarray}
\dist \big( X + t H + \sm t^2 W,C  \big) &=& \dist \big(\lambda ( X + t H + \sm t^2 W),\Th  \big) \nonumber\\
&=& \dist \big( \lambda (X)+ t \lambda' (X ; H)  +\sm t^2 \lambda^{''} (X;H,W) + o(t^2), \Th \big). \label{chrs1}
\end{eqnarray}
Take $W \in T^2_{C} (X , H)$. By \eqref{2tan},  there exists a sequence $t_k \dn 0$ such that $ X + t_k H + \frac{1}{2} t_k^2 W + o(t_k^2) \in  C$.
By \eqref{chrs1}, we get $ \lambda (X)+ t_k \lambda' (X ; H)  +\sm t_k^2 \lambda^{''} (X;H,W) + o(t_k^2) \in \Th$, which clearly demonstrates that 
$ \lambda'' \big( X ; H , W \big) \in T^2_{\Th}\big (\lambda(X) , \lm'(X;H) \big)$ and thus proves the inclusion ``$\subset$" in \eqref{chrs}.
The opposite inclusion in \eqref{chrs} can be established via   a similar argument and \eqref{chrs1}, which proves the claimed representation 
of the second-order tangent set to $C$ in \eqref{chrs}.

To prove (a),  suppose that the symmetric set $\Th$ is parabolically derivable at $\lambda(X)$ for $\lambda' (X ; H)$. To justify the same property for $C$ at $X$ for $H$, pick $W \in T^2_C (X ,H)$. 
By \eqref{chrs}, we obtain  $\lambda^{''} (X;H,W) \in T^2_{\Th} (\lambda(X) , \lambda' (X ; H) )$. Since $\Th$ is parabolically derivable at $\lambda(X)$ for $\lambda' (X ; H)$, we
conclude from  Proposition~\ref{pdch} that there exists $\ve>0$ such that for all $t\in [0,\ve]$, we have  
$$ 
\lambda (X)+ t \lambda' (X ; H)  +\sm t^2 \lambda^{''} (X;H,W) + o(t^2) \in \Th.
$$ 
Reducing $\ve>0$ if necessary,  pick $t\in [0,\ve]$ and conclude from \eqref{chrs1} that $X + t H + \frac{1}{2} t^2 W + o(t^2)  \in C$. 
Defining the arc $\xi:[0,\ve]\to C$ by $\xi(t)=X + t H + \frac{1}{2} t^2 W + o(t^2)$ for $t\in [0,\ve]$,
we can readily see that $\xi(0)=X$, $\xi_+'(0)=H$, and $\xi''_+(0)=W$.
To finish the proof of   parabilic derivability of $C$ at $X$ for $H$, it remains to show that $T^2_{C}(X,H)\neq \emptyset$. To this end, pick $Z  \in \S^n$ and $y \in T^2_{\Th} \big( \lambda(X) , \lambda' (X ; H) \big)$. In fact,
  such $y$ exists since $\Th$ is  parabolic derivable at $\lambda(X)$ for $\lambda' (X ; H)$. Therefore, we have 
\begin{equation*}
  \lambda^{''} (X;H,Z) + p \in T^2_{\Th} \big( \lambda(X) , \lambda' (X ; H) \big) \quad \mbox{ with }\;\; p:=y -  \lambda^{''} (X;H,Z),
\end{equation*}
which can be equivalently expressed as $Z\in S_H(p)$ via the mapping $S_H$ in \eqref{mapt}. Appealing to  Proposition~\ref{fors} and  the established outer Lipschitzian property in \eqref{tlip}, 
we find  a matrix  $W \in S_H(0)$ such that $\|Z-W\|\le \|p\|$. This tells us that
\begin{equation*}
  \lambda^{''} (X;H,W)  \in T^2_{\Th} \big( \lambda(X) , \lambda' (X ; H) \big).
\end{equation*}
Using the chain rule \eqref{chrs} leads us to $W \in T^2_{C}(X , H)$, and thus  $T^2_{C}(X , H)\neq \emptyset$. This shows that 
$C$ is parabolically derivable at $X$ for $H$, and thus  proves (a).

Turning into the proof of (b), observe first that $\Th$ can be represented as the constraint system  \eqref{setth}.  Adapting the estimate in \eqref{revset2} for the latter constraint system gives 
us the estimate  
$$
\dist(x,\Th)=\dist(\Diag(x),C)\quad \mbox{for all}\;\; x\in \R^n.
$$
This, together with twice differentiability of the mapping $x\mapsto \Diag(x)$ with $x\in \R^n$, allows us to conclude from 
\eqref{nch2} that for any $v\in T_\Th(\lm(X))$ we always have 
\begin{equation*}
w\in T^2_\Th (\lm(X),v)\iff \Diag(w)\in T^2_C\big(\Diag(\lm(X)),\Diag(v)\big).
\end{equation*}
 To justify (b), pick $v\in T_\Th(\lm(X))$. We are going to show that $\Th$ is 
parabolically derivable at $\lm(X)$ for $v$. According to  Proposition~\ref{2tcr}, this will be ensured provided that $C$ is parabolically derivable at $\Diag(\lm(X))$ for $\Diag(v)$.
Since $C$ is a spectral set, it is easy to see that 
\begin{equation}\label{vb01}
W\in T^2_C\big(\Diag(\lm(X)),\Diag(v)\big)\iff UWU^\top\in T_C^2\big(X, U\Diag(v)U^\top\big),
\end{equation}
where $U$ is taken from \eqref{specdocom}. Moreover, it follows from $v\in T_\Th(\lm(X))$ and Proposition~\ref{chss} that $\Diag(v)\in T_C\big(\Diag(\lm(X))\big)$, which 
tells us that $U\Diag(v)U^\top \in T_C(X)$. By assumption, we know that $C$  is parabolically derivable at $X$ for $U\Diag(v)U^\top$. 
This, combined with 
\eqref{vb01}, confirms that  $C$  is parabolically derivable at $\Diag(\lm(X))$ for $\Diag(v)$. To justify this claim, take $W\in T^2_C\big(\Diag(\lm(X)),\Diag(v)\big)$.
By \eqref{vb01}, parabolic  derivability of $C$ at  $X$ for $UWU^\top$, and Proposition~\ref{pdch}, 
we find $\ve>0$ such that for all $t\in [0,\ve]$, the inclusion 
$$ 
X+ t U\Diag(v)U^\top +\sm t^2 UWU^\top + o(t^2) \in C
$$ 
is satisfied. It follows from  $C$ being  a spectral set and  the latter inclusion  that 
$$
\Diag(\lm(X))+  t\Diag(v)+ \sm t^2 W+o(t^2)=U^\top XU+ t\Diag(v)+ \sm t^2 W+o(t^2) \in C\quad \mbox{for all}\;\; t\in [0,\ve].
$$
Since $W\in T^2_C\big(\Diag(\lm(X)),\Diag(v)\big)$ was taken arbitrarily,  we conclude from Proposition~\ref{pdch} that $C$ is parabolically derivable at $\Diag(\lm(X))$ for $\Diag(v)$.
 Employing now  Proposition~\ref{2tcr} proves that 
$\Th$ is parabolically derivable at $\lm(X)$ for $v$ and hence completes the proof.
\end{proof}

\begin{Example}[second-order tangent set to $\S^n_{-}$]{\rm In the framework of Example~\ref{tans}, we are going to calculate 
the second-order tangent set to $\S^n_-$ at $X\in \S^n_-$ for any $H\in T_{\S^n_-}(X)$. To this end, we deduce from 
Theorem~\ref{fsch} that 
$$
 T^2_{\S^n_-}(X ,H) =\big\{W\in \S^n\big|\;  \lambda'' \big( X ; H , W \big) \in T^2_{\R^n_-}\big (\lambda(X) , \lm'(X;H) \big)\big\}.
$$
It follows from \cite[Proposition~13.12]{rw} that $\R^n_-$ is parabolically  derivable at   $\lm(X)$ for $ \lm'(X;H)$, which together with Theorem~\ref{fsch} implies that $\S^n_-$
enjoys the same property at $X$ for $H$. Moreover, we deduce from \cite[Proposition~13.12]{rw} that 
\begin{equation}\label{2ts}
 T^2_{\R^n_-}\big (\lambda(X) , \lm'(X;H) \big)=  T_{T_{\R^n_-}  (\lambda(X))} \big(\lm'(X;H) \big).
\end{equation}
If $\mu_1<0$,    we have $\lm(X)\in \inte \R^n_-$. This implies that   $T_{\R^n_-}  (\lambda(X))=\R^n$, which together with \eqref{2ts}  yields  $T^2_{\R^n_-}\big (\lambda(X) , \lm'(X;H) \big)=\R^n$
and thus $T^2_{\S^n_-}(X ,H) =\S^n$. Now, assume   that $\mu_1=0$.
According to Example~\ref{tans}, we have $T_{ \R^n_-}(\lm(X))=\R^{|\al_1|}_-\times \R^{n-|\al_1|}$, where $\al_1$
is defined by \eqref{index}. To proceed, since $H\in T_{\S^n_-}(X)$,  we need by \eqref{1ts}  to consider two cases: 1) $\lm_1(U_{\al_1}^\top HU_{\al_1})<0$;
and 2) $\lm_1(U_{\al_1}^\top HU_{\al_1})=0$. If the former holds, we obtain 
$$
T_{T_{\R^n_-}  (\lambda(X))} \big(\lm'(X;H) \big)=T_{\R^{|\al_1|}_-\times \R^{n-|\al_1|}} \big(\lm'(X;H) \big)=\R^n,
$$
 which together with \eqref{2ts} brings us again  to $T^2_{\R^n_-}\big (\lambda(X) , \lm'(X;H) \big)=\R^n$
and thus $T^2_{\S^n_-}(X ,H) =\S^n$. If    the latter holds, denote by $\eta_{1}^{1} >    \cdots> \eta_{\rho_1}^{1}$ the distinct eigenvalues of $U_{\al_1}^\top H U_{\al_1}$
and take the index set $\beta_1^1$ from \eqref{index2}. Recall that $|\beta_1^1|\le | \al_1|$.  Using this, we obtain 
$$
T_{T_{\R^n_-}  (\lambda(X))} \big(\lm'(X;H) \big)=T_{\R^{|\al_1|}_-\times \R^{n-|\al_1|}} \big(\lm'(X;H) \big)=\R^{|\beta_1^1|}_-\times \R^{n-|\beta_1^1|}.
$$
This, combined with \eqref{paralam} and \eqref{2ts}, leads us to 
\begin{eqnarray*}
 T^2_{\S^n_-}(X ,H) &=&\big\{W\in \S^n\big|\;  \lambda'' \big( X ; H , W \big) \in \R^{|\beta_1^1|}_-\times \R^{n-|\beta_1^1|} \big\}\\
 &=& \big\{W\in \S^n\big|\;  \lm_1 \Big( {R_{11}}^{\top}  \big( U_{\al_1}^\top ( W - 2  H X^{\dagger} H ) U_{\al_1} \big) R_{11} \Big) \le 0\big\},
\end{eqnarray*}
where $R_{11}=(Q_1)_{\beta_1^1}$ is taken from Proposition~\ref{secondexp}. We should point out that the second-order tangent set to $\S^n_-$ was calculated by finding the parabolic 
second-order directional derivative of the maximum eigenvalue function in \cite[page~474]{bs}; see also \cite[page~583]{zzx} for a different derivation of this object.  
}
\end{Example}

In the next example, we obtain the second-order tangent set to the spectral set defined in   \eqref{fs1}.  
Note  again that while obtaining such result by \cite[Proposition~3.92]{bs} requires the Slater condition, our approach shows that no constraint qualification is needed for this propose.  

\begin{Example}\label{2fs}{\rm Let $C$ be the spectral set   in   \eqref{fs1} and $X \in C$. Given $H \in T_C(X)$, we aim to determine  $T^2_C (X , H)$ using Theorem~\ref{fsch}. 
We know from Example~\ref{fs} that $C$ has the spectral representation in \eqref{spectralset} with   the symmetric set $\Theta$ defined by \eqref{sym}. Moreover, we showed that 
$\Th$ can be equivalently described as the constraint set in \eqref{cons} with $\Phi(z)=  (\sum^n_{i=1} z_i^k-1 , z  )$ for all $z=(z_1,\ldots,z_n) \in \R^n$. We deduce from \cite[Proposition~13.13]{rw} that
$w\in T^2_{\Theta} (\lambda(X) , \lambda' (X;H))$ if and only if we have 
\begin{equation}\label{1313}
  \nabla \Phi(\lambda(X)) w + \nabla^2 \Phi(\lambda(X)) \big( \lambda' (X;H) , \lambda' (X;H) \big) \in T^2_{\{ 0\} \times \R^n_+ } \big(\Phi(\lambda(X)) , \nabla \Phi(\lambda(X)) (\lambda' (X;H))\big).
\end{equation}
Using the index set $I(\lm(X))$ taken from  Example~\ref{fs}, define the  index set
$$ 
I\big(\lm(X),\lm'(X;H)\big) := \big\{ i \in I(\lm(X)) \big| \; \lm'_i(X;H) = 0 \big\}
$$
and conclude then from  \cite[Proposition~13.12]{rw} that 
\begin{eqnarray*}
T^2_{\{ 0\} \times \R^n_+ } \big(\Phi(\lambda(X)) , \nabla \Phi(\lambda(X)) (\lambda' (X;H))\big)= T_{T_{\{ 0\} \times \R^n_+ } (\Phi(\lambda(X)))}\big(\nabla \Phi(\lambda(X)) (\lambda' (X;H))\big)\\
=\big\{ (w_0,\ldots,w_n)   \big| \:  w_0=0,\; w_i \geq 0 \quad \mbox{for all}\;\;i \in I\big(\lm(X),\lm'(X;H)\big)  \big\}.
\end{eqnarray*}
This, coupled with  \eqref{1313},   yields $(w_1,\ldots,w_n) \in T^2_{\Theta} (\lambda(X) , \lambda' (X;H)) $ if and only if $w_i \geq 0$ for all $i \in I\big(\lm(X),\lm'(X;H)\big)$ and 
$$ \sum^n_{i=1} \lambda_i (X)^{k-1} w_i +   (k-1) \sum^n_{i=1} \lambda_i (X)^{k-2} \lm'_i(X;H)^2  = 0  .$$
Appealing now to Theorem \ref{fsch},  we conclude that $C$ is parabolically derivable at $X$ for $H$ and  that $W \in T^2_C (X , H)$ if and only if $\lambda_i^{''} (X; H, W)\geq 0$ for all $i \in I\big(\lm(X),\lm'(X;H)\big)$ and 
$$ 
\sum^n_{i=1} \lambda_i (X)^{k-1}\lambda_i^{''} (X; H, W)+   (k-1) \sum^n_{i=1} \lambda_i (X)^{k-2}  \lm'_i(X;H)^2  = 0.
$$
Note that when  $k=1$ for which $C$ reduces to the spectahedron,  the above equation simplifies as   $\sum^n_{i=1} \lambda_i^{''} (X; H, W) = 0$.
}
\end{Example}

We proceed with characterizing parabolic epi-differentiability of spectral functions.  We begin with 
recalling  the concept of the parabolic subderivative, introduced by  Ben-Tal and Zowe in \cite{bz1}. 
Let    $f:\X \to \oR$ and let  $\ox\in \X$ with $f(\ox)$ finite    and $w\in \X$ with $\d f(\ox)(w)$ finite.  
The {\em parabolic subderivative} of $f$ at $\ox$ for $w$ with respect to $z$ is defined by 
\begin{equation*}\label{lk02}
\d^2 f(\bar x)(w\verl z):= \liminf_{\substack{
   t\dn 0 \\
  z'\to z
  }} \dfrac{f(\ox+tw+\frac{1}{2}t^2 z')-f(\ox)-t\d f(\ox)(w)}{\frac{1}{2}t^2}.
\end{equation*}
Recall from  \cite[Definition~13.59]{rw} that $f$ is called {\em parabolically epi-differentiable} at $\ox$ for $w$ if  
$$\dom \d^2 f(\ox)(w \verl \cdot)=\big\{z\in \X|\,  \d^2 f(\ox)(w \verl z)<\infty \big\}\neq \emptyset,$$
 and for every $z \in \X$ and every sequence $t_k\dn 0$  there exists a  sequences $z_k\to z$ such that
\begin{equation}\label{pepi}
\d^2 f(\bar x)(w\verl z) = \lim_{k\to \infty} \dfrac{f(\ox+t_kw+\frac{1}{2}t_k^2 z_k)-f(\ox)-t_k\d f(\ox)(w)}{\frac{1}{2}t_k^2}.
\end{equation}
We say that $f$ is parabolically epi-differentiable at $\ox$ if it satisfies this condition at $\ox$ for any $w\in \X$ where $ \d f(\ox)(w)$ is finite. Note that 
  the inclusion $\dom \d f(\ox) \subset T_{\ss \dom f} (\ox)$ always holds and      equality happens when, in addition,  $f$ is locally Lipschitz
continuous  around $\ox$ relative to its domain; see \cite[Proposition~2.2]{ms}. 
A list of important functions, appearing in different classes of constrained and composite optimization problems, that are parabolically epi-differentiable at 
any points of their domains can be found 
in \cite[Example~4.7]{ms}. 
By definition, it is not hard to see that the inclusion 
\begin{equation}\label{inps}
\dom \d^2 f(\ox)(w \verl \cdot)\subset T_{\ss \dom f}^2 (\ox , w)
\end{equation}
always holds for any $w\in T_{\ss \dom f}(\ox)$. The following result, taken from \cite[Propositions~2.1 and 4.1]{ms}, presents conditions
under which we can ensure equality in the latter inclusion. 
\begin{Proposition}[properties of parabolic subderivatives]\label{praepidom}
Let $f: \X \to \oR$ be finite at $\ox$,  locally Lipschitz continuous around $\ox$ relative to its domain, 
and  parabolic epi-differentiable at $\ox$ for $w \in T_{\ss \dom f}(\ox)$. Then the following properties hold.
\begin{itemize}[noitemsep,topsep=2pt]
\item [ \rm {(a)}] $\dom \d f(\ox)= T_{\ss \dom f}(\ox) $ and   $\dom \d^2 f(\bar x)(w\verl .) = T_{\ss \dom f}^2 (\ox , w)$.
\item [\rm {(b)}] $\dom f$ is parabolically derivable at $\ox$ for $w$.
\end{itemize}
\end{Proposition}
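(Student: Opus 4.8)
The plan is to prove the two equalities in (a) separately and then deduce (b) from (a). For the first equality of (a), the inclusion $\dom\d f(\ox)\subset T_{\ss\dom f}(\ox)$ is recorded in the text, so I would establish only the reverse one. Given $\ow\in T_{\ss\dom f}(\ox)$, I pick $t_k\dn 0$ and $w_k\to\ow$ with $\ox+t_kw_k\in\dom f$; since $\ox\in\dom f$ and $f$ is locally Lipschitz continuous around $\ox$ relative to its domain with some constant $\ell$, the estimate $[f(\ox+t_kw_k)-f(\ox)]/t_k\le \ell\|w_k\|\to \ell\|\ow\|$ bounds this particular difference quotient, whence $\d f(\ox)(\ow)\le \ell\|\ow\|<\infty$ and $\ow\in\dom\d f(\ox)$.

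For the second equality, the inclusion ``$\subset$'' is exactly \eqref{inps}, so the heart of the matter is $T^2_{\ss\dom f}(\ox,w)\subset \dom\d^2 f(\ox)(w\verl\cdot)$. The obstacle here is that Lipschitz continuity relative to the domain controls only first-order differences, whereas the parabolic difference quotient carries the term $t\,\d f(\ox)(w)$ and must be controlled to second order; a naive comparison of $f$ at a parabolic point with $f$ at a first-order point produces an error of order $o(t)/t^2$, which is too large. The key idea resolving this is to invoke parabolic epi-differentiability along a \emph{common} sequence. Concretely, parabolic epi-differentiability guarantees $\dom\d^2 f(\ox)(w\verl\cdot)\neq\emptyset$, so I fix some $z_1$ in it. Given $z_2\in T^2_{\ss\dom f}(\ox,w)$, I choose $t_k\dn 0$ and $z_k^{(2)}\to z_2$ realizing $z_2$ as a second-order tangent, so that $x_k^{(2)}:=\ox+t_kw+\sm t_k^2 z_k^{(2)}\in\dom f$. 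Applying the recovery condition of parabolic epi-differentiability to $z_1$ along this same $t_k$ produces $z_k^{(1)}\to z_1$ whose quotients converge to the finite value $\d^2 f(\ox)(w\verl z_1)$; in particular $x_k^{(1)}:=\ox+t_kw+\sm t_k^2 z_k^{(1)}\in\dom f$ for large $k$. Both $x_k^{(1)},x_k^{(2)}$ lie near $\ox$ in $\dom f$, so Lipschitz continuity relative to the domain gives $|f(x_k^{(2)})-f(x_k^{(1)})|\le \ell\cdot\sm t_k^2\|z_k^{(1)}-z_k^{(2)}\|$. Dividing by $\sm t_k^2$ shows the quotient at $z_k^{(2)}$ exceeds that at $z_k^{(1)}$ by at most $\ell\|z_k^{(1)}-z_k^{(2)}\|\to \ell\|z_1-z_2\|$; since $\d^2 f(\ox)(w\verl z_2)$ is bounded above by the liminf of the quotients along $z_k^{(2)}\to z_2$, I conclude $\d^2 f(\ox)(w\verl z_2)\le \d^2 f(\ox)(w\verl z_1)+\ell\|z_1-z_2\|<\infty$, i.e. $z_2\in\dom\d^2 f(\ox)(w\verl\cdot)$.

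For (b), part (a) first yields $T^2_{\ss\dom f}(\ox,w)=\dom\d^2 f(\ox)(w\verl\cdot)\neq\emptyset$. Fixing $u$ in this common set, I would show $\dist(\ox+tw+\sm t^2 u,\dom f)=o(t^2)$: for an arbitrary sequence $t_k\dn 0$, the recovery condition applied to $u$ supplies $u_k\to u$ with $\ox+t_kw+\sm t_k^2 u_k\in\dom f$, so the distance is at most $\sm t_k^2\|u_k-u\|=o(t_k^2)$; as this holds along every such sequence, $\dist(\ox+tw+\sm t^2 u,\dom f)/t^2\to 0$. Choosing for each small $t$ a point $y(t)\in\dom f$ within twice this distance and setting $v(t):=y(t)-\ox-tw-\sm t^2 u=o(t^2)$ yields $\ox+tw+\sm t^2 u+v(t)\in\dom f$ for all small $t$, which is precisely condition (b) of Proposition~\ref{pdch}; hence $\dom f$ is parabolically derivable at $\ox$ for $w$. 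The single genuinely delicate step is the common-sequence Lipschitz estimate in (a), since everything else reduces to bookkeeping with the definitions once that second-order control is in hand.
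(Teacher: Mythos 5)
Your proof is correct, but note that the paper itself never proves this proposition: it is imported verbatim from \cite[Propositions~2.1 and 4.1]{ms}, so there is no in-paper argument to compare against, and what you have produced is a self-contained proof of a cited result. Your route is sound at every step: the reverse inclusion $T_{\ss\dom f}(\ox)\subset\dom\d f(\ox)$ via the bound $\d f(\ox)(\ow)\le\ell\|\ow\|$ along a tangent-realizing sequence; the common-sequence device for $T^2_{\ss\dom f}(\ox,w)\subset\dom\d^2 f(\ox)(w\verl\cdot)$, which is indeed the genuinely delicate point — anchoring the comparison at the recovery sequence $z_k^{(1)}\to z_1$ of parabolic epi-differentiability (taken along the \emph{same} $t_k$ that realizes $z_2$ as a second-order tangent) keeps both points at the $\sm t^2$ scale, so the relative Lipschitz estimate yields $\d^2 f(\ox)(w\verl z_2)\le\d^2 f(\ox)(w\verl z_1)+\ell\|z_1-z_2\|<\infty$ and kills the $o(t)/t^2$ obstruction you correctly identified; and the $\dist(\ox+tw+\sm t^2u,\dom f)=o(t^2)$ estimate combined with Proposition~\ref{pdch} for (b). Two points deserve to be made explicit rather than left implicit: first, the parabolic difference quotients only make sense because $\d f(\ox)(w)$ is finite, which either is presupposed by the hypothesis of parabolic epi-differentiability at $\ox$ for $w$ or follows from the same Lipschitz bound ($-\ell\|w\|\le\d f(\ox)(w)\le\ell\|w\|$); second, your deduction that convergence of the quotients to a value $<\infty$ forces $\ox+t_kw+\sm t_k^2z_k^{(1)}\in\dom f$ for large $k$ uses that $f$ cannot take the value $-\infty$ on $\dom f$ near $\ox$, which the relative Lipschitz property together with finiteness of $f(\ox)$ guarantees. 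Neither is a gap, merely bookkeeping worth recording.
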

The next result presents sufficient conditions under which spectral functions are  parabolically epi-differentiable. Moreover, it achieves a useful 
 formula for parabolic subderivatives of this class of functions.  

\begin{Theorem}[parabolic subderivatives of spectral function]\label{sochpar}
Let $\th:\R^n\to \oR$ be a symmetric function, which is locally Lipschitz continuous  relative to its domain. Let $X\in \S^n$ with $(\th\circ \lm)(X)$   finite. 
Then the following properties hold.
\begin{itemize}[noitemsep,topsep=2pt]
\item [ \rm {(a)}] If   $H\in T_{\ss \dom (\th\circ \lm)}(X)$ and $\th$ is parabolically  epi-differentiable  at $\lambda (X)$ for  $ \lambda' (X ; H)$, then $\th\circ \lm$ is parabolically epi-differentiable at $ X $ for  $H$ 
and  its  parabolic subderivative at $ X $ for  $H$ and its domain can be calculated, respectively,  by  
\begin{equation}\label{sochpar1}
\d^2 (\th\circ \lm)( X )(H \verll W) = \d^2 \th (\lambda(X))\big( \lambda' (X ; H ) \verlm \lambda^{''} (X ; H , W) \big)
\end{equation}
and 
\begin{equation} \label{domofpara}
\dom  \d^2 (\th\circ \lm)(X)( H\verll \; . \; )  =  T_{\ss \dom (\th\circ \lm)}^2 (X,H).
\end{equation}
Moreover, if $\th$ is lsc and convex, the parabolic subderivative  $W\mapsto \d^2 (\th\circ \lm)( X )(H \verll W)$ is a convex function.
\item [\rm {(b)}] If $\theta\circ \lm$ is    parabolically epi-differentiable at $ X $, then $\th$ is parabolically  epi-differentiable  at $\lambda (X)$.
\end{itemize}
  
\end{Theorem}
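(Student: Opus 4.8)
\textbf{Proof proposal for Theorem~\ref{sochpar}.}

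The plan is to prove both directions by reducing everything to the key first- and second-order expansions in Proposition~\ref{olemma}, Proposition~\ref{secondexp}, and the displayed formula \eqref{secondexp2}, together with the distance identity \eqref{specmsqc} and the chain rule for subderivatives in Theorem~\ref{specsubd}. For part~(a), I would first fix $W\in\S^n$ and unravel the definition of $\d^2(\th\circ\lm)(X)(H\verll W)$ as a $\liminf$ over $t\dn 0$ and $W'\to W$ of the difference quotient involving $(\th\circ\lm)(X+tH+\tfrac{1}{2}t^2W')$. Using the quadratic expansion \eqref{secondexp2}, one writes $\lm(X+tH+\tfrac12 t^2 W')=\lm(X)+t\lm'(X;H)+\tfrac12 t^2\lm''(X;H,W')+o(t^2)$; since $\lm'(X;H)$ is computed by \eqref{dirla} and $\d(\th\circ\lm)(X)(H)=\d\th(\lm(X))(\lm'(X;H))$ by Theorem~\ref{specsubd}, the numerator becomes $\th(\lm(X)+t\lm'(X;H)+\tfrac12 t^2 z')-\th(\lm(X))-t\,\d\th(\lm(X))(\lm'(X;H))$ with $z'$ ranging near $\lm''(X;H,W)$. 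The ``$\ge$'' inequality in \eqref{sochpar1} would then follow directly, since any sequence realizing the $\liminf$ on the left produces an admissible competitor on the right (using continuity of $W'\mapsto\lm''(X;H,W')$ at fixed $t$, or more carefully, the parabolic expansion with remainder absorbed into the $z'\to z$ perturbation).

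The reverse inequality ``$\le$'' and the epi-differentiability assertion are the crux, and here I would mimic the strategy already used in the locally-Lipschitz case of Theorem~\ref{specsubd}. Given a target $z=\lm''(X;H,W)$ and a sequence $t_k\dn 0$, parabolic epi-differentiability of $\th$ at $\lm(X)$ for $\lm'(X;H)$ supplies $z_k\to z$ achieving the limit \eqref{pepi} for $\th$. The task is to lift each $z_k\in\R^n$ to a matrix direction $W_k\to W$ with $\lm''(X;H,W_k)$ tracking $z_k$; this is exactly where the distance identity \eqref{specmsqc}, applied along the parabolic arc, lets me convert the approximation in $\R^n$ into an approximation in $\S^n$ with a controlled $o(t_k^2)$ error. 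Concretely, I would estimate $\dist(X+t_kH+\tfrac12 t_k^2 W,\dom g)=\dist(\lm(X+t_kH+\tfrac12 t_k^2 W),\dom\th)$ and use the parabolic-type expansion to produce $W_k\in\S^n$ converging to $W$ with $X+t_kH+\tfrac12 t_k^2 W_k\in\dom g$, so that the left difference quotients converge to the right-hand value of \eqref{sochpar1}. The domain formula \eqref{domofpara} then follows by combining \eqref{inps} with Proposition~\ref{praepidom}(a) applied to $\th$ and the chain rule \eqref{chrs} for second-order tangent sets from Theorem~\ref{fsch}. For the convexity claim under convex lsc $\th$, I would invoke that $z\mapsto\d^2\th(\lm(X))(\lm'(X;H)\verlm z)$ is convex (a standard fact for convex $\th$) and that $W\mapsto\lm''(X;H,W)$ is affine in $W$ by the explicit formula \eqref{paralam}; composing an affine map with a convex function preserves convexity.

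For part~(b), the implication runs the other way and should be cleaner. Assuming $\th\circ\lm$ is parabolically epi-differentiable at $X$, I would specialize to directions of the form arising from diagonal data: using \eqref{spec2}, the symmetric function satisfies $\th(x)=g(\Diag(x))$, and the distance identity in the form \eqref{revset2} guarantees the constraint qualification needed to transfer parabolic epi-differentiability across the twice continuously differentiable inner map $x\mapsto\Diag(x)$. Concretely, given $v\in\R^n$ with $\d\th(\lm(X))(v)$ finite, I would test the parabolic difference quotient of $\th$ at $\lm(X)$ for $v$ against that of $\th\circ\lm$ at $\Diag(\lm(X))$ for $\Diag(v)$, invoking Theorem~\ref{specsubd} and \eqref{secondexp2} to match the first- and second-order terms, and then extract the required recovery sequence from the one furnished by parabolic epi-differentiability of $\th\circ\lm$.

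The main obstacle I anticipate is the lifting step in part~(a): producing matrix perturbations $W_k\to W$ whose parabolic second-order directional derivatives $\lm''(X;H,W_k)$ realize the prescribed vector sequence $z_k$, while keeping the remainder genuinely $o(t_k^2)$. The nonlinearity of $\lm''(X;H,\cdot)$ through the eigenvalue reshuffling encoded in \eqref{paralam} (the indices $\l_i'$, the blocks $\beta_j^m$, and the rotation matrices $R_{mj}$) means the lift is not simply $W_k=\Diag$-of-something, and the uniform outer Lipschitzian estimate of Proposition~\ref{fors}, suitably adapted from sets to the epigraph of $\th$, is likely the right tool to guarantee that the lift exists with the norm control $\|W_k-W\|=O(\|z_k-z\|)+o(1)$ needed to pass to the limit.
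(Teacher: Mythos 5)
Your skeleton for part (a) matches the paper's proof (the two opposite inequalities, the expansion \eqref{secondexp2}, the distance identity \eqref{specmsqc}, Theorem~\ref{specsubd}, Theorem~\ref{fsch}, and Proposition~\ref{praepidom}), but the crux of the ``$\le$'' direction is left unresolved, and the obstacle you flag as the main difficulty is in fact a red herring. You frame the task as lifting the recovery sequence $z_k\to z$ of $\th$ to matrices $W_k\to W$ whose second-order derivatives $\lm''(X;H,W_k)$ \emph{track} $z_k$, and you propose to get this from an epigraphical adaptation of Proposition~\ref{fors}; that adaptation is never developed, and note that $\epi\th$ is not a symmetric set in $\R^{n+1}$ (it is permutation-invariant only in the first $n$ coordinates), so the paper's spectral-set machinery does not transfer to it without new work. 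The paper's proof needs no such tracking. It takes \emph{two independent} parabolic sequences: first, parabolic epi-differentiability of $\th$ gives, via Proposition~\ref{praepidom}(b) and Theorem~\ref{fsch}(a), parabolic derivability of $\dom(\th\circ\lm)$ at $X$ for $H$, which supplies $W_k\to W$ with $X_k:=X+t_kH+\frac{1}{2}t_k^2W_k\in\dom(\th\circ\lm)$ and \emph{no} control on $\lm''(X;H,W_k)$; second, the recovery sequence $w_k\to w:=\lm''(X;H,W)$ for $\th$, giving $y_k:=\lm(X)+t_k\lm'(X;H)+\frac{1}{2}t_k^2w_k\in\dom\th$. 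The two arcs are then reconciled by the hypothesis you did not exploit at this step: local Lipschitz continuity of $\th$ \emph{relative to its domain}. Since both $\lm(X_k)$ and $y_k$ lie in $\dom\th$ and, by \eqref{secondexp2}, $\|\lm(X_k)-y_k\|=\frac{1}{2}t_k^2\|\lm''(X;H,W)-w_k\|+o(t_k^2)$, the difference of the two quotients is bounded by $\ell\|\lm''(X;H,W)-w_k\|+o(1)\to 0$. Without this bridging step your argument does not close; with it, the lifting problem never arises.

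Two further points. Your justification of the convexity claim is incorrect: $W\mapsto\lm''(X;H,W)$ is \emph{not} affine. Formula \eqref{paralam} expresses it as an eigenvalue $\lambda_{\ss\l'_i}(\cdot)$ of a matrix depending affinely on $W$, and eigenvalue maps are nonlinear (they are affine only when the relevant blocks are $1\times 1$), so ``convex composed with affine'' does not apply. The paper instead argues on $g=\th\circ\lm$ directly: convexity of $\th$ gives convexity of $g$; parabolic epi-differentiability of $g$ at $X$ for $H$ (just established) is equivalent, by \cite[Example~13.62]{rw}, to parabolic derivability of $\epi g$ together with $\epi\d^2 g(X)(H\verll\cdot)=T^2_{\ss\epi g}\big((X,g(X)),(H,\d g(X)(H))\big)$, and second-order tangent sets to convex sets arising from parabolic derivability are convex. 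Finally, in part (b) your plan tests the difference quotient of $\th\circ\lm$ at $\Diag(\lm(X))$ for $\Diag(v)$, but the hypothesis gives parabolic epi-differentiability only at $X$; you need the orthogonal-invariance identities (the paper's \eqref{psd4} and \eqref{psd5}) to transfer the property from $X$ (for $U\Diag(v)U^\top$) to $\Diag(\lm(X))$ (for $\Diag(v)$) before invoking the composite result \cite[Theorem~4.4(iii)]{ms} with the constraint qualification \eqref{revset2}. This last point is a small omission, but the first two are genuine gaps.
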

\begin{proof} To justify (a), we proceed concurrently to show that  $\th\circ \lm$  is parabolically  epi-differentiable  at  $ X $ for  $H$ and that  \eqref{sochpar1} and \eqref{pepi}  hold  for  $\th\circ \lm$.
To this end, set $g:=\th\circ \lm$ and pick $W\in \S^n$ and proceed  with considering two cases. 
Assume first that  $W \notin T_{\ss \dom g}^2 (X ,  H)$. Employing the inclusion in \eqref{inps} for $g$, we get  $\d^2 g(X)(H\verll W)=\infty$.
On the other hand, by  \eqref{domcs} and Theorem~\ref{fsch}, we obtain 
\begin{equation}\label{dom2g}
 T^2_{\ss\dom g}(X ,H) =\big\{W\in \S^n\big|\;  \lambda'' \big( X ; H , W \big) \in T^2_{\ss\dom\th}\big (\lambda(X) , \lm'(X;H) \big)\big\}.
\end{equation}
This, combined with  $W \notin T_{\ss \dom g}^2 (X ,  H)$, yields  $\lambda^{''} (X ; H , W) \notin T^2_{\ss \dom \theta }\big (\lambda(X) , \lambda' (X;H) \big)$.
Observe from Corolllary~\ref{tansp} and \eqref{domcs} that $H\in T_{\ss \dom g}(X)$ amounts to  $ \lambda' (X ; H) \in T_{\ss \dom \theta}( \lambda(X))$.
By  Proposition~\ref{praepidom}(a), we arrive at 
\begin{equation}\label{domgp}
\dom  \d^2 \theta( \lambda(X))( \lambda (X ; H) \verlm \cdot ) = T_{\ss \dom \th}^2 (\lambda(X) ,  \lambda' (X ; H)).
\end{equation}
Combining these tells us that 
$
\d^2 \theta( \lambda(X)) \big( \lambda' (X ; H) \verlm \lambda^{''} (X ; H , W)\big)=\infty,
$
which in turn justifies \eqref{sochpar1} for every $W \notin T_{\ss \dom g}^2 (X ,  H)$. To verify \eqref{pepi}   for $g$ in this case, consider an arbitrary sequence  $t_k\dn 0$, set $W_k:=W$ for all $k\in \N$,
and observe that   
\begin{eqnarray*}
\infty =\d^2 g(X)( H\verll W) &\le & \liminf_{k\to \infty}  \dfrac{g(X +t_k H +\frac{1}{2}t_k^2 W_k)-g(X)-t_k \d g(X)(W)}{\frac{1}{2}t_k^2}.
\end{eqnarray*} 
This clearly justifies \eqref{pepi} for all $W \notin T_{\ss \dom g}^2 (X ,  H)$. 

Turning now to the case $W \in T_{\ss \dom g}^2 (X ,  H)$, we observe that since $\th$ is parabolically  epi-differentiable  at $\lambda(X)$ for  $ \lambda'(X ; H) $, Proposition~\ref{praepidom}(b) tells us that $\dom \th$ is parabolically derivable at $\lambda(X)$ for  $ \lambda' (X ; H) $. We conclude from Theorem~\ref{fsch}(a) that $\dom g$ is parabolically derivable at $X$ for $H$. In particular, we have 
 \begin{equation}\label{pdfps}
 T_{\ss \dom g}^2 (X,  H)\neq \emptyset.
 \end{equation}
Pick now  $W \in T_{\ss \dom g}^2 (X,  H)$ and  consider then an arbitrary sequence  $t_k\dn 0$. Thus, by the definition of parabolic derivability,   we find a sequence $W_k\to W$ as $k\to \infty$ such that 
\begin{equation}\label{seqz}
X_k:=X+t_k H+\sm t_k^2 W_k= X+t_k H+\sm t_k^2 W+o(t_k^2)\in \dom g \quad \mbox{for all}\;\;k\in \N.
\end{equation}
Moreover, since $\theta$ is parabolically epi-differentiable at $\lambda (X)$ for $\lambda' (X ;H)$,  we find a sequence 
$w_k\to w := \lambda^{''} (X;H,W) $ such that 
\begin{eqnarray*}\label{peg}
&&\d^2 \th (\lambda(X)) ( \lambda' (X ;H) \verlm w)\\
&=& \lim_{k\to \infty}  \dfrac{\theta (\lambda(X) +t_k \lambda' (X ;H)  +\frac{1}{2}t_k^2 w_k)-\th(\lambda(X))-t_k\d \th(\lambda(X))(\lambda' (X ;H))}{\frac{1}{2}t_k^2}.
\end{eqnarray*}
It follows  from  \eqref{dom2g} and $W \in T_{\ss \dom g}^2 (X,  H)$  that  $w \in T^2_{\ss \dom \th}\big (\lambda(X), \lambda' (X ;H) \big)$. Combining this with  \eqref{domgp} tells us that    $ \d^2 \th (\lambda(X)) ( \lambda' (X ;H) \verlm w) <\infty$.
This implies that $y_k:=\lambda(X) +t_k \lambda' (X ;H) +\frac{1}{2}t_k^2 w_k \in \dom \th$ for all $k$ sufficiently large. 
 Using this together with   \eqref{dchain},  \eqref{seqz}, and \eqref{secondexp2},  we obtain 
\begin{eqnarray}
\d^2 g(X)( H\verll W) &\le & \liminf_{k\to \infty}  \dfrac{g(X +t_k H +\frac{1}{2}t_k^2 W_k)-g(X)-t_k \d g(X)(H)}{\sm t_k^2}\nonumber\\
&\le &  \limsup_{k\to \infty}  \dfrac{g(X +t_k H +\frac{1}{2}t_k^2 W_k)-g(X)-t_k \d g(X)(H)}{\sm t_k^2}\nonumber\\
&= &  \limsup_{k\to \infty}  \dfrac{\theta (\lambda(X_k))-\th(\lambda(X))-t_k\d \th(\lambda(X))(\lambda' (X ;H))}{\frac{1}{2}t_k^2} \nonumber\\
& \le &  \limsup_{k\to \infty}  \dfrac{\theta (y_k) -\th(\lambda(X))-t_k\d \th(\lambda(X))(\lambda' (X ;H))}{\frac{1}{2}t_k^2} + \limsup_{k\to \infty}\dfrac{\theta (\lambda(X_k))- \theta(y_k)}{\frac{1}{2}t_k^2} \nonumber\\
&\le & \d^2 \th (\lambda(X)) ( \lambda' (X ;H) \verlm w) +\limsup_{k\to \infty}\ell \| \lambda{''} (X ; H , W) - w_k +\frac{o(t_k^2)}{t_k^2} \| \nonumber\\
&=& \d^2 \th (\lambda(X)) ( \lambda' (X ;H) \verlm w)\label{psgf},
\end{eqnarray}
where $\ell\ge 0$ is a   Lipschitz constant of $\th$ around $\lambda(X)$ relative to its domain.
On the other hand, for any sequence $t_k\dn 0$ and any sequence $W_k\to W$, we can always conclude from \eqref{secondexp2} and  \eqref{dchain} that 
\begin{eqnarray*}
&& \liminf_{k\to \infty} 
  \frac{g(X +t_k H +\frac{1}{2}t_k^2 W_k)-g(X)-t_k \d g(X)(H)}{\sm t_k^2}\\
& =&  \liminf_{k\to \infty}  \dfrac{\theta \big(\lambda(X) +t_k \lambda' (X ;H)  +\frac{1}{2}t_k^2 \lambda^{''} (X;H,W) +o(t_k^2)\big)-\th(\lambda(X))-t_k \d \th(\lambda(X))(\lambda' (X ;H))}{\frac{1}{2}t_k^2}\\
&\ge&  \liminf_{\substack{
   t\dn 0 \\
  w'\to w
  }}   \dfrac{\theta \big(\lambda(X) +t \lambda' (X ;H)  +\frac{1}{2}t^2 w'  \big)-\th(\lambda(X))-t \d \th(\lambda(X))(\lambda' (X ;H))}{\frac{1}{2}t^2}\\
  &=& \d^2 \th (\lambda(X)) ( \lambda' (X ;H) \verlm w).
\end{eqnarray*}
This clearly yields the inequality 
$$
 \d^2 \th (\lambda(X)) ( \lambda' (X ;H) \verlm w) \le \d^2 g(X)(H\verll W)\quad \mbox{with}\;\; w=\lambda^{''} (X;H,W).
$$
Combining this and \eqref{psgf} implies that 
\begin{equation}\label{ps2}
 \d^2 g(X)(H\verll W)= \d^2 \th (\lambda(X)) ( \lambda' (X ;H) \verlm w)
\end{equation}
and that 
$$
\d^2 g(X)( H\verll W) = \lim_{k\to \infty}  \dfrac{g(X +t_k H +\frac{1}{2}t_k^2 W_k)-g(X)-t_k \d g(X)(H)}{\frac{1}{2}t_k^2},
$$
which  in turn prove both \eqref{sochpar1}  and \eqref{pepi} for any $W \in T_{\ss \dom g}^2 (X,H)$. As argued above, we also have 
$ \d^2 \th (\lambda(X)) ( \lambda' (X ;H) \verlm w) <\infty$, which together with \eqref{ps2} tells us that $  \d^2 g(X)( H\verll W)  < \infty $.
This brings us to the inclusion 
$$
T_{\ss \dom g}^2 (X,H)\subset \dom  \d^2 g(X)( H\verll \cdot).
$$
Since the opposite inclusion always holds (see \eqref{inps}), we arrive at \eqref{domofpara}. Combining this and   \eqref{pdfps} indicates that $\dom  \d^2 g(X)( H\verll \cdot)\neq \emptyset$,
and hence shows that $g$ is parabolically epi-differentiable at $ X $ for  $H$. Finally, assume that $\th$ is lsc and convex. By \cite[Corollary~5.2.3]{bl}, the spectral function $g=\th\circ \lm$
is convex. According to \cite[Example~13.62]{rw}, parabolic epi-differentiability of $g$ at $ X $ for  $H$ amounts to parabolic derivability of $\epi g$ at $ (X,g(X)) $ for  $(H, \d g(X)(H))$ and 
$$
\epi  \d^2 g(X)( H\verll \; . \; )=T^2_{\ss\epi g}\big( (X,g(X)), (H, \d g(X)(H))\big).
$$
Since $g$ is convex, it follows from parabolic derivability of $\epi g$ at $ (X,g(X)) $ for  $(H, \d g(X)(H))$ that $T^2_{\ss\epi g}\big( (X,g(X)), (H, \d g(X)(H))\big)$ is a convex set.
The above equality then confirms that  $W\mapsto \d^2 (\th\circ \lm)( X )(H \verll W)$ is a convex function and hence  completes the proof of (a).

Turning into the proof of (b), we conclude 
from  \eqref{spec} that the symmetric function $\th$ satisfies \eqref{spec2}, which means that 
$\th$ can be represented as a composite function of $g$ and the linear mapping $x\mapsto \Diag(x)$ with $x\in \R^n$. We also deduce from the imposed assumption on  $\th$ and the inequality in \eqref{lipei}
that $g$ is locally Lipschitz continuous relative to its domain. Pick $v\in \dom \d\th(\lm(X))=T_{\ss\dom \th}(\lm(X))$ and apply the chain rule in \eqref{dchain4} 
to the representation   \eqref{domcs} of $\dom \th$ to obtain $\Diag(v)\in T_{\ss\dom g}(\Diag(\lm(X)))$. To justify the parabolic epi-differentiability of $\th$ at $\lm(X)$ for $v$, we are going to use \cite[Theorem~4.4(iii)]{ms} by 
  showing that $g$ is parabolically epi-differentiable at $\Diag(\lm(X)) $ for $\Diag(v)$. To this end, it is not hard to see for any $U\in \O^n(X)$ that 
  \begin{equation}\label{psd4}
  \d g\big(\Diag(\lm(X))\big)\big(\Diag(v)\big)=\d g(X)(U\Diag(v)U^\top)
  \end{equation}
  Indeed, since $g$ is orthogonally invariant, we get   for any $U\in \O^n(X)$ that 
  \begin{eqnarray*}
\d g(X)(U\Diag(v)U^\top) &=& \liminf_{\substack{t\dn 0\\ W\to U\Diag(v)U^\top}}\frac{g \big( X+tW\big)-\th \big(X)\big)}{t}\\
&=& \liminf_{\substack{t\dn 0\\ U^\top WU\to \Diag(v)}}\frac{g \big(\Diag(\lambda (X))+tU^\top WU\big)-\th \big(\Diag(\lambda (X))\big)}{t}\\
&\ge &\d g\big(\Diag(\lm(X))\big)\big(\Diag(v)\big).
\end{eqnarray*}
A similar   argument leads us to  $\d g\big(\Diag(\lm(X))\big)\big(\Diag(v)\big)\le \d g(X)(U\Diag(v)U^\top) $ and thus proves \eqref{psd4}.
Similarly, we can show that 
 \begin{equation}\label{psd5}
  \d^2 g\big(\Diag(\lm(X))\big)\big(\Diag(v) \verll W\big)=\d^2 g(X)(U\Diag(v)U^\top \verll UWU^\top), \;\; W\in \S^n.
  \end{equation}
  Since  $g$ is a spectral function, $\dom g$ is a spectral set. This and  $\Diag(v)\in T_{\ss\dom g}(\Diag(\lm(X)))$ 
  tell us that $U\Diag(v)U^\top\in T_{\ss\dom g}(X)$. Since $g$ is parabolically epi-differentiable at $X$ for $U\Diag(v)U^\top$, the equality in \eqref{psd5} confirms that $g$ 
  enjoys the same property at $\Diag(\lm(X))$ for $\Diag(v)$. 
 Combining this,  \eqref{revset2}, and \cite[Theorem~4.4(iii)]{ms} shows that $\th$ is parabolically epi-differentiable at $ \lm(X) $ for $v$ and hence completes the proof.
\end{proof}

We close this section by revealing that the parabolic subderivative of spectral functions is symmetric with respect to a subset of $\P^n$, the set of all $n\times n$ permutation matrices.
This plays a central role in next section when we are going to study parabolic regularity of spectral functions. To this end, recall from Remark~\ref{sysub} that 
if $\th:\R^n\to \oR$ is a symmetric function and $X\in \S^n$ with $\th(\lm(X))$ finite, the subderivative function $\d \th(\lm(X))$ is a symmetric function with respect to 
$\P^n_X$, which is a subset of $\P^n$ consisting of all $n\times n$ block diagonal matrices in the form $Q=\Diag(P_1,\ldots,P_r)$, where $P_m\in \R^{|\al_m|\times |\al_m|}$, $m=1,\ldots,r$, is a  permutation matrix with $\al_m$ taken from \eqref{index} and $r$
being the number of distinct eigenvalues of $X$. Consider now $H\in \S^n$ with $\d\th(\lm(X))(\lm'(X;H))$ finite. Take the orthogonal matrix $U$ from \eqref{specdocom} and $m\in \{1,\ldots,r\}$.
Suppose that $\rho_m$ is the number of distinct eigenvalues of $U^\top_{\al_m}HU_{\al_m}$ and  pick then the index sets $\beta_j^m$ for $j=1,\ldots,\rho_m$ from \eqref{index2}.
Denote by $\P^n_{X,H}$ a subset of $\P^n_X$ consisting of all $n\times n$ matrices with representation   $\Diag(P_1,\ldots,P_r)$ such that  for each $m=1,\ldots,r$, the $|\al_m|\times |\al_m|$ permutation matrix $P_m$ has a block diagonal representation $P_m=\Diag(B^m_1,\ldots,B^m_{\rho_m})$,
where $B_j^m\in \R^{|\beta_j^m|\times |\beta_j^m|}$ is a permutation matrix for any $j=1,\ldots,\rho_m$. It is not hard to see that 
\begin{equation}\label{psym}
Q\lm(X)=\lm(X)\quad \mbox{and}\quad Q\lm'(X;H)=\lm'(X;H)\quad \mbox{for any}\;\; Q\in \P^n_{X,H}.
\end{equation}
\begin{Proposition}\label{psymm}
Assume that $\th:\R^n\to \oR$ is a symmetric function and $X\in \S^n$ with $\th(\lm(X))$ finite and that $H\in \S^n$ with $\d\th(\lm(X))(\lm'(X;H))$ finite. 
Then for any $w\in \R^n $ and any permutation matrix $Q\in \P^n_{X,H}$, we have 
$$
 \d^2 \th (\lambda(X))\big( \lambda' (X ; H ) \verlm Qw\big)= \d^2 \th (\lambda(X))\big( \lambda' (X ; H ) \verlm w\big),
$$
which means that the parabolic subderivative $w\mapsto \d^2 \th (\lambda(X))\big( \lambda' (X ; H ) \verlm w)$ is symmetric with respect to $ \P^n_{X,H}$.
\end{Proposition}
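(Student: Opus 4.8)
The plan is to argue directly from the definition of the parabolic subderivative, using nothing beyond the symmetry of $\th$ and the invariance relations recorded in \eqref{psym}. Fix $w\in\R^n$ and a permutation matrix $Q\in\P^n_{X,H}$, and write out
\begin{equation*}
\d^2 \th(\lambda(X))\big(\lambda'(X;H)\verlm Qw\big)=\liminf_{\substack{t\dn 0\\ w'\to Qw}}\frac{\th\big(\lambda(X)+t\lambda'(X;H)+\sm t^2 w'\big)-\th(\lambda(X))-t\,\d\th(\lambda(X))(\lambda'(X;H))}{\sm t^2}.
\end{equation*}
First I would record that $Q\in\P^n_{X,H}\subset\P^n$ is orthogonal, so $Q^{-1}=Q^\top$ is again a block-diagonal permutation matrix lying in $\P^n_{X,H}$, and that by \eqref{psym} both $Q$ and $Q^{-1}$ fix $\lambda(X)$ and $\lambda'(X;H)$.

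The central step is the change of variables $w'=Qv'$ inside the liminf. Since $Q$ is orthogonal, $w'\to Qw$ holds exactly when $v'=Q^\top w'\to w$, so the two perturbation variables sweep out the same neighborhoods. Invoking the invariance relations, the argument of $\th$ factors as
\begin{equation*}
\lambda(X)+t\lambda'(X;H)+\sm t^2 Qv'=Q\big(\lambda(X)+t\lambda'(X;H)+\sm t^2 v'\big),
\end{equation*}
and symmetry of $\th$ gives $\th\big(Q(\,\cdot\,)\big)=\th(\,\cdot\,)$; meanwhile the subtracted term $t\,\d\th(\lambda(X))(\lambda'(X;H))$ is a constant in $w'$ and is untouched by the substitution. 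Hence the difference quotient for the direction $Qw$ matches, term by term, the difference quotient for the direction $w$, so the two liminf coincide.

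There is no genuine analytic obstacle here; the only point deserving care is the bookkeeping of the joint liminf over $t\dn 0$ and the perturbation variable, exactly as in Remark~\ref{sysub}. To keep that transparent I would present the argument as a two-sided inequality: the substitution $w'=Qv'$ yields $\d^2\th(\lambda(X))(\lambda'(X;H)\verlm w)\ge\d^2\th(\lambda(X))(\lambda'(X;H)\verlm Qw)$, and applying the identical reasoning to $Q^{-1}\in\P^n_{X,H}$ with $Qw$ in place of $w$ produces the reverse inequality. Combining the two gives the claimed equality for every $w\in\R^n$ and every $Q\in\P^n_{X,H}$, which is precisely the asserted symmetry of $w\mapsto\d^2\th(\lambda(X))(\lambda'(X;H)\verlm w)$ with respect to $\P^n_{X,H}$.
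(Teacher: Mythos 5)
Your proposal is correct and follows essentially the same route as the paper's proof: both write the parabolic subderivative as a joint liminf, use the symmetry of $\th$ together with the invariance relations \eqref{psym} to replace the perturbation $w'$ by $Qw'$, and then obtain the reverse inequality by repeating the argument with $Q^{-1}\in \P^n_{X,H}$. The only cosmetic difference is that you note explicitly that the substitution is an exact bijection of perturbation sequences, while the paper conservatively records it as a one-sided inequality before symmetrizing.
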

\begin{proof} Pick $w\in \R^n $ and  $Q\in \P^n_{X,H}$. Since $\th$ is symmetric, it follows from \eqref{psym} that 
\begin{eqnarray*}
&& \d^2 \th (\lambda(X))\big( \lambda' (X ; H ) \verlm w) \\
 &=& \liminf_{\substack{t\dn 0\\w'\to w}}\dfrac{\theta (\lambda(X) +t \lambda' (X ;H)  +\frac{1}{2}t^2 w')-\th(\lambda(X))-t\d \th(\lambda(X))(\lambda' (X ;H))}{\frac{1}{2}t^2}\\
&=& \liminf_{\substack{t\dn 0\\w'\to w}}\dfrac{\theta (\lambda(X) +t \lambda' (X ;H)  +\frac{1}{2}t^2 Qw')-\th(\lambda(X))-t\d \th(\lambda(X))(\lambda' (X ;H))}{\frac{1}{2}t^2}\\
&\ge & \liminf_{\substack{t\dn 0\\ v\to Qw}}\dfrac{\theta (\lambda(X) +t \lambda' (X ;H)  +\frac{1}{2}t^2 v)-\th(\lambda(X))-t\d \th(\lambda(X))(\lambda' (X ;H))}{\frac{1}{2}t^2}\\
&=& \d^2 \th (\lambda(X))\big( \lambda' (X ; H ) \verlm Qw).
\end{eqnarray*}
Similarly,  using \eqref{psym} for the matrix  $Q^{-1}=\Diag(P_1^{-1},\ldots,P_r^{-1})\in \P^n_{X,H}$,
one can conclude that  $ \d^2 \th (\lambda(X))\big( \lambda' (X ; H ) \verlm w)\le \d^2 \th (\lambda(X))\big( \lambda' (X ; H ) \verlm Qw)$, which justifies the claimed equality and hence ends the proof. 
\end{proof}

\section{ Parabolic Regularity of Spectral Functions}\label{sect05}  \sce
This section is devoted  to the study of  parabolic regularity of spectral functions whose central role in second-order variational analysis was revealed recently in \cite{mms2}.
As demonstrated in  \cite{mms2}, parabolic regularity can be viewed as an important {\em second-order regularity} with remarkable consequences among which we should highlight twice 
epi-differentiability of extended-real-valued functions. We begin with recalling the concepts of the second subderivative and parabolic regualrity for functions, respectively. 
Given a function $f: \X \to \oR$ and $\ox \in \X$ with $f(\ox)$ finite, define the parametric  family of 
second-order difference quotients for $f$ at $\ox$ for $\ov\in \X$ by 
\begin{equation*}\label{lk01}
\Delta_t^2 f(\bar x , \ov)(w)=\dfrac{f(\ox+tw)-f(\ox)-t\langle \ov,\,w\rangle}{\frac{1}{2}t^2}\quad\quad\mbox{with}\;\;w\in \X, \;\;t>0.
\end{equation*}
The {second subderivative} of $f$ at $\ox$ for $\ov$   is defined by 
\begin{equation*}\label{ssd}
\d^2 f(\bar x , \ov)(w)= \liminf_{\substack{
   t\dn 0 \\
  w'\to w
  }} \Delta_t^2 f(\ox , \ov)(w'),\;\; w\in \X.
\end{equation*}
The importance of the second subderivative resides in the fact that it can characterize the quadratic growth condition for optimization problems; see \cite[Theorem~13.24]{rw}. So, it is crucial
for many applications to calculate it in terms of the initial data of an optimization problem. This task was carried out for 
  major classes of functions including the convex piecewise linear-quadratic functions in the sense of \cite[Definition~10.20]{rw} in  \cite[Proposition~13.9]{rw}, 
the second-order/ice-cream cone in \cite[Example~5.8]{mms2}, the cone of positive semidefinite symmetric matrices in  \cite[Example~3.7]{ms}, and  the augmented Lagrangian of constrained optimization problems in  \cite[Theorem~8.3]{mms2}. 
We are going to calculate it for the spectral function $g$ in \eqref{spec} when the symmetric function $\th$ therein is convex.

\begin{Definition}[parabolic regularity]\label{pre} A convex function  $f:\X \to \oR$ is parabolically regular at $\ox$ for  $\ov\in \sub f(\ox)$ if 
 for any $w$ such that  $\d^2 f(\bar x , \ov)(w)<\infty  $, there exist, among the sequences $t_k\dn 0$ and $w_k\to w$ with 
$\Delta_{t_k}^2 f(\bar x , \ov)(w_k) \to \d^2 f(\bar x , \ov)(w)$, those with the additional property that 
$
\limsup_{k\to \infty}  {\|w_k-w\|}/{t_k}<\infty.
$
We say that  $f$ is parabolically regular at $\ox$ if it is  parabolically regular at $\ox$ for every $\ov\in \sub f(\ox)$.
A nonempty convex set $C\subset \X$ is said to be parabolically regular at $\ox$  if the indicator function $\dd_C$ is parabolically regular at $\ox$.
\end{Definition}

Parabolic regularity was introduced in \cite[Definition~13.65]{rw} for extended-real-valued functions but was not  scrutinized  therein.  
It was shown in \cite{mms2,ms} that polyhedral convex sets, the second-order/ice-cream cone, the cone of positive semidefinite symmetric matrices are parabolically regular.
One can also find in \cite[Corollary~13.68]{rw} that convex piecewise linear-quadratic functions are   parabolically regular. 
Recall that the critical cone of a convex function $f: \X \to \oR$  at  $\ox$ for $\ov\in \sub f(\ox)$ is defined  by 
\begin{equation*}\label{krit1}
K_f(\ox,\ov)= \{ w \in \X \:|\: \d f(\ox)(w) = \la \ov , w \ra \}.
\end{equation*}
When $f=\dd_C$, where $C$ is a nonempty convex subset of $\X$, the critical cone of $\dd_C$ at $\ox$ for $\ov$ is denoted by $K_C(\ox,\ov)$. In this case, the above definition of the critical cone of a function 
boils down to  the well known concept of the critical cone of a set (see \cite[page~109]{dr}), namely $K_C(\ox,\ov)=T_C(\ox)\cap [\ov]^\perp$  since $\d \dd_C(\ox)=\dd_{T_C(\ox)}$.

The following result is a special case of a more general characterization of parabolic regularity from \cite[Proposition~3.6]{ms} and will be utilized in our approach in this section. 
\begin{Proposition}[characterization of parabolic regularity] \label{parregularity} 
Assume that  $f:\X \to \oR$ is convex, finite at $\ox\in \X$, and  $\ov \in \sub f(\ox)$. Then  the following properties are equivalent.
\begin{itemize}[noitemsep,topsep=2pt]
\item [ \rm {(a)}] $f$ is parabolically regular at $\ox$ for  $\ov $
\item [ \rm {(b)}] For any $w\in K_f (\ox,\ov)$.  
\begin{equation*}\label{pri1}
\d^2 f (\bar x , \ov ) (w) =  \inf_{z\in \X}\big\{\d^2 f(\ox)(w\verl z) -\la z, \ov\ra\big \} 
\end{equation*}
\item [ \rm {(c)}]   For any $w\in \dom \d^2 f (\bar x , \ov )$, there exists a $\oz\in \dom \d^2 f(\ox)(w\verl \cdot)$ such that 
\begin{equation*}\label{pri11}
\d^2 f (\bar x , \ov ) (w) = \d^2 f(\ox)(w\verl \oz) -\la \oz, \ov\ra.
\end{equation*}
\end{itemize}
\end{Proposition}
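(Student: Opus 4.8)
The plan is to prove the three equivalences through the cycle (a)$\Rightarrow$(c)$\Rightarrow$(b)$\Rightarrow$(a), after first recording two facts that hold for \emph{any} convex $f$ with $\ov\in\sub f(\ox)$: that $\d^2 f(\ox,\ov)(w)\ge 0$ for all $w$, and that $\d^2 f(\ox,\ov)(w)=\infty$ whenever $w\notin K_f(\ox,\ov)$, so that $\dom\d^2 f(\ox,\ov)\subset K_f(\ox,\ov)$. Both are immediate from the subgradient inequality and the fact that, for convex $f$, the first-order quotient converges to $\d f(\ox)(w)=f'(\ox;w)$. The algebraic heart of everything is the elementary identity, valid for every $w\in K_f(\ox,\ov)$ (where $\la\ov,w\ra=\d f(\ox)(w)$) and every $t>0$, $z\in\X$,
\[
\Delta_t^2 f(\ox,\ov)\big(w+\tfrac12 t z\big)=\frac{f(\ox+tw+\frac12 t^2 z)-f(\ox)-t\,\d f(\ox)(w)}{\frac12 t^2}-\la\ov,z\ra ,
\]
obtained by substituting $w+\tfrac12 tz$ into $\Delta_t^2$ and splitting off the linear term. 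Passing to the liminf along these parabolic curves as $t\dn 0$, $z'\to z$, and noting that they form a subfamily of the sequences $w'\to w$ entering the second subderivative, turns this identity into the universal inequality $\d^2 f(\ox,\ov)(w)\le \d^2 f(\ox)(w\verl z)-\la\ov,z\ra$ for all $z$, i.e. $\d^2 f(\ox,\ov)(w)\le\inf_z\{\d^2 f(\ox)(w\verl z)-\la\ov,z\ra\}$. This ``easy'' half needs no regularity and is one direction of every equivalence.

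For (a)$\Rightarrow$(c) I would fix $w\in\dom\d^2 f(\ox,\ov)$ (hence $w\in K_f(\ox,\ov)$) and invoke parabolic regularity to select a realizing sequence $t_k\dn 0$, $w_k\to w$ with $\Delta_{t_k}^2 f(\ox,\ov)(w_k)\to\d^2 f(\ox,\ov)(w)$ and $\limsup_k\|w_k-w\|/t_k<\infty$. Setting $z_k:=2(w_k-w)/t_k$, the rate bound makes $(z_k)$ bounded, so after passing to a subsequence $z_k\to\oz$. Rewriting $\Delta_{t_k}^2 f(\ox,\ov)(w_k)$ by the identity above and using that the parabolic subderivative is a liminf over all $t\dn0$, $z'\to z$ gives $\d^2 f(\ox,\ov)(w)+\la\ov,\oz\ra\ge\d^2 f(\ox)(w\verl\oz)$, hence $\d^2 f(\ox,\ov)(w)\ge\d^2 f(\ox)(w\verl\oz)-\la\ov,\oz\ra$; combined with the easy inequality this forces equality, which simultaneously yields (c) (the minimizer is $\oz$, and finiteness of the value places $\oz\in\dom\d^2 f(\ox)(w\verl\cdot)$). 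The implication (c)$\Rightarrow$(b) is then routine: for $w\in K_f(\ox,\ov)\cap\dom\d^2 f(\ox,\ov)$ the attaining $\oz$ gives ``$\ge\inf$'' and the easy inequality gives ``$\le\inf$,'' while for $w\in K_f(\ox,\ov)$ with $\d^2 f(\ox,\ov)(w)=\infty$ the easy inequality forces the infimum to be $\infty$ as well.

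The delicate step is (b)$\Rightarrow$(a), and this is where I expect the main obstacle. Given $w$ with $\d^2 f(\ox,\ov)(w)<\infty$ (so $w\in K_f(\ox,\ov)$), (b) identifies the value with $\inf_z\varphi(z)$, where $\varphi(z):=\d^2 f(\ox)(w\verl z)-\la\ov,z\ra$. To produce the rate-controlled minimizing sequence demanded by Definition~\ref{pre} it suffices to attain this infimum at some $\oz$: choosing $t_k\dn0$, $z_k\to\oz$ realizing the liminf defining $\d^2 f(\ox)(w\verl\oz)$ and setting $w_k:=w+\tfrac12 t_k z_k$, the identity shows $\Delta_{t_k}^2 f(\ox,\ov)(w_k)\to\varphi(\oz)=\d^2 f(\ox,\ov)(w)$ while $\|w_k-w\|/t_k=\tfrac12\|z_k\|\to\tfrac12\|\oz\|<\infty$, exactly the control required. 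Thus the whole burden is the \emph{attainment} of $\inf_z\varphi(z)$ by a bounded minimizer; a naive diagonal extraction from an unbounded minimizing sequence does \emph{not} suffice, since the resulting rate $\|w_k-w\|/t_k$ may diverge, which is precisely the failure mode parabolic regularity excludes. I would secure attainment from the lower semicontinuity of $z\mapsto\d^2 f(\ox)(w\verl z)$ (automatic, being a liminf) together with its convexity for convex $f$, reducing the finite infimum of the proper lsc convex function $\varphi$ to a recession/coercivity argument exploiting the constraint $w\in K_f(\ox,\ov)$. Once attainment is established, the construction above closes the cycle.
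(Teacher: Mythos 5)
Your universal inequality, your proof of (a)$\Rightarrow$(c), and your proof of (c)$\Rightarrow$(b) are all correct, and they are in fact more self-contained than the paper's own treatment: the paper proves only (c)$\Rightarrow$(b) directly (by the same two-case argument you outline, quoting \cite[Proposition~13.64]{rw} for ``$\le$'' and the inclusion $\dom \d^2 f(\ox,\ov)\subset K_f(\ox,\ov)$ for the rest), and it imports the equivalence of (a) and (b) together with (a)$\Rightarrow$(c) wholesale from \cite[Proposition~3.6]{ms}. Your compactness argument for (a)$\Rightarrow$(c) — boundedness of $z_k=2(w_k-w)/t_k$, a subsequential limit $\oz$, then the algebraic identity — is exactly the standard route and is sound.

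The genuine gap is where you feared it, in (b)$\Rightarrow$(a), and it cannot be repaired: attainment of $\inf_z\varphi(z)$ does not follow from lower semicontinuity plus convexity of $\varphi$, and the implication (b)$\Rightarrow$(a), with ``$\inf$'' read literally, is actually false. Let $g(x,u)=x^2e^{-u/x^2}$ on $\{x>0,\,u\ge 0\}$; this function is convex there (its Hessian has determinant $2e^{-2s}(1+s)/x^2>0$ and positive trace, with $s=u/x^2\ge 0$), so
$$
C:=\cl\big\{(x,y,u)\in\R^3\;\big|\;x>0,\;u\ge 0,\;y\ge x^2e^{-u/x^2}\big\}
$$
is a closed convex set containing the origin. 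Take $f=\dd_C$, $\ox=0$, $\ov=(0,-1,0)\in N_C(0)$, and $w=(1,0,0)\in K_f(\ox,\ov)$. The points $\big(t,\,t^2e^{-1/\sqrt{t}},\,t^{3/2}\big)\in C$ give $\d^2 f(\ox,\ov)(w)=0$, since the difference quotients along them equal $2e^{-1/\sqrt{t}}\to 0$ while all quotients are nonnegative ($y\ge 0$ on $C$). On the other hand, one computes $\dom \d^2 f(\ox)(w\verl \cdot)=T^2_C(0,w)=\{z\;|\;z_3\ge 0,\;z_2\ge 2e^{-z_3/2}\}$, so $\varphi(z)=\d^2 f(\ox)(w\verl z)-\la z,\ov\ra=\dd_{T^2_C(0,w)}(z)+z_2$ is proper, lsc, \emph{and convex}, with $\inf\varphi=0$ unattained. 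Thus (b) holds at $w$ (and it holds, with attainment, at every other $w\in K_f(\ox,\ov)=\{(a,0,c)\,|\,a,c\ge 0\}$), yet (c) fails; and (a) fails too, because any sequence realizing $\d^2 f(\ox,\ov)(w)=0$ consists of points $(x_k,y_k,u_k)\in C$ with $y_k/t_k^2\to 0$, which forces $u_k/x_k^2\to\infty$ and hence $\|w_k-w\|/t_k\ge u_k/t_k^2\to\infty$. So no recession or coercivity argument can close your cycle: the failure mode you identified is realized by an honest closed convex set.

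What your attempt actually exposes is that the correct equivalence is (a)$\Leftrightarrow$(c) — which your two arguments prove in full — and that item (b) is equivalent to them only if its infimum is required to be \emph{attained}, i.e., read as a minimum. This is how the proposition is in fact used later in the paper (the proofs of Theorem~\ref{calss} always exhibit an attaining $\Hat W$ before invoking it), and attainment is presumably built into the formulation of the cited source \cite{ms}; the paper's blanket citation hides exactly the step you could not, and provably cannot, supply. Your instinct that ``the whole burden is attainment'' was precisely right — the flaw lies not in your technique but in trying to derive attainment from (b) as stated.
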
 
\begin{proof} The equivalence of (a) and (b)  and the implication (a)$\implies$(c) are taken  from  \cite[Proposition~3.6]{ms}. To prove (c)$\implies$(b), 
take $w\in K_f (\ox,\ov)$. It follows from \cite[Proposition~13.64]{rw} that the inequality `$\le$' in \eqref{pri1} is always satisfied. To prove the opposite inequality,
deduce  first   from the convexity of  $f$ that $\d^2 f (\bar x , \ov ) $ is proper due to $\d^2 f (\bar x , \ov )(0)=0$. By \cite[Proposition~13.5]{rw}, the inclusion 
$\dom \d^2 f (\bar x , \ov )\subset K_f (\ox,\ov)$  is satisfied. If $\d^2 f (\bar x , \ov )(w)=\infty$, the   inequality `$\ge$' in \eqref{pri1} trivially holds. Otherwise, 
$w\in  \dom \d^2 f (\bar x , \ov )$, which together with (c) proves  the   inequality `$\ge$' in \eqref{pri1} and hence completes the proof.
\end{proof}

Our results so far required   that the symmetric function $\th$ in \eqref{spec}  be locally Lipschitz continuous with respect to its domain. In what follows, we 
need to assume further that $\th$ is an lsc convex function. This assumption allows us to use the characterization of the subgradients  of the spectral function $g$ in \eqref{spec},
  recorded   in Proposition~\ref{subsp}. 
We begin   our analysis of the second subderivative of spectral functions by finding a lower estimate for it. 

\begin{Proposition}[lower estimate for second subderivatives] \label{lbs1} Assume that $g:\S^n\to \oR$ has the spectral representation in \eqref{spec} and is lsc and convex and that   $Y\in \sub g(X)$. 
Let $\mu_1>\cdots>\mu_r$ be the distinct eigenvalues of $X$  and $U\in \O^n(X)\,\cap\, \O^n(Y)$.  
Then for any $H\in \S^n$ we have 
\begin{equation}\label{lbss}
\d^2g(X,Y)(H)\ge \d^2\th\big(\lm(X),\lm(Y)\big)\big(\lm'(X;H)\big) + 2\sum_{m=1}^r \big\la \Lm(Y)_{\al_m\al_m},U_{\al_m}^\top  H ( \mu_{m} I  - X)^{\dagger} H U_{\al_m}   \big\ra,
\end{equation}
where  $\al_m$, $m=1,\ldots,r$, are defined in \eqref{index}.
\end{Proposition}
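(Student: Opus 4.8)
The plan is to argue directly from the definition of the second subderivative, choosing sequences that realize the defining liminf and bounding the resulting difference quotients from below. Since $Y\in\sub g(X)$, Proposition~\ref{subsp} supplies an orthogonal matrix $U\in\O^n(X)\cap\O^n(Y)$ with $\lm(Y)\in\sub\th(\lm(X))$ and $\Lm(Y):=U^\top Y U=\Diag(\lm(Y))$; from $Y=U\Lm(Y)U^\top$ and the identity \eqref{dfa} one then has $\la Y,E\ra=\la\Lm(Y),U^\top E U\ra=\sum_{m=1}^r\la\Lm(Y)_{\al_m\al_m},U_{\al_m}^\top E U_{\al_m}\ra$ for every $E\in\S^n$. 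Assuming $\d^2 g(X,Y)(H)<\infty$ (otherwise the inequality is trivial), I would fix sequences $t_k\dn 0$ and $H_k\to H$ along which $\big[g(X+t_kH_k)-g(X)-t_k\la Y,H_k\ra\big]/\sm t_k^2$ converges to $\d^2 g(X,Y)(H)$.

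First I would expand the eigenvalues. Applying \eqref{secondexp1} with the perturbation $t_kH_k\to 0$ and using the positive homogeneity of each $\lm_{\ell_i}$, I obtain $\lm(X+t_kH_k)=\lm(X)+t_k\xi_k+O(t_k^3)$, where the block $\al_m$ of $\xi_k$ equals $\lm(A_k^m)$ with $A_k^m:=U_{\al_m}^\top H_k U_{\al_m}+t_k\,U_{\al_m}^\top H_k(\mu_m I-X)^{\dagger} H_k U_{\al_m}$; since $H_k\to H$, this forces $\xi_k\to\lm'(X;H)$. Writing $B_k^m:=U_{\al_m}^\top H_k U_{\al_m}$ and $C_k^m:=U_{\al_m}^\top H_k(\mu_m I-X)^{\dagger} H_k U_{\al_m}$, so that $A_k^m=B_k^m+t_kC_k^m$, the crucial step is to feed this into \emph{Fan's inequality} \eqref{fani} block by block. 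Because $\al_m$ is a contiguous block of sorted indices and $U$ diagonalizes $Y$ with globally nonincreasing spectrum, the chunk $\lm(Y)_{\al_m}$ is already ordered, so $\lm(\Lm(Y)_{\al_m\al_m})=\lm(Y)_{\al_m}$; applying \eqref{fani} to the symmetric matrices $\Lm(Y)_{\al_m\al_m}$ and $A_k^m$ yields $\la\Lm(Y)_{\al_m\al_m},B_k^m\ra\le\la\lm(Y)_{\al_m},\lm(A_k^m)\ra-t_k\la\Lm(Y)_{\al_m\al_m},C_k^m\ra$. Summing over $m$ gives $\la Y,H_k\ra\le\la\lm(Y),\xi_k\ra-t_k\sum_{m=1}^r\la\Lm(Y)_{\al_m\al_m},C_k^m\ra$.

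Finally I would assemble the estimate. Substituting this bound into the numerator and dividing by $\sm t_k^2$ bounds the $g$-difference quotient below by
$$\frac{\th(\lm(X)+t_k\xi_k+O(t_k^3))-\th(\lm(X))-t_k\la\lm(Y),\xi_k\ra}{\sm t_k^2}+2\sum_{m=1}^r\la\Lm(Y)_{\al_m\al_m},C_k^m\ra.$$
Absorbing the cubic remainder into the direction by setting $\zeta_k:=\xi_k+O(t_k^2)\to\lm'(X;H)$ (which leaves $\th(\lm(X+t_kH_k))=\th(\lm(X)+t_k\zeta_k)$ unchanged and perturbs the linear term only by $O(t_k)$ after division), the first term becomes $\Delta^2_{t_k}\th(\lm(X),\lm(Y))(\zeta_k)+O(t_k)$, whose liminf is at least $\d^2\th(\lm(X),\lm(Y))(\lm'(X;H))$ by the definition of the second subderivative. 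Since $C_k^m\to U_{\al_m}^\top H(\mu_m I-X)^{\dagger} H U_{\al_m}$, the second term converges to the correction sum in \eqref{lbss}, and taking the limit of the whole lower bound yields the claim.

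The main obstacle is the block-wise Fan inequality step: one must recognize that Torki's second-order expansion already packages the curvature contribution $U_{\al_m}^\top H(\mu_m I-X)^{\dagger} H U_{\al_m}$ inside the matrix $A_k^m$ whose spectrum controls $\lm(X+t_kH_k)$, and that pairing $\Lm(Y)_{\al_m\al_m}$ against $A_k^m$ through \eqref{fani} produces precisely the stated correction term rather than some uncontrolled remainder. It is worth emphasizing that this lower bound needs no control on $\|H_k-H\|/t_k$; such boundedness is exactly what the reverse inequality, requiring parabolic regularity, will later demand.
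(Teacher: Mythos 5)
Your proposal is correct and takes essentially the same route as the paper's proof: both decompose the second-order difference quotient of $g$ into the corresponding quotient of $\th$ evaluated at the eigenvalue difference-quotient direction (your $\zeta_k$ is exactly the paper's $\Delta_{t_k}\lm(X)(H_k)$) plus a linear mismatch term $\big[\la \lm(Y),\cdot\ra-\la Y,\cdot\ra\big]/\sm t_k$, and both control that mismatch by combining the expansion \eqref{secondexp1} with blockwise Fan's inequality, which produces precisely the curvature correction $2\sum_{m=1}^r\la \Lm(Y)_{\al_m\al_m},U_{\al_m}^\top H(\mu_m I-X)^{\dagger}HU_{\al_m}\ra$. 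The only cosmetic difference is that you fix sequences realizing the liminf while the paper bounds arbitrary sequences before passing to the limit; the substance is identical.
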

\begin{proof} Let  $H\in \S^n$ and   pick sequences $H_k\to H$ and $t_k\dn 0$. Setting $\Delta_{t_k}\lm(X)(H_k):=\big(\lm(X+t_kH_k)-\lm(X)\big)/t_k$, we get 
\begin{eqnarray*}
\Delta^2_{t_k} g\big(X,Y\big)(H_k)&=& \frac{\th\big(\lm(X+t_kH_k)\big)- \th\big(\lm(X)\big)-t_k\big\la Y,H_k\big\ra}{\sm t_k^2}\\
&=&  \frac{\th\big(\lm(X)+t_k\Delta_{t_k}\lm(X)(H_k)\big)- \th\big(\lm(X)\big)-t_k\big\la \lm(Y),\Delta_{t_k}\lm(X)(H_k)\big\ra}{\sm t_k^2}\\
&&+ \frac{\big\la \lm(Y),\Delta_{t_k}\lm(X)(H_k)\big\ra -\big\la Y,H_k\big\ra}{\sm t_k}\\
&=& \Delta^2_{t_k} \th\big(\lm(X),\lm(Y)\big)(\Delta_{t_k}\lm(X)(H_k)\big)+ \frac{\big\la \lm(Y),\Delta_{t_k}\lm(X)(H_k)\big\ra -\big\la Y,H_k\big\ra}{\sm t_k}.
\end{eqnarray*}
It follows from   $Y=U\Lm(Y)U^\top$ that
\begin{equation}\label{sdcy}
\big\la Y,H_k\big\ra= \big\la U\Lm(Y)U^\top,H_k\big\ra=\big\la \Lm(Y),U^\top H_kU\big\ra=  \sum_{m=1}^r \big\la \Lm(Y)_{\al_m\al_m},U_{\al_m}^\top  H_kU_{\al_m}\big\ra.
\end{equation}
On the other hand, it results from  \eqref{secondexp1} and     Fan's inequality that 
\begin{eqnarray*}
\big\la \lm(Y),\Delta_{t_k}\lm(X)(H_k)\big\ra &=& \sum_{m=1}^r\sum_{j\in \al_m} \frac{\lm_j(Y) \big( \lm_j(X+t_kH_k)-\lm_j(X)\big)}{t_k}\\
&=&\sum_{m=1}^r\sum_{j\in \al_m} \lm_j(Y) \lm_{\ell_j}\big(U_{\al_m}^\top H_kU_{\al_m}+t_kU_{\al_m}^\top  H_k( \mu_m I-X)^\dagger H_kU_{\al_m}\big) + O(t_k^2)\\
&\ge & \sum_{m=1}^r\big\la \Lm(Y)_{\al_m\al_m}, U_{\al_m}^\top H_kU_{\al_m}+t_kU_{\al_m}^\top  H_k( \mu_m I-X)^\dagger H_kU_{\al_m}\big\ra + O(t_k^2).\\
\end{eqnarray*}
Combining this with \eqref{sdcy} brings us to 
\begin{eqnarray*}
\frac{\big\la \lm(Y),\Delta_{t_k}\lm(X)(H_k)\big\ra -\big\la Y,H_k\big\ra}{\sm t_k}&\ge &  2\sum_{m=1}^r\big\la \Lm(Y)_{\al_m\al_m},  U_{\al_m}^\top  H_k( \mu_m I-X)^\dagger H_kU_{\al_m}\big\ra + O(t_k).
\end{eqnarray*}
This leads us to the estimate 
\begin{eqnarray*}
\Delta^2_{t_k} g\big(X,Y\big)(H_k)&\ge & \Delta^2_{t_k} \th\big(\lm(X),\lm(Y)\big)(\Delta_{t_k}\lm(X)(H_k)\big) \\
&& + 2\sum_{m=1}^r\big\la \Lm(Y)_{\al_m\al_m},  U_{\al_m}^\top  H_k( \mu_m I-X)^\dagger H_kU_{\al_m}\big\ra + O(t_k),
\end{eqnarray*}
which in turn clearly justifies the lower estimate in \eqref{lbss} for the second subderivative of $g$ at $X$ for $Y$
since $\Delta_{t_k}\lm(X)(H_k)\to \lm'(X;H)$ as $k\to \infty$. 
\end{proof}

We proceed with a result about the critical cone of spectral functions. 

\begin{Proposition}[critical cone of spectral functions] \label{crit} Assume that $g:\S^n\to \oR$ has the spectral representation in \eqref{spec} and is   lsc and convex and that   $Y\in \sub g(X)$. 
Let $\mu_1>\cdots>\mu_r$ be the distinct eigenvalues of $X$. Then, we have   $H\in K_g(X,Y)$ if and only if 
  $\lm'(X;H)\in K_\theta\big(\lm(X),\lm(Y)\big)$ and the matrices $\Lm(Y)_{\al_m\al_m}$ and $U_{\al_m}^\top  HU_{\al_m}$ have a simultaneous ordered spectral decomposition for any $m=1,\ldots,r$ with $\al_m$ taken from \eqref{index}
  and $U\in \O^n(X)\,\cap\, \O^n(Y)$. 
\end{Proposition}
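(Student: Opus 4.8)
The plan is to reduce everything to spectral data via the subderivative chain rule of Theorem~\ref{specsubd} and then exploit Fan's inequality \eqref{fani} block by block. First I would record that since $g$ is lsc and convex, so is $\th$ (by \cite[Corollary~5.2.3]{bl}), and that $Y\in\sub g(X)$ together with Proposition~\ref{subsp} produces $U\in\O^n(X)\cap\O^n(Y)$ with $\lm(Y)\in\sub\th(\lm(X))$; in particular $\sub\th(\lm(X))\neq\emptyset$, so Theorem~\ref{specsubd} applies and yields $\d g(X)(H)=\d\th(\lm(X))(\lm'(X;H))$ for every $H\in\S^n$. Consequently the membership $H\in K_g(X,Y)$ becomes the equation $\d\th(\lm(X))(\lm'(X;H))=\la Y,H\ra$.

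The two computations I need are as follows. Writing $Y=U\Lm(Y)U^\top$ and using \eqref{dfa}, I obtain
$$\la Y,H\ra=\la\Lm(Y),U^\top HU\ra=\sum_{m=1}^r\la\Lm(Y)_{\al_m\al_m},U_{\al_m}^\top HU_{\al_m}\ra,$$
exactly as in \eqref{sdcy}. Since $\Lm(Y)_{\al_m\al_m}=\Diag(\lm(Y)_{\al_m})$ and the block $\lm(Y)_{\al_m}$ is a contiguous (hence nonincreasingly ordered) subvector of $\lm(Y)$, its eigenvalue vector is $\lm(Y)_{\al_m}$ itself, so Fan's inequality \eqref{fani} in $\S^{|\al_m|}$ gives
$$\la\Lm(Y)_{\al_m\al_m},U_{\al_m}^\top HU_{\al_m}\ra\le\la\lm(Y)_{\al_m},\lambda(U_{\al_m}^\top HU_{\al_m})\ra,$$
with equality precisely when $\Lm(Y)_{\al_m\al_m}$ and $U_{\al_m}^\top HU_{\al_m}$ admit a simultaneous ordered spectral decomposition. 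Summing over $m$ and recalling $\lm'(X;H)=(\lambda(U_{\al_1}^\top HU_{\al_1}),\ldots,\lambda(U_{\al_r}^\top HU_{\al_r}))$ from \eqref{dirla} yields the key inequality $\la Y,H\ra\le\la\lm(Y),\lm'(X;H)\ra$, with equality if and only if the simultaneous ordered decomposition holds for every $m$.

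For the forward direction I would combine this with the convex-analytic fact that $\d\th(\lm(X))(v)\ge\la\lm(Y),v\ra$ for all $v$, valid because $\lm(Y)\in\sub\th(\lm(X))$ and the subderivative of a convex function is the support function of its subdifferential (cf. \cite[Theorem~8.30]{rw}). Applying this at $v=\lm'(X;H)$ sandwiches
$$\la\lm(Y),\lm'(X;H)\ra\le\d\th(\lm(X))(\lm'(X;H))=\la Y,H\ra\le\la\lm(Y),\lm'(X;H)\ra,$$
forcing all three quantities to coincide. The first equality says $\lm'(X;H)\in K_\th(\lm(X),\lm(Y))$, while equality in the summed Fan inequality forces equality in each of its $r$ terms, hence the simultaneous ordered spectral decomposition of $\Lm(Y)_{\al_m\al_m}$ and $U_{\al_m}^\top HU_{\al_m}$ for every $m$. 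The converse simply runs this chain backwards: the two hypotheses turn both outer inequalities into equalities, whence $\d g(X)(H)=\d\th(\lm(X))(\lm'(X;H))=\la\lm(Y),\lm'(X;H)\ra=\la Y,H\ra$, i.e. $H\in K_g(X,Y)$. The only delicate point, and the step I would treat most carefully, is the block-by-block equality analysis of Fan's inequality—verifying that $\lambda(\Lm(Y)_{\al_m\al_m})=\lm(Y)_{\al_m}$ so that the right-hand side genuinely assembles into $\la\lm(Y),\lm'(X;H)\ra$, and that a strict inequality in a single block would break the global equality; everything else is bookkeeping with \eqref{sdcy} and the chain rule.
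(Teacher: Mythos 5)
Your proposal is correct and follows essentially the same route as the paper's own proof: Proposition~\ref{subsp} to get $U\in\O^n(X)\cap\O^n(Y)$ and $\lm(Y)\in\sub\th(\lm(X))$, the chain rule \eqref{dchain} of Theorem~\ref{specsubd}, the block decomposition \eqref{sdcy}, and the block-by-block equality analysis of Fan's inequality \eqref{fani} combined with the subgradient inequality $\d\th(\lm(X))(v)\ge\la\lm(Y),v\ra$, closing the cycle of inequalities in both directions. The only differences are cosmetic (you cite \cite[Theorem~8.30]{rw} where the paper cites \cite[Exercise~8.4]{rw}, and you make explicit the observation that $\lambda\big(\Lm(Y)_{\al_m\al_m}\big)=\lm(Y)_{\al_m}$, which the paper uses implicitly).
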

\begin{proof} By \cite[Corollary~5.2.3]{bl}, the symmetric function  $\th$ in \eqref{spec} is lsc and convex. Thus, we find $U\in \O^n(X)\,\cap\, \O^n(Y)$ and get $\lm(Y)\in \sub \th \big(\lm(X)\big)$ due to Proposition~\ref{subsp}.
Pick  $H\in K_g(X,Y)$ and deduce from the definition of the critical cone that $\d g(X)(H)=\big\la Y,H\big\ra$. We then conclude  from $Y=U\Lm(Y)U^\top$ with $\Lm(Y)=\Diag(\lm(Y)\big)$ and Fan's inequality that 
\begin{eqnarray}
 \big\la Y,H\big\ra &=&  \big\la \Lm(Y),U^\top HU\big\ra=  \sum_{m=1}^r \big\la \Lm(Y)_{\al_m\al_m},U_{\al_m}^\top  HU_{\al_m}\big\ra\nonumber\\
&\le &  \sum_{m=1}^r \big\la \lm(Y)_{\al_m}, \lm(U^\top_{\al_m} H U_{\al_m})\big\ra =\big\la \lm(Y),\lm'(X;H)\big\ra\nonumber\\
&\le & \d \th\big(\lm(X)\big)\big(\lm'(X;H)\big)=\d g(X)(H)\label{cri4},
\end{eqnarray}
where the last inequality results from $\lm(Y)\in \sub \th \big(\lm(X)\big)$, \cite[Exercise~8.4]{rw}, and the convexity of $\th$ and where the last equality comes from \eqref{dchain}.
These relationships clearly imply that $\d \th\big(\lm(X)\big)\big(\lm'(X;H)\big)=\big\la \lm(Y),\lm'(X;H)\big\ra$, meaning that $\lm'(X;H)\in K_\theta (\lm(X),\lm(Y) )$, and that 
$\big\la \Lm(Y)_{\al_m\al_m},U_{\al_m}^\top  HU_{\al_m}\big\ra=\big\la \lm(Y)_{\al_m}, \lm(U^\top_{\al_m} H U_{\al_m})\big\ra $  for any $m=1,\ldots,r$. By Fan's inequality, these equalties are  equivalent to 
saying that the matrices $\Lm(Y)_{\al_m\al_m}$ and $U_{\al_m}^\top  HU_{\al_m}$ have a simultaneous ordered spectral decomposition for any $m=1,\ldots,r$. 

To prove the opposite claim, assume $\lm'(X;H)\in K_\theta\big(\lm(X),\lm(Y)\big)$ and the matrices $\Lm(Y)_{\al_m\al_m}$ and $U_{\al_m}^\top  HU_{\al_m}$ have a simultaneous ordered spectral decomposition for any $m=1,\ldots,r$. 
The latter tells us via Fan's inequality that $\big\la \Lm(Y)_{\al_m\al_m},U_{\al_m}^\top  HU_{\al_m}\big\ra=\big\la \lm(Y)_{\al_m}, \lm(U^\top_{\al_m} H U_{\al_m})\big\ra $  for any $m=1,\ldots,r$.
Moreover, the former yields $\d \th\big(\lm(X)\big)\big(\lm'(X;H)\big)=\big\la \lm(Y),\lm'(X;H)\big\ra$. Taking these into account demonstrates that both inequalities in \eqref{cri4} are indeed equalities.
This leads us to $\d g(X)(H)=\big\la Y,H\big\ra $, which implies that $H\in K_g(X,Y)$.
\end{proof}

The characterization of the critical cone of spectral functions, obtained above, is a generalization of a similar result, established recently in \cite[Proposition~4]{cd} for 
the spectral function $g$ from \eqref{spec} when the symmetric function $\th$ therein is a polyhedral function, meaning a function that  its epigraph is a polyhedral convex set.
To obtain a full characterization of the critical cone of spectral functions, we should know when the matrices $\Lm(Y)_{\al_m\al_m}$ and $U_{\al_m}^\top  HU_{\al_m}$ in 
Proposition~\ref{crit} have a simultaneous ordered spectral decomposition since the critical cone $K_\theta\big(\lm(X),\lm(Y)\big)$ can be often calculated rather easily. 
While this remains   an open question for now and will be a subject of our future research, we show by an example below the possible role that   the condition 
on $\Lm(Y)_{\al_m\al_m}$ and $U_{\al_m}^\top  HU_{\al_m}$ is playing  in the calculation of  the critical cone of spectral functions. Indeed, if 
the matrices $\Lm(Y)_{\al_m\al_m}$ and $U_{\al_m}^\top  HU_{\al_m}$ have a simultaneous ordered spectral decomposition, it is possible to 
show that the matrix $U_{\al_m}^\top  HU_{\al_m}$ has a block diagonal structure; see \eqref{spcr4} below and the discussion afterward to see why this can happen in the case $g=\dd_{\S^n_+}$.

\begin{Example}{\rm  Set $g=\dd_{\S^n_+}$.  Clearly, $g$ is a spectral function satisfying \eqref{spec} with $\th=\dd_{\R^n_+}$. 
 Take $Y\in N_{\S^n_+}(X)$ and observe from Proposition~\ref{subsp} that $Y=U\Diag(\lm(Y))U^\top$, where $\lm(Y)\in N_{\R^n_+}(\lm(X))$ and $U\in \O^n(X)\cap \O^n(Y)$. 
Our goal is to calculate $K_{\S^n_+}(X,Y)$ using the characterization of this cone from Proposition~\ref{crit}. Take $H\in K_{\S^n_+}(X,Y)$, assume that 
$\mu_1>\cdots>\mu_r$ are the distinct eigenvalues of $X$, and pick the constants  $\al_m$ for any $m=1,\ldots,r$  from \eqref{index}. By Proposition~\ref{crit}, 
we conclude that  $\lm'(X;H)\in K_ {\R^n_+}\big(\lm(X),\lm(Y)\big)$ and that the matrices $\Lm(Y)_{\al_m\al_m}$ and $U_{\al_m}^\top  HU_{\al_m}$ have a simultaneous ordered spectral decomposition for any $m=1,\ldots,r$.
The former is equivalent to the conditions 
\begin{equation}\label{spcr1}
w\in T_{\R^n_+}\big(\lm(X))\quad \mbox{and}\quad  \quad \big\la \lm(Y), w\big\ra=0\quad \mbox{with}\;\;w=(w_1,\ldots,w_n):=\lm'(X;H).
\end{equation}
Moreover, the inclusion $\lm(Y)\in N_{\R^n_+}(\lm(X))$ amounts to 
\begin{equation}\label{spcr2}
\sum_{i=1}^n \lm_i(X)\lm_i(Y)  =0, \quad \lm(X)=\big(\lm_1(X),\ldots,\lm_n(X)\big)\in \R^n_+, \quad \lm(Y)=\big(\lm_1(Y),\ldots,\lm_n(Y)\big)\in \R^n_-.
\end{equation}
Define the index sets 
$$
\kappa_X:=\big\{ i\in \{1,\ldots,n\}\, \big|\, \lm_i(X)>0\big\}\quad \mbox{and}\quad \tau_X:=\big\{ i\in \{1,\ldots,n\}\, \big|\, \lm_i(X)=0\big\},
$$
and 
$$
\kappa_Y:=\big\{ i\in \{1,\ldots,n\}\, \big|\, \lm_i(Y)=0\big\}\quad \mbox{and}\quad \tau_Y:=\big\{ i\in \{1,\ldots,n\}\, \big|\, \lm_i(Y)<0\big\}.
$$
It is easy to see from the definition of the index set $\al_r$ in \eqref{index} that $\tau_X=\al_r$ and from 
  \eqref{spcr2} that 
\begin{equation}\label{spcr3}
\kappa_X \subset \kappa_Y\quad \mbox{and}\quad \tau_Y\subset \tau_X,
\end{equation}
and to conclude  from \eqref{spcr1} and \eqref{spcr2} that 
\begin{equation}\label{spcr5}
w_i \in 
\begin{cases}
\R & \mbox{if}\;\; i\in \kappa_X,\\
\R_+& \mbox{if}\;\; i\in \tau_X\setminus \tau_Y,\\
\{0\}& \mbox{if}\;\; i\in \tau_Y.\\
\end{cases}
\end{equation}
Take $m\in \{1,\ldots,r-1\}$ and observe from the first inclusion in \eqref{spcr3} that $\Lm(Y)_{\al_m\al_m}=0$.
In this case, we will not benefit further from the fact that the matrices $\Lm(Y)_{\al_m\al_m}$ and $U_{\al_m}^\top  HU_{\al_m}$ have a simultaneous ordered spectral decomposition.
It remains to take a closer look into the case $m=r$. Since $\Lm(Y)_{\al_r\al_r}$ and $U_{\al_r}^\top  HU_{\al_r}$ have a simultaneous ordered spectral decomposition, 
we find an orthogonal matrix $  Q_r\in  \O^{|\al_r|}$ such that 
\begin{equation}\label{spcr4}
\Lm(Y)_{\al_r\al_r}=  Q_r\Lm(Y)_{\al_r\al_r}  Q_r^\top\quad \mbox{and}\quad  U_{\al_r}^{\top} H U_{\al_r}=  Q_r\Lambda(U_{\al_r}^\top  H U_{\al_r})  Q_r^\top.
\end{equation}
Similar to  \eqref{index}, define the index sets $\{\rho_r^{\nu}\}_{\nu=1}^\ell$, where $\ell$ in the number of distinct eigenvalues of $\Lm(Y)_{\al_r\al_r}$, by 
$$
\begin{cases}
\lm_i(Y)=\lm_j(Y)& \mbox{if}\;\; i,j\in \rho_r^{\nu}\\
\lm_i(Y)>\lm_j(Y)& \mbox{if}\;\; i\in \rho_r^{\nu}, \;\; j\in \rho_r^k \;\; \mbox{with}\;\; \nu<k.
\end{cases}
$$
Thus, since $\rho_r^j\subset \al_r=\tau_X$ for any $j\in\{1,\ldots,\ell\}$,  we deduce from the third condition in \eqref{spcr2} that 
\begin{equation}\label{spcr6}
\rho_r^1=\tau_X\cap  \kappa_Y\quad \mbox{and}\quad \bigcup_{j=2}^\ell \rho_r^j= \tau_X \cap \tau_Y=\tau_Y.
\end{equation}
It is not hard to see from the first equality in \eqref{spcr4} (see \cite[Proposition~2.4]{d12} for more detail)  that $  Q_r$ has a block diagonal representation as 
$$
Q_r=\Diag(Q_r^1,\ldots, Q_r^{\ell})=
\begin{pNiceMatrix}
Q_r^1  &0             & \Cdots        & 0         \\
0         & Q_r^2     & \Ddots        & \Vdots \\
\Vdots &\Ddots     & Q_r^{\ell-1} &  0        \\
0         &\Cdots     & 0                & Q_r^{\ell}
\CodeAfter \line{2-2}{3-3} 
\end{pNiceMatrix}
\quad \mbox{with}\;\; Q_r^j\in  \O^{|\rho_r^j|}, \;\; j=1,\ldots,\ell. 
$$
This, coupled with the second equality in \eqref{spcr4}, implies  that $U_{\al_r}^{\top} H U_{\al_r}$ has a similar block diagonal representation as
$$
U_{\al_r}^{\top} H U_{\al_r}=\Diag(H_r^1,\ldots,H_r^\ell)=
\begin{pNiceMatrix}
H_r^1  &0             & \Cdots        & 0         \\
0         & H_r^2     & \Ddots        & \Vdots \\
\Vdots &\Ddots     & H_r^{\ell-1} &  0        \\
0         &\Cdots     & 0                & H_r^{\ell}
\CodeAfter \line{2-2}{3-3} 
\end{pNiceMatrix}
\quad \mbox{with}\;\; H_r^j\in  \S^{|\rho_r^j|}, \;\; j=1,\ldots,\ell. 
$$
A direct calculation shows that $H_r^j=U_{\rho_r^j}^{\top} H U_{\rho_r^j}$ for any $ j=1,\ldots,\ell$. Moreover, it follows from \eqref{spcr5},  \eqref{spcr6},
and the definition of $w_i$ from \eqref{spcr1}  that 
$$
U_{\rho_r^1}^{\top} H U_{\rho_r^1}\in \S^{|\rho_r^1|}_+\quad \mbox{and}\quad U_{\rho_r^j}^{\top} H U_{\rho_r^j}=0\;\;\mbox{ for any}\;\;  j=2,\ldots,\ell,
 $$
which in turn leads us to the representation 
\begin{equation}\label{last}
U_{\al_r}^{\top} H U_{\al_r}=
\begin{pNiceMatrix}[margin,hvlines]
\Block{2-2} {U_{\rho_r^1}^{\top} H U_{\rho_r^1}}    &&    \Block{2-1} {\bigzero} \\
 \hspace*{1cm} &  \\
\Block{1-2} {\bigzero}     &&  \Block{1-1} {\bigzero} 
\end{pNiceMatrix}.
\end{equation}
This gives us  the inclusion 
\begin{equation}\label{last2}
K_{\S^n_+}(X,Y) \subset \Big\{ H\in \S^n\,\Big|\, U_{\rho_r^1}^{\top} H U_{\rho_r^1}\in \S^{|\rho_r^1|}_+, \; \; U_{\tau_Y}^{\top} H U_{\tau_Y}=0,\;\;  U_{\rho_r^1}^{\top} H U_{\tau_Y}=0\Big\}.
\end{equation}
 We claim now that the inclusion above  becomes equality. To prove it, take a matrix $H$ from the right-hand side of the above inclusion.
To justify $H\in K_{\S^n_+}(X,Y)$, we first show that $\lm'(X;H)\in K_ {\R^n_+}\big(\lm(X),\lm(Y)\big)$, which is equivalent to proving \eqref{spcr1}.
By the selection of $H$, the components of the vector $w=\lm'(X;H)$ enjoy the properties in \eqref{spcr5}, since $\al_r=\tau_X$ and $\rho_r^1=\tau_X\cap  \kappa_Y=\tau_X\setminus \tau_Y$
due to \eqref{spcr3}. Now, it is not hard to see that $w$ satisfies all the conditions in \eqref{spcr1},  confirming the inclusion $\lm'(X;H)\in K_ {\R^n_+}\big(\lm(X),\lm(Y)\big)$.  To finish the proof, 
it suffices, according to Proposition~\ref{crit}, to demonstrate that  the matrices $\Lm(Y)_{\al_m\al_m}$ and $U_{\al_m}^\top  HU_{\al_m}$ have a simultaneous ordered spectral decomposition for any $m=1,\ldots,r$.
Take $m\in \{1,\ldots, r-1\}$ and observe that if $i\in \al_m\subset \kappa_X$, it follows from the first inclusion in \eqref{spcr3} that $\lm_i(Y)=0$. This implies    that $\Lm(Y)_{\al_m\al_m}=0$ for any such an $m$,
which in turn tells us that $\Lm(Y)_{\al_m\al_m}$ and  $U_{\al_m}^\top  HU_{\al_m}$ have a simultaneous ordered spectral decomposition. It remains to consider the case $m=r$. 
We know from the selection of $H$ that $U_{\al_r}^\top  HU_{\al_r}$ has the representation in \eqref{last}. According to \eqref{spcr6}, the diagonal matrix $\Lm(Y)_{\al_r\al_r}$
has a similar block structure as \eqref{last}, given by 
$$
\Lm(Y)_{\al_r\al_r}=
\begin{pNiceMatrix}[margin,hvlines]
\Block{2-2} {\bigzero}    &&    \Block{2-1} {\bigzero} \\
 \hspace*{1cm} &  \\
\Block{1-2} {\bigzero}     &&  \Block{1-1} {\Lm(Y)_{\gamma\gamma}}
\end{pNiceMatrix}
\quad \mbox{with} \;\; \gamma:=\al_r\setminus \rho_r^1.
$$
Taking into account this representation and \eqref{last} tells us that $\Lm(Y)_{\al_r\al_r}$ and  $U_{\al_r}^\top  HU_{\al_r}$ have a simultaneous ordered spectral decomposition, and hence finishes 
the proof of the opposite inclusion in \eqref{last2}. We should add here that the same description as \eqref{last2} for $K_{\S^n_+}(X,Y)$ was obtained in \cite{cs} using a different approach
and without appealing to a characterization of the latter cone obtained in Proposition~\ref{crit}. 

Note that the analysis above for the case of $\dd_{\S^n_+}$ clearly illustrates the essential role that the simultaneous ordered spectral decompositions of 
$\Lm(Y)_{\al_m\al_m}$ and $U_{\al_m}^\top  HU_{\al_m}$   play in finding the critical cone of spectral functions.  
To shed more light into the role of  the later condition,  consider the case $n=3$, $X=\Diag(1,0,0)$ and $Y=\Diag(0,0,-1)$. In this case, we get 
$r=2$, $\al_1=\{1\}$, and $\al_2=\{2,3\}$. Moreover, we have $\kappa_X=\{1\}$, $\tau_X=\{2,3\}$, $\kappa_Y=\{1,2\}$, $\tau_Y=\{3\}$,   and 
$\rho_r^1=\{2\}$. According to \eqref{last2}, the symmetric matrix $H$ belongs to $K_{\S^n_+}(X,Y)$ if and only if it has 
a representation of the form 
$$
\begin{pmatrix}
\bigstar & \bigstar & \bigstar\\
\bigstar & a & 0\\
\bigstar & 0 & 0
\end{pmatrix},
$$
where $a\in \R_+$ and the $\bigstar$ positions can be filled with any real number. This shows the matrix 
$$
H=\begin{pmatrix}
1 & 0 & 0\\
0 & 1/2 & -1/2\\
0 & -1/2 & 0
\end{pmatrix}
$$
doesn't belong to $K_{\S^3_+}(X,Y) $ since $H_{23}$ and $H_{32}$ are not zero. To elaborate more  on why this happens, observe first that 
$U:=I\in \O^3(X)\cap \O^3(Y)$. Thus, we have 
$$
\lm'(X;H)=\big(\lm(U_{\al_1}^\top H U_{\al_1}), \lm(U_{\al_2}^\top H U_{\al_2})\big)=\big(\lm( H_{\al_1\al_1}), \lm(  H_{\al_2\al_2})\big)= (1,\frac{1+\sqrt{5}}{4},\frac{1-\sqrt{5}}{4}),
$$
which clearly belongs to $K_ {\R^3_+}\big(\lm(X),\lm(Y)\big)$. This can be justified via the equivalent description of $K_ {\R^3_+}\big(\lm(X),\lm(Y)\big)$
in \eqref{spcr1}. However, it is possible to demonstrate that $\Lm(Y)_{\al_2\al_2}$ and $U_{\al_2}^\top  HU_{\al_2}$ don't have a simultaneous ordered spectral decomposition by 
showing that 
\begin{equation}\label{last3}
\big\la \Lm(Y)_{\al_2\al_2},U_{\al_2}^\top  HU_{\al_2}\big\ra\neq \big\la \lm( \Lm(Y)_{\al_2\al_2}), \lm(U_{\al_2}^\top  HU_{\al_2})\big\ra.
\end{equation}
This, indeed, results from the fact that Fan's inequality in \eqref{fani} becomes equality if and only if the matrices therein have a simultaneous ordered spectral decomposition.
To prove \eqref{last3}, we deduce from $ \Lm(Y)_{\al_2\al_2}=\Diag(0,-1)$ and $U_{\al_2}^\top  HU_{\al_2}=H_{\al_2\al_2}$ that 
$$
\big\la \Lm(Y)_{\al_2\al_2},U_{\al_2}^\top  HU_{\al_2}\big\ra= \tr \Big(
\begin{pmatrix}
0 & 0 \\
0 & -1
\end{pmatrix}
\begin{pmatrix}
1/2 & -1/2\\
-1/2 & 0
\end{pmatrix}
\Big)=0.
$$
On the other hand, we have 
$$
 \big\la \lm( \Lm(Y)_{\al_2\al_2}), \lm(U_{\al_2}^\top  HU_{\al_2})\big\ra= \frac{\sqrt{5}-1}{4},
$$
which confirms \eqref{last3}, and hence tells us that the main reason for $H\notin K_ {\S^3_+}\big(X,Y\big) $ is the failure of ensuring a
simultaneous ordered spectral decomposition for $\Lm(Y)_{\al_2\al_2}$ and $U_{\al_2}^\top  HU_{\al_2}$. 
}
\end{Example}

As mentioned before, we can partition any vector $p\in \R^n$ into  $(p_{\al_1},\ldots, p_{\al_r})$ with $\al_m$, $m=1,\ldots,r$,  taken from \eqref{index}.
Pick $m\in \{1,\ldots,r\}$ and recall from \eqref{index2} that the index set $\al_m=\cup_{i=1}^{\rho_m}\beta_i^m$. This allows to  partition further  $p_{\al_m}$ into $(y_{\beta_1^m},\ldots,y_{\beta_{\rho_m}^m})$, where 
$y_{\beta_i^m}\in \R^{|\beta_i^m|}$ for any $i=1,\ldots,\rho_m$. In summary, we can equivalently write $p$ as 
\begin{equation}\label{nrep}
(y_{\beta_1^1},\ldots,y_{\beta_{\rho_1}^1},\ldots, y_{\beta_1^r},\ldots,y_{\beta_{\rho_r}^r}),
\end{equation}
where  $r$, taken from \eqref{index},  and $\rho_m$, taken from \eqref{index2},  stand for the number of distinct eigenvalues of $X$ and $U_{\al_m}^\top H U_{\al_m}$, respectively. 
Thus, the representation of $p$  in \eqref{nrep} is associated with the permutation matrices in $\P^n_{X,H}$, defined prior to  Proposition~\ref{psymm}, as a subset of $\P^n$.
In fact, any permutation matrix $Q\in \P^n_{X,H}$ has a representation of the form $\Diag(B^1_1,\ldots,B^1_{\rho_1},\ldots, B^r_1,\ldots,B^r_{\rho_r})$,  where $B^m_j \in \R^{|\beta_j^m|\times| \beta_j^m|}$ is a permutation matrix for any $j\in\{1,\ldots,\rho_m\}$
and $m\in \{1,\ldots,r\}$. Denote by $\R^n_{\dn}$ the set of all vectors $(x_1,\ldots,x_n)$ such that $x_1\ge \cdots\ge x_n$.

\begin{Proposition}\label{imp}Assume that the spectral function $g=\th\circ \lm$ in \eqref{spec} is     lsc and convex and that $Y\in \sub g(X)$ and $H\in K_g(X,Y)$.
If $\th$ is parabolically regular at $\lm(X)$ for $\lm(Y)$, then the following properties hold.
 \begin{itemize}[noitemsep,topsep=0pt]
 \item [{\rm (a)}]   There exists $z\in \R^n$, which has a representation in the form in \eqref{nrep} with $y_{\beta_i^m}\in \R^{|\beta_i^m|}_{\dn}$ for any $i\in \{1,\ldots,\rho_m\}$ and $m\in \{1,\ldots,r\}$, satisfying 
\begin{equation}\label{prn}
\d^2 \th(\lm(X),\lm(Y))( \lambda' (X ; H ))= \d^2 \th  (\lm(X) ) ( \lambda' (X ; H ) \verll  z  )-  \la \lm(Y),z\ra.
\end{equation}
 \item [{\rm (b)}]  There exists a matrix $\Hat W\in \S^n$ such that  $\lambda^{''} (X ; H , \Hat W) =z$, where $z$ comes from {\rm(}a{\rm)}.
\end{itemize}
\end{Proposition}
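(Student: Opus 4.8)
The plan is to reduce part (a) to a rearrangement argument applied to a minimizer produced by parabolic regularity, and to settle part (b) by directly inverting the formula \eqref{paralam}. First I would record the standing facts. By \cite[Corollary~5.2.3]{bl} the symmetric function $\th$ is lsc and convex; by Proposition~\ref{subsp} we have $\lm(Y)\in\sub\th(\lm(X))$ together with a common $U\in\O^n(X)\cap\O^n(Y)$; and by Proposition~\ref{crit} the membership $H\in K_g(X,Y)$ gives $\lm'(X;H)\in K_\th(\lm(X),\lm(Y))$. Since $\th$ is parabolically regular at $\lm(X)$ for $\lm(Y)$, Proposition~\ref{parregularity}(b) yields
\[
\d^2\th(\lm(X),\lm(Y))(\lm'(X;H))=\inf_{z\in\R^n}\big\{\d^2\th(\lm(X))(\lm'(X;H)\verlm z)-\la\lm(Y),z\ra\big\},
\]
and, when this common value is finite, Proposition~\ref{parregularity}(c) supplies a minimizer $\oz$. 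The whole content of (a) is then to upgrade $\oz$ to a minimizer whose blocks $y_{\beta_i^m}$ are nonincreasing.

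To do so I would sort $\oz$ blockwise. Choose $Q\in\P^n_{X,H}$ that rearranges the entries of $\oz$ on each index set $\beta_j^m$ into nonincreasing order and set $z:=Q\oz$; by construction $z$ has the block form \eqref{nrep} with $y_{\beta_i^m}\in\R^{|\beta_i^m|}_{\dn}$. The parabolic-subderivative term is untouched: Proposition~\ref{psymm} states that $w\mapsto\d^2\th(\lm(X))(\lm'(X;H)\verlm w)$ is symmetric with respect to $\P^n_{X,H}$, so $\d^2\th(\lm(X))(\lm'(X;H)\verlm z)=\d^2\th(\lm(X))(\lm'(X;H)\verlm\oz)$. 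For the linear term, each $\al_m$ is a block of consecutive indices and each $\beta_j^m$ is consecutive within it, so the restriction $\lm(Y)_{\beta_j^m}$ is nonincreasing; since $z_{\beta_j^m}$ is the nonincreasing rearrangement of $\oz_{\beta_j^m}$, the rearrangement inequality gives $\la\lm(Y)_{\beta_j^m},z_{\beta_j^m}\ra\ge\la\lm(Y)_{\beta_j^m},\oz_{\beta_j^m}\ra$, whence $\la\lm(Y),z\ra\ge\la\lm(Y),\oz\ra$. Combining the two, the bracketed objective at $z$ is no larger than at $\oz$; as $\oz$ is a minimizer, so is $z$, which is precisely \eqref{prn}. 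If instead the common value is $+\infty$, then every $z$ makes the bracket infinite, and I would take $z=\lm''(X;H,W)$ for an arbitrary $W$: it has nonincreasing blocks by \eqref{paralam} and makes both sides of \eqref{prn} equal $+\infty$, a choice that simultaneously settles (b).

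For part (b), given $z$ with nonincreasing blocks I would produce $\Hat W$ with $\lm''(X;H,\Hat W)=z$ by inverting \eqref{paralam}. Set $V:=U\,\Diag(Q_1,\ldots,Q_r)\in\O^n$, whose columns indexed by $\beta_j^m$ form the $n\times|\beta_j^m|$ matrix $V_{mj}=U_{\al_m}R_{mj}$ with $R_{mj}=(Q_m)_{\beta_j^m}$; then \eqref{paralam} says that on the block $\beta_j^m$ the vector $\lm''(X;H,W)$ equals the nonincreasingly ordered eigenvalues of $M_{mj}(W):=V_{mj}^\top\big(W+2H(\mu_mI-X)^{\dagger}H\big)V_{mj}$. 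Since $V$ is orthogonal, writing a candidate as $\Hat W=V\Tilde W V^\top$ turns $V_{mj}^\top\Hat W V_{mj}$ into the corresponding diagonal block of $\Tilde W$; choosing that block to be $\Diag(y_{\beta_j^m})-V_{mj}^\top\big(2H(\mu_mI-X)^{\dagger}H\big)V_{mj}$ and the off-diagonal blocks of $\Tilde W$ to be zero gives $M_{mj}(\Hat W)=\Diag(y_{\beta_j^m})$. Because $y_{\beta_j^m}$ is nonincreasing, its ordered eigenvalue vector is itself, so $\lm''(X;H,\Hat W)=z$, proving (b).

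The main obstacle is the finite case of (a): one must recognize that the freedom to permute coordinates within the blocks $\beta_j^m$—exactly the group $\P^n_{X,H}$—leaves the parabolic subderivative invariant (Proposition~\ref{psymm}) while, paired against the nonincreasing vector $\lm(Y)$, sorting can only increase $\la\lm(Y),z\ra$. It is this interplay of block symmetry with the rearrangement inequality that forces the minimizer into the required ordered form. Once that is in place, (b) is routine linear algebra, hinging only on the orthogonality of $V$ and the surjectivity of $W\mapsto(M_{mj}(W))_{m,j}$ onto tuples of symmetric matrices.
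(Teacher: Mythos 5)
Your proof is correct and follows essentially the same route as the paper's: part (a) via the minimizer supplied by Proposition~\ref{parregularity}, blockwise sorting with a permutation matrix from $\P^n_{X,H}$, the invariance from Proposition~\ref{psymm}, and the Hardy--Littlewood--P\'olya (rearrangement) inequality; part (b) by inverting \eqref{paralam} through an orthogonal conjugation that makes the relevant blocks equal to $\Diag(y_{\beta_j^m})$, whose ordered eigenvalues are the entries themselves. The only deviations are cosmetic: you build $\Hat W$ block-diagonally at the finer $\beta_j^m$ level (subtracting the curvature term per block rather than globally as the paper does with its matrix $A$), and you treat the infinite-value case explicitly, which the paper leaves implicit.
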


\begin{proof} We deduce from $Y\in \sub g(X)$ and Proposition~~\ref{subsp} that $\lm(Y)\in \sub \th  (\lm(X) )$. Also it follows from 
$H\in K_g(X,Y)$ and Proposition~\ref{crit} that $\lm'(X;H)\in K_\theta\big(\lm(X),\lm(Y)\big)$. Employing now Proposition~\ref{parregularity}
ensures the existence of $p\in \R^n$ satisfying \eqref{prn}. As explained above, any such a vector  $p$ has a representation in the form of  \eqref{nrep} with $y_{\beta_{i}^m}\in \R^{|\beta_i^m|}$ 
for any $i\in \{1,\ldots,\rho_m\}$ and $m\in \{1,\ldots,r\}$. We are going to show that we can find a vector $p$ with the representation in \eqref{nrep} such that  $y_{\beta_{i}^m}\in \R^{|\beta_i^m|}_{\dn}$ 
for any $i\in \{1,\ldots,\rho_m\}$ and $m\in \{1,\ldots,r\}$, meaning the components of each $y_{\beta_{i}^m}$ have  nonincreasing order.  To this end, pick $m\in \{1,\ldots,r\}$ and $i\in \{1,\ldots,\rho_m\}$
and choose then    a $|\beta_i^m|\times |\beta_i^m|$ permutation matrix $B^m_i$ such that $q_{\beta_{i}^m}:=B^m_i y_{\beta_{i}^m}\in  \R^{|\beta_i^m|}_{\dn}$. Set $Q:=\Diag(B_1^1,\ldots,B_{\rho_1}^1,\ldots,B_1^r,\ldots,B_{\rho_r}^r)$
and observe that  $Q\in \P^n_{X,H}$. Moreover, let 
\begin{equation}\label{nz}
z:=(q_{\beta_{1}^1},\ldots,q_{\beta_{\rho_1}^1},\ldots, q_{\beta_{1}^r},\ldots,q_{\beta_{\rho_r}^r}).
\end{equation}
Clearly, we have $z=Qp$. It follows from Proposition~\ref{psymm} that 
$$
 \d^2 \th (\lambda(X))\big( \lambda' (X ; H ) \verlm z)= \d^2 \th (\lambda(X))\big( \lambda' (X ; H ) \verlm p).
 $$
 We claim now that $ \la \lm(Y),p\ra \le  \la \lm(Y),z\ra$. To justify it,  suppose that 
 $$
 (\lm(Y)_{\beta_{1}^1},\ldots,\lm(Y)_{\beta_{\rho_1}^1},\ldots, \lm(Y)_{\beta_{1}^r},\ldots,\lm(Y)_{\beta_{\rho_r}^r})
 $$
 is a partition of the vector $\lm(Y)$ corresponding to \eqref{nrep}. Note that $\lm(Y)_{\beta_{i}^m}\in  \R^{|\beta_i^m|}_{\dn}$ for any $i\in \{1,\ldots,\rho_m\}$ and $m\in \{1,\ldots,r\}$.
 Thus, we get 
 $$
 \la \lm(Y),p\ra =\sum_{m=1}^r \sum_{i=1}^{\rho_m} \la \lm(Y)_{\beta_{i}^m},y_{\beta_{i}^m}\ra\le  \sum_{m=1}^r \sum_{i=1}^{\rho_m} \la \lm(Y)_{\beta_{i}^m},q_{\beta_{i}^m} \ra= \la \lm(Y),z\ra,
 $$
where the inequality is a consequence of the Hardy-Littlewood-P\'olya inequality (cf. \cite[Proposition~1.2.4]{bl}). Set $\ph(x)= \d^2 \th  (\lm(X) ) ( \lambda' (X ; H ) \verll  x  )-  \la \lm(Y),x\ra$ for
any $x\in \R^n$ and observe from Proposition~\ref{parregularity} that $p$ is a minimizer of $\ph$. But we showed above that $\ph(z)\le \ph(p)$, which tells us that $z$ is also a minimizer of 
$\ph$. Thus, we arrive at $\ph(z)=\ph(p)$, which implies that \eqref{prn} holds for $z$. This proves (a).

Turning now to the proof of (b), pick the vector $z$ from \eqref{nz}. We can equivalently write via the index sets   $\al_m$, $m=1,\ldots,r$, from  \eqref{index} that  
\begin{equation}\label{repz}
 z=(z_{\al_1},\ldots,z_{\al_r})\quad \mbox{with}\quad z_{\al_m}=(q_{\beta_{1}^m},\ldots,q_{\beta_{\rho_m}^m})\in \R^{|\al_m|}\quad \mbox{for all}\;\; m\in \{1,\ldots,r\}.
\end{equation}
 Take   the $|\al_m|\times |\al_m|$ matrix $Q_m$, $m=1,\ldots,r$, from  \eqref{sdt1}  and consider the $n\times n$ block diagonal matrix 
\begin{equation}\label{mata}
 A=\Diag\big(Q_1\Diag(z_{\al_1})Q_1^\top,\ldots,Q_r\Diag(z_{\al_r})Q_r^\top\big).
\end{equation}
We claim that there exists a matrix $\Hat W\in \S^n$ such that for any $m=1,\ldots,r$ the relationship 
\begin{equation}\label{what}
U_{\al_m}^\top  \Hat W U_{\al_m}=    U_{\al_m}^\top \big(2H (X- \mu_{m}I)^{\dagger} H + UAU^\top\big) U_{\al_m}
\end{equation}
holds, where $\mu_1>\cdots>\mu_r$ are the distinct eigenvalues of $X$ and $U\in \O^n(X)$.
Indeed, to find such a matrix $\Hat W$, let $W\in \S^n$ and  set $\Hat W=UWU^\top$ in the above equality. This, coupled with \eqref{dfa}, leads us to  
$$
W_{\al_m\al_m}=U_{\al_m}^\top  UWU^\top U_{\al_m}=U_{\al_m}^\top  \Hat W U_{\al_m}=   U_{\al_m}^\top \big(2H (X- \mu_{m}I)^{\dagger} H +UAU^\top\big) U_{\al_m}
$$  
for all  $m=1,\ldots,r$. Define the matrix $W$ as the block diagonal matrix $\Diag(W_{\al_1\al_1},\ldots,W_{\al_r\al_r})$  from which we can  obtain the claimed matrix $\Hat W$. 
Suppose now that $i\in \{1,\ldots,n\}$. By \eqref{dex2}, there are  $m\in \{1,\ldots,r\}$ and $j\in \{1,\ldots,\rho_m\}$ such that $i\in \al_m$ and $\ell_i\in \beta_j^m$.
According to \eqref{paralam}, we have 
\begin{eqnarray} 
\lambda_i^{''} (X; H, \Hat W)&=& \lambda_{\ss \l'_i} \Big( (Q_m)_ {\beta^m_j}^\top  \big( U_{\al_m}^\top ( \hat W + 2  H (\mu_m I  - X)^{\dagger} H ) U_{\al_m} \big) (Q_m)_ {\beta^m_j}\Big)\nonumber\\
&=&  \lambda_{\ss \l'_i} \Big( (Q_m)_ {\beta^m_j}^\top  \big( U_{\al_m}^\top  UAU^\top  U_{\al_m} \big) (Q_m)_ {\beta^m_j}\Big)\nonumber\\
&=&  \lambda_{\ss \l'_i} \Big( (Q_m)_ {\beta^m_j}^\top  Q_m\Diag(z_{\al_m})Q_m^\top  (Q_m)_ {\beta^m_j}\Big)\nonumber\\
&=&  \lambda_{\ss \l'_i} \big(\Diag(q_{\beta_{j}^m})\big)\label{sur},
\end{eqnarray}
where the last two equalities result from \eqref{dfa}. 
Consider now a partition of $\lambda^{''} (X; H, \Hat W)$ corresponding to \eqref{nz} as 
$$
(\eta_{\beta_{1}^1},\ldots,\eta_{\beta_{\rho_1}^1},\ldots, \eta_{\beta_{1}^r},\ldots,\eta_{\beta_{\rho_r}^r}).
$$
Thus, it follows from \eqref{sur} and the definition $\ell'_i$ that   $$\eta_{\beta_{j}^m}= \lambda  \big(\Diag(q_{\beta_{j}^m})\big)=q_{\beta_{j}^m}$$ for any $j\in \{1,\ldots,\rho_m\}$ and $m\in \{1,\ldots,r\}$ since $q_{\beta_{j}^m} \in  \R^{|\beta_j^m|}_{\dn}$.
This confirms that $\lambda^{''} (X; H, \Hat W)=z$ and hence completes the proof of (b).
\end{proof}

We are now ready to characterize parabolic regularity of spectral functions. We begin with the following result in which we 
provide a sufficient condition to calculate the domain of the second subderivative of spectral functions.

\begin{Proposition}\label{calss8}
Assume that the spectral function $g=\th\circ \lm$ in \eqref{spec} is   convex and that $Y\in \sub g(X)$.
Then, we have $\dom \d^2g(X,Y)\subset K_g(X,Y)$. Equality holds if, in addition,   $g$ is parabolically epi-differentiable at $X$ for any $H\in K_g(X,Y)$.
\end{Proposition}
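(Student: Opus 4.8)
The plan is to handle the inclusion and the equality separately, both by reducing to the general convex-analytic facts about second subderivatives already cited in the proof of Proposition~\ref{parregularity}. For the inclusion $\dom \d^2 g(X,Y)\subset K_g(X,Y)$, I would invoke \cite[Proposition~13.5]{rw}: since $g$ is convex and $Y\in\sub g(X)$, the domain of $\d^2 g(X,Y)$ is automatically contained in the critical cone $K_g(X,Y)$. If one prefers a self-contained argument, the same inclusion follows directly from the definition: whenever $\d^2 g(X,Y)(H)<\infty$ there are $t_k\dn 0$ and $H_k\to H$ along which $\Delta^2_{t_k}g(X,Y)(H_k)$ is bounded above, and dividing by $t_k$ and passing to the limit gives $\d g(X)(H)\le\la Y,H\ra$; the reverse inequality is automatic from $Y\in\sub g(X)$, so $\d g(X)(H)=\la Y,H\ra$, i.e.\ $H\in K_g(X,Y)$.

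For the equality, under the additional hypothesis that $g$ is parabolically epi-differentiable at $X$ for every $H\in K_g(X,Y)$, it remains to prove the reverse inclusion $K_g(X,Y)\subset\dom\d^2 g(X,Y)$. I would fix $H\in K_g(X,Y)$, so that $\d g(X)(H)=\la Y,H\ra$ is finite and the parabolic subderivative $W\mapsto\d^2 g(X)(H\verll W)$ is well defined. By the assumed parabolic epi-differentiability at $X$ for this $H$, its domain $\dom\d^2 g(X)(H\verll\cdot)$ is nonempty, so I can pick $W\in\S^n$ with $\d^2 g(X)(H\verll W)<\infty$. The crucial input is then the always-valid inequality ``$\le$'' from \cite[Proposition~13.64]{rw}, which at $H\in K_g(X,Y)$ yields $\d^2 g(X,Y)(H)\le \d^2 g(X)(H\verll W)-\la Y,W\ra<\infty$. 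Hence $H\in\dom\d^2 g(X,Y)$, and combining with the first part gives $\dom\d^2 g(X,Y)=K_g(X,Y)$.

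The argument is essentially the assembly of two standard estimates, so I do not anticipate a serious analytic obstacle; the only point demanding care is the bookkeeping at the start of the second part. One must check that membership of $H$ in the critical cone $K_g(X,Y)$---and not merely in $\dom\d g(X)$---is exactly what both makes the parabolic epi-differentiability hypothesis applicable (it is stated for directions in $K_g(X,Y)$) and licenses the use of the \cite[Proposition~13.64]{rw} estimate at $H$. Convexity of $g$ is what guarantees that $\d^2 g(X,Y)$ is proper and that this ``$\le$'' inequality is available, so the whole reduction rests on the convexity assumption in the statement.
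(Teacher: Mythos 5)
Your proposal is correct and takes essentially the same route as the paper: the paper gets the inclusion $\dom \d^2g(X,Y)\subset K_g(X,Y)$ by citing \cite[Proposition~2.1(ii)-(iii)]{ms} and the equality by noting that parabolic epi-differentiability forces $\dom \d^2 g(X)(H\verll \cdot)\neq\emptyset$ and then citing \cite[Proposition~3.4]{ms}. Your argument simply unpacks those two citations into their underlying ingredients---the standard convex-analytic inclusion (\cite[Proposition~13.5]{rw}, or your direct limiting argument) and the always-valid upper estimate of \cite[Proposition~13.64]{rw} applied to some $W\in\dom \d^2 g(X)(H\verll\cdot)$---so no substantive difference or gap.
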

\begin{proof} The claimed inclusion results from \cite[Proposition~2.1(ii)-(iii)]{ms}. To establish the second claim, it follows from 
parabolic epi-differentiability of $g$ at  $X$ for any $H\in K_g(X,Y)$ that $  \dom  \d^2  g(X)( H\verll \; . \; ) \neq \emptyset$.
This, coupled with \cite[Proposition~3.4]{ms}, confirms that $\dom \d^2g(X,Y)= K_g(X,Y)$ and hence completes the proof.
\end{proof}

Note that the assumption of parabolic epi-differentiability of $g$ in the result above can be ensured  via Theorem~\ref{sochpar}(a)
by parabolic epi-differentiability of $\th$. An important class of functions for which this assumption automatically is satisfied is 
polyhedral functions; see \cite[Exercise~13.61]{rw}. 
This class of functions allows us to cover many   examples of spectral functions, which are important for applications.
We should add that  polyhedral functions that are symmetric were characterized in \cite[Proposition~1]{cd}. 

Our next result presents a characterization of parabolic regularity of spectral functions.

\begin{Theorem}[parabolic regularity of spectral functions]\label{calss}
Assume that the spectral function $g=\th\circ \lm$ in \eqref{spec} is   locally Lipschitz continuous with respect to its domain, lsc, and convex. 
Let $\mu_1>\cdots>\mu_r$ be the distinct eigenvalues of $X$.
 Then the following properties hold. 
 \begin{itemize}[noitemsep,topsep=0pt]
 \item [{\rm (a)}] If $Y\in \sub g(X)$  and $\th$ is parabolically regular at $\lm(X)$ for $\lm(Y)$ and parabolically epi-differentiable at $\lm(X)$, then $g$ is parabolically regular at $X$ for $Y$ and  for any $H\in K_g(X,Y)$ we have 
\begin{equation*}\label{lbss2}
\d^2g(X,Y)(H)= \d^2\th\big(\lm(X),\lm(Y)\big)\big(\lm'(X;H)\big) + 2\sum_{m=1}^{r}  \big\la \Lm(Y)_{\al_m\al_m}, U_{\al_m}^\top H ( \mu_{m}I- X)^{\dagger} H  U_{\al_m}\big\ra,
\end{equation*}
where   $\al_m$, $m=1,\ldots,r$, come from \eqref{index}, $U\in \O^n(X)\,\cap\, \O^n(Y)$, and $\Lm(Y)=\Diag(\lm(Y)\big)$.
 \item [{\rm (b)}] If  $g $ is parabolically epi-differentiable and parabolically regular at $X$, then $\th$ is parabolically regular at $\lm(X)$.
\end{itemize}
\end{Theorem}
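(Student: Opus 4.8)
The plan is to obtain part (a) from the abstract characterization of parabolic regularity in Proposition~\ref{parregularity}(c), using the second-order perturbation $\Hat W$ manufactured in Proposition~\ref{imp}. First I would fix $H\in K_g(X,Y)$. Since $\th$ is parabolically epi-differentiable at $\lm(X)$ and $H\in K_g(X,Y)\subset T_{\ss\dom g}(X)$, Theorem~\ref{sochpar}(a) shows that $g$ is parabolically epi-differentiable at $X$ for $H$, so Proposition~\ref{calss8} gives $\dom\d^2 g(X,Y)=K_g(X,Y)$ and a finite value at every such $H$. Because Proposition~\ref{lbs1} already supplies the inequality ``$\ge$'' in the asserted formula, all of part (a) reduces to producing, for each $H\in K_g(X,Y)$, a matrix $\Hat W$ for which $\d^2 g(X)(H\verll \Hat W)-\la \Hat W,Y\ra$ equals the right-hand side of the claimed identity: the always-valid estimate $\d^2 g(X,Y)(H)\le\d^2 g(X)(H\verll \Hat W)-\la \Hat W,Y\ra$ from \cite[Proposition~13.64]{rw} then closes the gap and at the same time verifies condition (c) of Proposition~\ref{parregularity}, since $\Hat W$ will lie in $\dom\d^2 g(X)(H\verll \cdot)$.

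The computation proceeds as follows. With $z$ and $\Hat W$ taken from Proposition~\ref{imp} (so that $\lambda''(X;H,\Hat W)=z$), the parabolic chain rule \eqref{sochpar1} gives $\d^2 g(X)(H\verll \Hat W)=\d^2\th(\lm(X))(\lm'(X;H)\verll z)$, which \eqref{prn} rewrites as $\d^2\th(\lm(X),\lm(Y))(\lm'(X;H))+\la\lm(Y),z\ra$. It then remains to evaluate $\la \Hat W,Y\ra$. Writing $Y=U\Lm(Y)U^\top$ and using \eqref{dfa}, \eqref{what}, and the block form \eqref{mata} of $A$, one gets
\begin{align*}
\la \Hat W,Y\ra&=-2\sum_{m=1}^r\la \Lm(Y)_{\al_m\al_m},U_{\al_m}^\top H(\mu_m I-X)^\dagger H U_{\al_m}\ra\\
&\quad+\sum_{m=1}^r\la Q_m\Diag(z_{\al_m})Q_m^\top,\Lm(Y)_{\al_m\al_m}\ra.
\end{align*}
The crux is the identity $\sum_{m=1}^r\la Q_m\Diag(z_{\al_m})Q_m^\top,\Lm(Y)_{\al_m\al_m}\ra=\la\lm(Y),z\ra$. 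This is where the critical-cone structure enters: because $H\in K_g(X,Y)$, Proposition~\ref{crit} guarantees that $U_{\al_m}^\top H U_{\al_m}$ and $\Lm(Y)_{\al_m\al_m}$ admit a simultaneous ordered spectral decomposition, so I would select the diagonalizing matrices $Q_m$ in \eqref{sdt1} to be common ordered diagonalizers of both. Since $\Lm(Y)_{\al_m\al_m}=\Diag(\lm(Y)_{\al_m})$ is already diagonal with nonincreasing entries, such a $Q_m$ commutes with $\Lm(Y)_{\al_m\al_m}$, whence $\la Q_m\Diag(z_{\al_m})Q_m^\top,\Lm(Y)_{\al_m\al_m}\ra=\la\Diag(z_{\al_m}),\Lm(Y)_{\al_m\al_m}\ra=\la z_{\al_m},\lm(Y)_{\al_m}\ra$; summing over $m$ yields the identity. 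Consequently $\la\lm(Y),z\ra-\la \Hat W,Y\ra=2\sum_{m=1}^r\la\Lm(Y)_{\al_m\al_m},U_{\al_m}^\top H(\mu_m I-X)^\dagger H U_{\al_m}\ra$, which is precisely what matches the claimed formula. Together with the lower bound this finishes (a) and certifies parabolic regularity of $g$ at $X$ for $Y$.

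For part (b) I would exploit the composite representation \eqref{spec2}, $\th=g\circ\Diag$, together with the metric-subregularity estimate \eqref{revset2}, which is exactly the qualification condition required by the composite second-order theory of \cite{ms}, and the fact that the inner map $x\mapsto\Diag(x)$ is linear, hence twice differentiable. The first step is to transport parabolic regularity (and parabolic epi-differentiability) of $g$ from $X$ to the diagonal point $\Diag(\lm(X))$: with $U\in\O^n(X)$ the isometry $Z\mapsto U^\top ZU$ fixes $g$ by orthogonal invariance and carries $X$ to $\Diag(\lm(X))$, so the difference quotients, the second subderivative, and the parabolic subderivative all transform by conjugation exactly as in \eqref{psd4}--\eqref{psd5}, while $\sub g(\Diag(\lm(X)))=U^\top\sub g(X)U$. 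Hence $g$ is parabolically regular and parabolically epi-differentiable at $\Diag(\lm(X))$. The second step feeds this into the composite parabolic regularity theorem of \cite{ms}: under \eqref{revset2} and twice differentiability of $\Diag$, parabolic regularity of the outer function $g$ at $\Diag(\lm(X))$ is inherited by $\th=g\circ\Diag$ at $\lm(X)$, which is the assertion of (b).

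The main obstacle is the reordering identity in part (a): everything hinges on being able to choose the orthogonal matrices $Q_m$ of \eqref{sdt1} so that they simultaneously and orderly diagonalize $U_{\al_m}^\top H U_{\al_m}$ and $\Lm(Y)_{\al_m\al_m}$, and on recognizing that this forces $Q_m$ to commute with $\Lm(Y)_{\al_m\al_m}$ and thus leaves the trace pairing with $z$ unchanged. Verifying that this choice is compatible with the construction of $\Hat W$ in Proposition~\ref{imp}, so that \eqref{what} still holds and $\lambda''(X;H,\Hat W)=z$, is the delicate bookkeeping step. For part (b) the subtlety is only to invoke the correct transfer result from \cite{ms} and to check that orthogonal conjugation preserves every second-order object involved, which is conceptually routine once the dictionary \eqref{psd4}--\eqref{psd5} is in place.
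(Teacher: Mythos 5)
Your proposal is correct and follows essentially the same route as the paper's own proof: for (a) it combines the lower bound of Proposition~\ref{lbs1}, the vector $z$ and matrix $\Hat W$ from Proposition~\ref{imp}, the chain rule \eqref{sochpar1}, and the key trick of choosing the $Q_m$ in \eqref{sdt1} as common ordered diagonalizers of $U_{\al_m}^\top H U_{\al_m}$ and $\Lm(Y)_{\al_m\al_m}$ (the paper's $\Hat Q_m$ satisfying \eqref{sod}), which yields exactly the trace identity $\sum_{m}\la Q_m\Diag(z_{\al_m})Q_m^\top,\Lm(Y)_{\al_m\al_m}\ra=\la\lm(Y),z\ra$; and for (b) it performs the same conjugation transfer to $\Diag(\lm(X))$ via \eqref{psd4}--\eqref{psd5} followed by the composite result \cite[Theorem~5.4]{ms} under \eqref{revset2}. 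The ``delicate bookkeeping step'' you flag is handled in the paper by the one-line observation that the $\Hat Q_m$ retain all properties of the $Q_m$ used in Proposition~\ref{imp}, so the construction of $\Hat W$ and the identity $\lambda''(X;H,\Hat W)=z$ go through unchanged.
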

\begin{proof} We begin with the proof of (a). To justify (a), it suffices by Proposition~\ref{parregularity}  to show that   for any  $H\in K_g(X,Y)$, we have 
\begin{equation}\label{prsp}
\d^2g(X,Y)(H)  =\inf_{W\in \S^n}\big\{ \d^2 g(X)( H\verll W)- \big\la Y,  W\big\ra\big\}.
\end{equation}
To this end, pick  $H\in K_g(X,Y)$ and deduce from \cite[Proposition~13.64]{rw} and  \eqref{sochpar1}, respectively, that    
\begin{eqnarray}\label{usss}
\d^2g(X,Y)(H) & \le &\inf_{W\in \S^n}\big\{ \d^2 g(X)( H\verll W)- \big\la Y,  W\big\ra\big\} \nonumber\\
&= &\inf_{W\in \S^n}\big\{ \d^2 \th \big(\lm(X)\big)\big( \lambda' (X ; H ) \verll \lambda^{''} (X ; H , W) \big)- \big\la Y,  W\big\ra\big\}.
\end{eqnarray}
Since $H\in K_g(X,Y)$ , it results from Proposition~\ref{crit} that the matrices  $\Lm(Y)_{\al_m\al_m}$ and $U_{\al_m}^\top  HU_{\al_m}$ have a simultaneous ordered spectral decomposition for any $m=1,\ldots,r$.
This means that there are matrices $\Hat Q_m\in \O^{|\al_m|}(\Lm(Y)_{\al_m\al_m})\cap \O^{|\al_m|}(U_{\al_m}^{\top} H U_{\al_m})$, $m=1,\ldots,r$,  such that 
\begin{equation}\label{sod}
\Lm(Y)_{\al_m\al_m}=\Hat Q_m\Lm(Y)_{\al_m\al_m}\Hat Q_m^\top\quad \mbox{and}\quad  U_{\al_m}^{\top} H U_{\al_m}=\Hat Q_m\Lambda(U_{\al_m}^\top  H U_{\al_m})\Hat Q_m^\top.
\end{equation}
Replace the matrices $  Q_m$ in the definition of the matrix $A$ in \eqref{mata} with $\Hat Q_m$, $m=1,\ldots,r$, and observe that the same conclusion can be achieved as the one in Proposition~\ref{imp}(b)
for the updated matrix $A$. In fact, the matrices $\Hat Q_m$ enjoy all the properties of $  Q_m$ together with the   relationships in \eqref{sod}, which are important for our argument below. 
Since $\th$ is parabolically regular at $\lm(X)$ for $\lm(Y)$, we conclude from Proposition~\ref{imp}(a) that there is $z\in \R^n$
with a representation in the form in \eqref{nz} with $q_{\beta_i^m}\in \R^{|\beta_i^m|}_{\dn}$ for any $i\in \{1,\ldots,\rho_m\}$ and $m\in \{1,\ldots,r\}$, satisfying \eqref{prn}.
According to Proposition~\ref{imp}(b), there exists a matrix $\Hat W\in \S^n$ such that  $\lambda^{''} (X ; H , \Hat W) =z$. 
 Employing now  \eqref{prn} and  \eqref{what},  and $Y=U\Lm(Y)U^\top$ and using a similar argument as \eqref{sdcy}, we arrive at 
\begin{eqnarray} 
  &  &\d^2 \th \big(\lm(X)\big)\big( \lambda' (X ; H ) \verll \lambda^{''} (X ; H , \Hat W) \big)- \big\la Y,\Hat W\big\ra \nonumber \\
&= & \d^2 \th \big(\lm(X)\big)\big( \lambda' (X ; H ) \verll  z \big) - \sum_{m=1}^{r}  \big\la \Lm(Y)_{\al_m\al_m}, U_{\al_m}^\top  \Hat W U_{\al_m} \big\ra \nonumber \\
&=& \d^2 \th(\lm(X),\lm(Y))( \lambda' (X ; H ))+  \la \lm(Y),z\ra\nonumber\\
&&-\sum_{m=1}^{r}  \big\la \Lm(Y)_{\al_m\al_m},  U_{\al_m}^\top \big(2H (X- \mu_{m}I)^{\dagger} H + UAU^\top\big) U_{\al_m}\big\ra \nonumber\\
&=&  \d^2 \th(\lm(X),\lm(Y))( \lambda' (X ; H ))+ 2\sum_{m=1}^{r}  \big\la \Lm(Y)_{\al_m\al_m},  U_{\al_m}^\top \big(H ( \mu_{m}I-X)^{\dagger} H \big) U_{\al_m}\big\ra \nonumber\\
&& + \la \lm(Y),z\ra -\sum_{m=1}^{r}  \big\la \Lm(Y)_{\al_m\al_m},  U_{\al_m}^\top \big( UAU^\top\big) U_{\al_m}\big\ra\label{usss2}.
\end{eqnarray}
By the definition of $A$ in \eqref{mata}, the equality in \eqref{dfa},  and the representation of the vector $z$ in \eqref{repz}, we obtain 
\begin{eqnarray*} 
\sum_{m=1}^{r}  \big\la \Lm(Y)_{\al_m\al_m},  U_{\al_m}^\top \big( UAU^\top\big) U_{\al_m}\big\ra&=& \sum_{m=1}^{r}  \big\la \Lm(Y)_{\al_m\al_m},  \Hat Q_m\Diag(z_{\al_m})\Hat Q_m^\top \big\ra\\
&=& \sum_{m=1}^{r}  \big\la \Hat Q_m^\top \Lm(Y)_{\al_m\al_m}  \Hat Q_m, \Diag(z_{\al_m}) \big\ra\\
&=& \sum_{m=1}^{r}  \big\la  \Lm(Y)_{\al_m\al_m}   , \Diag(z_{\al_m}) \big\ra=\la \lm(Y),z\ra,
\end{eqnarray*}
where the penultimate equality results from the   first relationship in \eqref{sod} and the last one is a consequence of $\Lm(Y)=\Diag(\lm(Y))$. This, coupled with \eqref{lbss},  \eqref{usss}, and \eqref{usss2}, brings us to 
\begin{eqnarray*}
&& \d^2 \th \big(\lm(X),\lm(Y)\big)\big( \lambda' (X ; H ) \big)+ 2\sum_{m=1}^{r}  \big\la \Lm(Y)_{\al_m\al_m}, U_{\al_m}^\top H ( \mu_{m}I- X)^{\dagger} H  U_{\al_m}\big\ra \\
 &\le &\d^2g(X,Y)(H)  \\
 &\le& \inf_{W\in \S^n}\big\{ \d^2 g(X)( H\verll W)- \big\la Y,  W\big\ra\big\} \\
 & \le &\d^2 \th \big(\lm(X)\big)\big( \lambda' (X ; H ) \verll \lambda^{''} (X ; H , \Hat W) \big)- \big\la Y,\Hat W\big\ra \\
  &= &  \d^2 \th \big(\lm(X),\lm(Y)\big)\big( \lambda' (X ; H ) \big)+ 2\sum_{m=1}^{r}  \big\la \Lm(Y)_{\al_m\al_m}, U_{\al_m}^\top H ( \mu_{m}I- X)^{\dagger} H  U_{\al_m}\big\ra.
\end{eqnarray*} 
These relationships clearly justify \eqref{prsp} and hence imply that $g$ is parabolically regular at $X$ for $Y$. Moreover, 
they confirm the claimed formula for the second subderivative of $g$ at $X$ for $Y$ for any $H\in K_g(X,Y)$ and hence complete the proof of (a).

Turning into the proof of (b), we need to show that $\th$ is parabolically regular at $\lm(X)$ for any $v\in \sub \th(\lm(X))$. 
To justify it, pick $v\in \sub \th(\lm(X))$ and $U\in \O^n(X)$ and deduce from \cite[Theorem~6]{l99} that $U\Diag (v) U^\top\in \sub g(X)$.
Since $g$ is convex and orthogonally invariant, the latter yields  $ \Diag(v)\in \sub g(\Lm(X))$, where $\Lm(X)=\Diag(\lm(X))$.
We claim that $g$ is parabolically regular at $\Lm(X)$ for $\Diag(v)$. 
To this end, set $Z:=U\Diag (v) U^\top$ and take  $H\in K_g(\Lm(X),\Diag(v))$. We conclude from \eqref{psd4} and the definition of the critical cone that $UHU^\top\in K_g(X,Z)$.
Thus, it also  follows from parabolic regularity of $g$ at $X$ for $Z$ and Proposition~\ref{parregularity} that there is 
$W\in \S^n$ such that 
\begin{equation}\label{pr90}
\d^2g(X,Z)(UHU^\top)  =  \d^2 g(X)( UHU^\top\verll W)- \big\la Z,  W\big\ra.
\end{equation}
 Since $g$ is orthogonally invariant, we get 
 \begin{eqnarray*} 
\d^2g(X,Z)(UHU^\top) & = & \liminf_{\substack{ t\dn 0 \\ H'\to UHU^\top }} \frac{g(X+tH')-g(X)-\la Z,H'\ra}{\sm t^2}\\
&=& \liminf_{\substack{ t\dn 0 \\ H'\to UHU^\top }} \frac{g(\Lm(X)+tU^\top H' U)-g(\Lm(X))-\la U^\top ZU,U^\top H'U\ra}{\sm t^2}\\
&\ge & \d^2g\big(\Lm(X),\Diag(v)\big)( H).
 \end{eqnarray*}
Similarly, we can show that $\d^2g(X,Z)(UHU^\top) \le  \d^2g\big(\Lm(X),\Diag(v)\big)(H)$, which leads us to $\d^2g(X,Z)(UHU^\top) =  \d^2g\big(\Lm(X),\Diag(v)\big)( H)$.
It follows from \eqref{psd5} that 
$$
\d^2 g(X)( UHU^\top\verll W)= \d^2 g(\Lm(X))(  H\verll U^\top WU).
$$
We also have $\big\la Z,  W\big\ra=\big\la \Diag(v),  U^\top WU\big\ra$. Combining these with \eqref{pr90} brings us to 
$$
 \d^2g\big(\Lm(X),\Diag(v)\big)(H)=\d^2 g(\Lm(X))(H\verll U^\top WU)- \big\la \Diag(v),  U^\top WU\big\ra.
$$
Since $H\in K_g(\Lm(X),\Diag(v))$ was taken arbitrarily and since  $\dom \d^2g\big(\Lm(X),\Diag(v)\big)\subset K_g(\Lm(X),\Diag(v))$ is satisfied because of Proposition~\ref{calss8}, 
we conclude from  Proposition~\ref{parregularity}     that $g$ is parabolically regular at $\Lm(X)$ for $\Diag(v)$.
Recall   that the symmetric function $\th$ satisfies \eqref{spec2}, which means that 
$\th$ can be represented as  $\th=g\circ F$ with  $F(x):= \Diag(x)$ for all $x\in \R^n$. 
We also deduce from the imposed assumption on  $\th$ and   \eqref{lipei}
that $g$ is locally Lipschitz continuous relative to its domain. According to \cite[Theorem~3.6]{mms1}, we get
$\sub \th(\lm(X))=\nabla F(\lm(X))^*\sub g(\Lm(X))$. It is not hard to see that for any $x\in \R^n$, $\nabla F(x)$ is a linear operator from $\R^n$ into $\S^n$, defined by $\nabla F(x) (y)=\sum_{i=1}^n y_i E_{ii}$ for any $y=(y_1,\ldots,y_n)\in \R^n$,
where   $E_{ii}$, $i=1,\ldots,n$, are the $n\times n$ matrix with $(i,i)$ entry equal to $1$ and elsewhere  equal to $0$.
This tells us that the adjoint operator $\nabla F(x)^*:\S^n\to \R^n$ has a representation in the form $\nabla F(x)^*B=\big(\tr(E_{11}B),\ldots,\tr(E_{nn}B)\big)$
for any $B\in \S^n$; see \cite[Example~1.8]{bec} for more details. Remember that $v\in \sub \th(\lm(X))$ and $ \Diag(v)\in \sub g(\Lm(X))$. Thus, we have 
$
\nabla F(\lm(X))^*\Diag(v)=v.
$
On the other hand, it follows from the proof of Theorem~\ref{sochpar}(b) that parabolic epi-differentiability of $g$ at $X$ for $UHU^\top\in \dom \d g(X)$ implies that of $g$
at $\Lm(X)$ for $H$. Combining these and  \cite[Theorem~5.4]{ms} tells us  that $\th$ is parabolically regular at $\lm(X)$ for $v$ and hence completes the proof of (b).
\end{proof}

Given the spectral function $g$ in \eqref{spec}, it was shown in \cite[Proposition~10]{cdz} that if $\th$ is ${\cal C}^2$-cone reducible in the sense of \cite[Definition~6]{cdz},
then $g$ enjoys the same property. Note that ${\cal C}^2$-cone reducibility of $\th$ is strictly stronger assumption than parabolic regularity of this function, utilized in Theorem~\ref{calss},
as shown in \cite[Theorem~6.2]{mms2} and \cite[Example~6.4]{mms2}. Note also   we showed in Theorem~\ref{calss}(b) that parabolic regularity of $g$ yields that of $\th$; such a result
was not achieved for ${\cal C}^2$-cone reducibility in \cite{cdz}. 

In many important applications of the spectral function $g$ in \eqref{spec}, the symmetric function $\th$ is a polyhedral function. In this case, all the assumptions
in Theorem~\ref{calss} are satisfied automatically. Furthermore, the second subderivative of $g$ has a simple representation as demonstrated below.

\begin{Corollary} \label{poly}
Assume that $g:\S^n\to \oR$ has  the spectral representation in \eqref{spec}  with the symmetric function $\th$ being  polyhedral.  
If  $\mu_1>\cdots>\mu_r$ are the distinct eigenvalues of $X$ and $Y\in \sub g(X)$, then 
 $g$ is parabolically regular at $X$ for $Y$ and  for any $H\in \S^n$ we have 
\begin{equation*}\label{lbss2}
\d^2g(X,Y)(H)= \dd_{K_g(X,Y)}(H)+ 2\sum_{m=1}^{r}  \big\la \Lm(Y)_{\al_m\al_m}, U_{\al_m}^\top H ( \mu_{m}I- X)^{\dagger} H  U_{\al_m}\big\ra,
\end{equation*}
where   $\al_m$, $m=1,\ldots,r$, come from \eqref{index}, $U\in \O^n(X)\,\cap\, \O^n(Y)$, and $\Lm(Y)=\Diag(\lm(Y)\big)$.
\end{Corollary}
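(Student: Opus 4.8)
The plan is to obtain this corollary as a direct specialization of Theorem~\ref{calss}(a), using the two facts that polyhedral functions automatically satisfy every hypothesis of that theorem and that their second subderivatives reduce to indicators of critical cones. First I would check the standing assumptions. Since $\th$ is polyhedral it is convex, lsc, and piecewise linear, hence locally Lipschitz continuous relative to its domain (polyhedral functions being a subclass of the piecewise linear-quadratic functions recalled in Section~\ref{sect02}). By \cite[Corollary~13.68]{rw} it is parabolically regular at $\lm(X)$ for every subgradient, in particular for $\lm(Y)$, where the inclusion $\lm(Y)\in\sub\th(\lm(X))$ follows from $Y\in\sub g(X)$ via Proposition~\ref{subsp}; and by \cite[Exercise~13.61]{rw} it is parabolically epi-differentiable at $\lm(X)$. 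The Lipschitz estimate \eqref{lipei} propagates local Lipschitz continuity relative to the domain from $\th$ to $g=\th\circ\lm$, which is moreover lsc and convex by \cite[Corollary~5.2.3]{bl}. Thus Theorem~\ref{calss}(a) is applicable and yields, for every $H\in K_g(X,Y)$,
$$\d^2g(X,Y)(H)= \d^2\th\big(\lm(X),\lm(Y)\big)\big(\lm'(X;H)\big) + 2\sum_{m=1}^{r}\big\la \Lm(Y)_{\al_m\al_m}, U_{\al_m}^\top H(\mu_m I-X)^{\dagger}HU_{\al_m}\big\ra,$$
together with the parabolic regularity of $g$ at $X$ for $Y$.

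Next I would evaluate the first term on the right. For a polyhedral convex function the second subderivative satisfies $\d^2\th(\lm(X),\lm(Y))=\dd_{K_\th(\lm(X),\lm(Y))}$: indeed, since $\th$ is piecewise linear, the difference quotient $\Delta_t^2\th(\lm(X),\lm(Y))(v)$ equals $2\big(\d\th(\lm(X))(v)-\la\lm(Y),v\ra\big)/t$ for small $t$, which is $0$ when $v\in K_\th(\lm(X),\lm(Y))$ and tends to $+\infty$ otherwise (this is the piecewise-linear specialization of \cite[Proposition~13.9]{rw}). When $H\in K_g(X,Y)$, Proposition~\ref{crit} gives $\lm'(X;H)\in K_\th(\lm(X),\lm(Y))$, so this indicator vanishes; hence the first summand equals $0=\dd_{K_g(X,Y)}(H)$ and the asserted formula holds throughout $K_g(X,Y)$.

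Finally I would extend the identity to arbitrary $H\in\S^n$. By Proposition~\ref{calss8} we have $\dom\d^2g(X,Y)\subset K_g(X,Y)$, so for $H\notin K_g(X,Y)$ the left-hand side equals $+\infty$. The right-hand side is likewise $+\infty$, since $\dd_{K_g(X,Y)}(H)=+\infty$ while the trace sum is a finite real number. The two sides therefore agree on the complement of the critical cone as well, which completes the verification for every $H\in\S^n$. The only genuine subtlety I anticipate is the clean identification $\d^2\th(\lm(X),\lm(Y))=\dd_{K_\th(\lm(X),\lm(Y))}$ for polyhedral $\th$; once this is in hand, the remainder is bookkeeping that invokes results already assembled in the excerpt.
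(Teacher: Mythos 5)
Your proof is correct and follows essentially the same route as the paper: check that polyhedral $\th$ satisfies the hypotheses of Theorem~\ref{calss}(a), evaluate the $\th$-term as zero on the critical cone via Proposition~\ref{crit} and \cite[Proposition~13.9]{rw}, and handle $H\notin K_g(X,Y)$ through Proposition~\ref{calss8}. The only cosmetic differences are that the paper cites \cite[Example~3.2]{ms} rather than \cite[Corollary~13.68]{rw} for parabolic regularity of polyhedral functions, and that it invokes the equality $\dom \d^2 g(X,Y)=K_g(X,Y)$ where your argument correctly observes that the inclusion $\dom \d^2 g(X,Y)\subset K_g(X,Y)$ suffices.
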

\begin{proof} It follows from  \cite[Exercise~13.61]{rw} and \cite[Example~3.2]{ms}, respectively, that $\th$ is parabolically epi-differentiable and parabolically  regular at $\lm(X)$.
By Theorem~\ref{calss}(a), the spectral function $g$ is parabolically regular at $X$ for $Y$. Moreover, we know from Proposition~\ref{calss8} that $\dom \d^2g(X,Y)=K_g(X,Y)$.
Take $H\in K_g(X,Y)$ and observe from Proposition~\ref{crit} that $\lm'(X;H)\in K_{\th}\big(\lm(X),\lm(Y)\big)$. Thus, we  obtain from \cite[Proposition~13.9]{rw} that 
$$
\d^2\th\big(\lm(X),\lm(Y)\big)\big(\lm'(X;H)\big)=\dd_{K_{\th} (\lm(X),\lm(Y) )}\big(\lm'(X;H)\big)=0.
$$
Employing now   Theorem~\ref{calss}(a) tells us that 
$$
\d^2g(X,Y)(H)= 2\sum_{m=1}^{r}  \big\la \Lm(Y)_{\al_m\al_m}, U_{\al_m}^\top H ( \mu_{m}I- X)^{\dagger} H  U_{\al_m}\big\ra\quad \mbox{for all}\;\;H\in K_g(X,Y).
$$
On the other hand, if $H\notin K_g(X,Y)$, we have $\d^2g(X,Y)(H)=\infty$ since $\dom \d^2g(X,Y)=K_g(X,Y)$. This proves the claimed formula for the second subderivative of $g$
at $X$ for $Y$.
\end{proof}

Note that the conjugate function of the parabolic subderivative of the spectral function $g$ in \eqref{spec}  with $\th$ therein being polyhedral
was recently calculated in \cite[Propositions~6 \& 10]{cd} by dividing a polyhedral function into two parts. In general, such a result gives us 
an upper bound for the second subderivative (cf. \cite[Proposition~13.64]{rw}). According to Proposition~\ref{parregularity}, parabolic regularity is, indeed, equivalent to saying that 
the latter conjugate function coincides with the second subderivative. We should add that parabolic regularity of $g$ was not discussed in \cite{cd} and so Corollary~\ref{lbss2}
can't be derived from the aforementioned results in \cite{cd}. 

We continue to apply the formula of the second subderivative, obtained in Corollary~\ref{poly}, for two important examples of spectral functions
and show how one can simplify the established formula for the second subderivative in  these cases.

\begin{Example}\label{ssex}{\rm
 \begin{itemize}[noitemsep,topsep=0pt]
 \item [{\rm (a)}]   Assume that $g:\S^n\to \R$ is defined by $g(X)=\lm_{\max}(X)$, where $\lm_{\max}$ stands for the maximum eigenvalue of $X$. 
 $g$ is a spectral function and satisfies the representation \eqref{spec} and  $\th(x)=\max\{x_1,\ldots,x_n\}$ with $x=(x_1,\ldots,x_n)\in \R^n$. 
 Take $Y\in \sub g(X)$ and observe from Proposition~\ref{subsp} that $Y=U\Diag(\lm(Y))U^\top$, where $\lm(Y)\in \sub \th(\lm(X))$ and $U\in \O^n(X)\cap \O^n(Y)$. 
Recall from \eqref{index} that $\al_1=\big\{i\in \{1,\ldots,n\}|\; \lm_i(X)=\lm_{\max}(X)\big\}$. It follows from \cite[Exercise~8.31]{rw} that 
 $$
 \sub \th(\lm(X))=\big\{ (t_1,\ldots, t_n)|\; \sum_{i=1}^nt_i=1, \;t_i \ge 0\;\;\mbox{for all}\; i\in \al_1, \;t_i=0\;\;\mbox{otherwise}\big\}.
 $$
 Taking into the consideration the formula for the second subderivative from Corollary~\ref{poly} and the notation therein and the  description of $ \sub g(\lm(X))$, we conclude that  
 $\Lm(Y)_{\al_m\al_m}=0$ for all $m\ge 2$. Moreover, we have 
 $$
 Y=U\Diag(\lm(Y))U^\top=\sum_{i=1}^n \lm_i(Y)U_iU_i^T=\sum_{i\in \al_1}  \lm_i(Y)U_iU_i^T=U_{\al_1}\Lm(Y)_{\al_1\al_1}U_{\al_1}^\top.
 $$
 Combining this and Corollary~\ref{poly}, we obtain for any $H\in \S^n$ that  
\begin{eqnarray*}
 \d^2g(X,Y)(H)&=& \dd_{K_g(X,Y)}(H)+2 \big\la \Lm(Y)_{\al_1\al_1}, U_{\al_1}^\top H ( \mu_{1}I- X)^{\dagger} H  U_{\al_1}\big\ra\\
 &=& \dd_{K_g(X,Y)}(H)+ 2\big\la Y, H ( \mu_{1}I- X)^{\dagger} H  \big\ra.
\end{eqnarray*}
This is the same formula, obtained in \cite[Theorem~2.1]{t2} for the second subderivative of the first leading eigenvalue of a symmetric matrix. 
Note that it was proven in \cite[Example~3.3]{ms} that all leading eigenvalues of a symmetric matrix is parabolically regular.
Also one can find  their second subderivatives  in \cite[Theorem~2.1]{t2}. Since  the leading eigenvalues, except the first one which is the maximum eigenvalue, are not convex, 
 Theorem~\ref{calss} and Corollary~\ref{poly} can't be utilized to cover them. That requires to extend the established theory in this section for subdifferentially regular functions in the sense of 
 \cite[Definition~7.25]{rw}, a task that we leave for our future research. 

 \item [{\rm (b)}]   Suppose that $g=\dd_{\S^n_-}$. As shown in Example~\ref{tans}, $g$ is a spectral function satisfying \eqref{spec} with $\th=\dd_{\R^n_-}$. 
 Take $Y\in N_{\S^n_-}(X)$ and observe from Proposition~\ref{subsp} that $Y=U\Diag(\lm(Y))U^\top$, where $\lm(Y)\in N_{\R^n_-}(\lm(X))$ and $U\in \O^n(X)\cap \O^n(Y)$. 
 If $\mu_1=\lm_1(X)<0$, it follows from $X\in \inte \S^n_-$ that $g$ is twice differentiable and $ \d^2g(X,Y)(H)=0$ for any $H\in \S^n$. Assume now that $\mu_1=\lm_1(X)=0$.
 Recall from \eqref{index} that $\al_1=\big\{i\in \{1,\ldots,n\}|\; \lm_i(X)=\mu_1\big\}$.  Thus we obtain 
 $$
 N_{\R^n_-}(\lm(X))=\big\{ (t_1,\ldots, t_n)|\; t_i\ge 0\;\;\mbox{for all}\; i\in \al_1, \;t_i=0\;\;\mbox{otherwise}\big\}.
 $$
 Arguing similar to (a) leads us to 
 $$
  \d^2\dd_{\S^n_-} (X,Y)(H)=\dd_{K_{\S^n_-}(X,Y)}(H)-2 \big\la Y, H  X^{\dagger} H  \big\ra\quad \mbox{for all}\; H\in \S^n.
 $$
 This formula was obtained perviously in \cite[Example~3.7]{ms} via a different approach. Similarly, we can show that 
 if $Y\in N_{\S^n_+}(X)$, the second subderivative of $\dd_{\S^n_+}$ at $X$ for $Y$ can be calculated by 
 $$
  \d^2\dd_{\S^n_+} (X,Y)(H)=\dd_{K_{\S^n_+}(X,Y)}(H)-2 \big\la Y, H  X^{\dagger} H  \big\ra\quad \mbox{for all}\; H\in \S^n.
 $$
\end{itemize}

}
\end{Example}

The second subderivative can be utilized to establish   second-order optimality conditions for different classes of optimization problems.
Doing so often requires obtaining a chain rule for the second subderivative, a task carried out in Theorem~\ref{calss} and Corollary~\ref{poly}.
Given a twice differentiable function $\ph:\S^n\to \R$ and a spectral function $g$, 
consider the optimization problem 
\begin{equation}\label{comp}
\mbox{minimize}\; \ph(X)+g(X)\quad \mbox{subject to}\;\; X\in \S^n.
\end{equation}
Below, we present a result in which second-order optimality conditions for this optimization problem  are established.
For simplicity, we are going to assume that $g$ has the assumed representation in Corollary~\ref{poly} but one can easily extend it for any $g$ satisfying the assumptions in Theorem~\ref{calss}. 
\begin{Theorem} Assume that $X$ is a feasible solution to \eqref{comp}, where the spectral function $g$ has  the representation \eqref{spec} with $\th$ therein being a polyhedral function. 
If $-\nabla \ph(X)\in \sub g(X)$, then the following second-order optimality conditions for \eqref{comp} are satisfied.

 \begin{itemize}[noitemsep,topsep=0pt]
 \item [{\rm (a)}]  If $X$ is a local minimizer of \eqref{comp}, then the second-order necessary condition
 $$
 \nabla^2\ph(X)(H,H)+2\sum_{m=1}^{r}  \big\la \Lm(Y)_{\al_m\al_m}, U_{\al_m}^\top H ( \mu_{m}I- X)^{\dagger} H  U_{\al_m}\big\ra \ge 0
 $$
 holds for all $H\in K_g(X,-\nabla \ph(X))$.
 
  \item [{\rm (b)}] The validity of the second-order condition
  $$
  \nabla^2\ph(X)(H,H)+2 \sum_{m=1}^{r}  \big\la \Lm(Y)_{\al_m\al_m}, U_{\al_m}^\top H ( \mu_{m}I- X)^{\dagger} H  U_{\al_m}\big\ra >0  
  $$
  for all $H\in K_g(X,-\nabla \ph(X))$ amounts to the existence of the constants $\ell\ge 0$ and $\ve>0$ for which the quadratic growth condition 
  $$
  \ph(X')+g(X') \ge \ph(X)+g(X)+\frac{\ell}{2}\|X'-X\|^2\quad \mbox{for all}\;\; X'\in \B_\ve(X)
  $$
  is satisfied. 
 \end{itemize}
\end{Theorem}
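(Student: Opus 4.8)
The plan is to read off both conditions from the second-subderivative of the objective $\psi:=\ph+g$ at $X$ for the subgradient $0$. First I would set $Y:=-\nabla\ph(X)$; the standing assumption $Y\in\sub g(X)$ says precisely that $0=\nabla\ph(X)+Y\in\sub\psi(X)$, so $0$ is a legitimate subgradient at which to form the second subderivative. Since $\ph$ is twice differentiable, splitting the second-order difference quotient as
\begin{equation*}
\Delta_t^2\psi(X,0)(H')=\Delta_t^2\ph\big(X,\nabla\ph(X)\big)(H')+\Delta_t^2 g(X,Y)(H')
\end{equation*}
and noting that the first summand converges to the continuous quadratic form $\nabla^2\ph(X)(H,H)$ lets me pass to the $\liminf$ and obtain the sum rule $\d^2\psi(X,0)(H)=\nabla^2\ph(X)(H,H)+\d^2 g(X,Y)(H)$. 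Feeding in Corollary~\ref{poly} then gives the explicit value: it equals $+\infty$ whenever $H\notin K_g(X,Y)$, and equals $\nabla^2\ph(X)(H,H)+2\sum_{m=1}^r\la\Lm(Y)_{\al_m\al_m},U_{\al_m}^\top H(\mu_m I-X)^\dagger H U_{\al_m}\ra$ on the critical cone.

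For part (a) I would invoke the elementary necessary condition: if $X$ is a local minimizer of $\psi$, then for every direction $H$ and all $H'$ near $H$ with $t$ small we have $\psi(X+tH')\ge\psi(X)$, so every quotient $\Delta_t^2\psi(X,0)(H')$ is nonnegative and hence $\d^2\psi(X,0)(H)\ge 0$ (cf. \cite[Theorem~13.24]{rw}). Restricting this to $H\in K_g(X,Y)$, where the second subderivative takes the finite value displayed above, yields exactly the stated second-order necessary condition.

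Part (b) is where the real content lies. The strategy is to apply the quadratic-growth characterization for prox-regular, twice epi-differentiable functions. I would first certify these hypotheses for $\psi$: since $\th$ is polyhedral it is parabolically epi-differentiable (\cite[Exercise~13.61]{rw}) and parabolically regular (\cite[Example~3.2]{ms}), so by Theorem~\ref{sochpar}(a) and Corollary~\ref{poly} the spectral function $g$ is both parabolically epi-differentiable and parabolically regular at $X$ for $Y$, hence twice epi-differentiable there; adding the $C^2$ function $\ph$ preserves twice epi-differentiability, while $g$ being convex together with $\ph\in C^2$ makes $\psi$ prox-regular. With these in hand, \cite[Theorem~13.24]{rw} (equivalently \cite[Theorem~4.1]{hs}) asserts that the quadratic growth condition holds at $X$ precisely when $\d^2\psi(X,0)(H)>0$ for all $H\neq 0$. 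Finally I would reduce this positivity to the critical cone: for $H\notin K_g(X,Y)$ the second subderivative is $+\infty$, so the inequality is automatic, whereas for $0\neq H\in K_g(X,Y)$ it coincides with the displayed strict second-order condition; this establishes the claimed equivalence.

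The main obstacle I anticipate is the regularity bookkeeping in part (b): one must verify that $\psi$ genuinely inherits twice epi-differentiability and prox-regularity so that the growth theorem applies, and that the sum rule for the second subderivative survives the smooth perturbation without spurious cross terms. Everything else—the necessary condition and the reduction of the positivity test to the critical cone—is then a direct consequence of the explicit formula furnished by Corollary~\ref{poly}.
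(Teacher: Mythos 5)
Your proof is correct and, in its skeleton, is the same as the paper's: a sum rule splitting $\d^2(\ph+g)(X,0)$ into $\nabla^2\ph(X)(H,H)+\d^2 g(X,-\nabla\ph(X))(H)$ (the paper simply cites \cite[Exercise~13.18]{rw}, you re-derive it from the difference quotients), the explicit formula of Corollary~\ref{poly}, and \cite[Theorem~13.24]{rw} to convert statements about $\d^2(\ph+g)(X,0)$ into the necessary condition and the quadratic-growth equivalence.

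Where you diverge is part (b): you treat the quadratic-growth characterization as conditional on prox-regularity and twice epi-differentiability of $\psi=\ph+g$, and you devote most of your effort to certifying those properties. This is unnecessary machinery. \cite[Theorem~13.24(b)]{rw} is an \emph{unconditional} statement: for any extended-real-valued function finite at the reference point, positivity of the second subderivative at $0$ over nonzero directions (together with $0$ being a regular subgradient, which your first paragraph already gives, since $0\in\sub\psi(X)$ and $\psi$ is the sum of a $C^2$ function and a convex function) is equivalent to the quadratic growth condition — no prox-regularity, no twice epi-differentiability. This is precisely why the paper's proof is two lines long. Your extra verifications (twice epi-differentiability of $g$ via Theorem~\ref{sochpar} and Corollary~\ref{tepi}, prox-regularity of $\psi$) are all true in this setting, so no step of your argument fails; you are simply carrying hypotheses the theorem does not ask for, and had any of them failed your proof structure would have stalled even though the conclusion still holds. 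One small point in your favor: by restricting the positivity test to $0\neq H\in K_g(X,Y)$ you implicitly repair a minor imprecision in the theorem's wording, since at $H=0$ the displayed strict inequality cannot hold and the statement should be read over nonzero critical directions.
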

\begin{proof} It follows from \cite[Exercise~13.18]{rw} that 
$$
 \d^2(\ph+g)(X,0)(H)=   \nabla^2\ph(X)(H,H)+  \d^2g(X,-\nabla \ph(X))(H)
 $$ for any $H\in \S^n$.
Both claims in (a) and (b) then result immediately from \cite[Theorem~13.24]{rw} and Corollary~\ref{poly}. 
\end{proof}

Our next application is to provide sufficient conditions for twice epi-differentiability of spectral functions, a concept with important consequences in second-order variational analysis 
and parametric optimization. Recall from  \cite[Definition~13.6]{rw} that  a function $f:\X \to \oR$ is said to be {twice epi-differentiable} at $\bar x$ for $\ov\in\X$, with $f(\ox) $ finite, 
if the sets $\epi \Delta_t^2 f(\bar x , \ov)$ converge to $\epi \d^2 f(\bar x,\ov)$ as $t\downarrow 0$ in the sense of set convergence from \cite[Definition~4.1]{rw}, where `$\epi $' stands for the epigraph of a function.  This can be equivalently described via   
\cite[Proposition~7.2]{rw} that for every sequence $t_k\downarrow 0$ and every $w\in\X$, there exists a sequence $w_k \to w$ such that
\begin{equation*}\label{dedf}
\d^2 f(\bar x,\ov)(w) = \lim_{k \to \infty} \Delta_{t_k}^2 f(\ox , \ov)(w_k).
\end{equation*}

Twice epi-differentiability is a geometric notion of second-order approximation for extended-real-valued functions and was  defined by Rockafellar in \cite{r88}.
Its central role in second-order variational analysis, parametric optimization, and numerical algorithms has been demonstrated in \cite{rw,mms2,ms,hs}. 
It was observed recently in \cite[Corollary~5.5]{ms} that parabolic regularity of certain composite functions yields their twice epi-differentiability. A similar conclusion can be drawn for
 spectral functions as demonstrated below. 

\begin{Corollary}[twice epi-differentiability of spectral functions] \label{tepi}
Assume that the spectral function $g=\th\circ \lm$ in \eqref{spec} is   locally Lipschitz continuous with respect to its domain, lsc, and convex. 
If $Y\in \sub g(X)$  and $\th$ is parabolically regular at $\lm(X)$ for $\lm(Y)$ and parabolically epi-differentiable at $\lm(X)$, then  $g$ is  twice epi-differentiable at $X$ for $Y$.  

\end{Corollary}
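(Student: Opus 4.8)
The plan is to assemble the two structural second-order properties already obtained for the spectral function $g=\th\circ\lm$—its parabolic regularity and its parabolic epi-differentiability—and then route them through the general principle of \cite{ms} that connects these notions to twice epi-differentiability. Conceptually, the whole difficulty of the non-smooth inner map $\lm(\cdot)$ has already been absorbed into Theorems~\ref{calss} and \ref{sochpar}, so the corollary itself should reduce to a clean bookkeeping assembly.

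First I would check that all the hypotheses of Theorem~\ref{calss}(a) are in force: $g$ is locally Lipschitz continuous relative to its domain, lsc, and convex, we have $Y\in\sub g(X)$, and $\th$ is both parabolically regular at $\lm(X)$ for $\lm(Y)$ and parabolically epi-differentiable at $\lm(X)$. Theorem~\ref{calss}(a) then delivers that $g$ is parabolically regular at $X$ for $Y$. Next I would establish that $g$ is parabolically epi-differentiable at $X$, i.e.\ at $X$ for every direction $H$ with $\d g(X)(H)$ finite. Invoking the chain rule of Theorem~\ref{specsubd}, one has $\d g(X)(H)=\d\th(\lm(X))(\lm'(X;H))$, so finiteness of $\d g(X)(H)$ is equivalent to finiteness of $\d\th(\lm(X))(\lm'(X;H))$; and because $g$ is locally Lipschitz relative to its domain, $\dom\d g(X)=T_{\ss\dom g}(X)$ by Proposition~\ref{praepidom}(a), so such $H$ indeed lie in the tangent cone required by Theorem~\ref{sochpar}(a). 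Since $\th$ is parabolically epi-differentiable at $\lm(X)$, it is in particular parabolically epi-differentiable at $\lm(X)$ for the direction $\lm'(X;H)$ whenever $\d\th(\lm(X))(\lm'(X;H))$ is finite, and Theorem~\ref{sochpar}(a) then yields parabolic epi-differentiability of $g$ at $X$ for each such $H$.

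With both ingredients in hand—parabolic regularity of the convex function $g$ at $X$ for $Y$ and its parabolic epi-differentiability at $X$—I would finish by appealing to the general result \cite[Corollary~5.5]{ms}, which guarantees that a convex function enjoying these two second-order regularity properties is twice epi-differentiable at the point in question for the chosen subgradient. This immediately gives twice epi-differentiability of $g$ at $X$ for $Y$. The main obstacle I anticipate is not conceptual but is the matching of quantifiers: one must ensure that the directions furnished by the parabolic epi-differentiability of $\th$ correspond exactly to the directions $\lm'(X;H)$ demanded by the chain rule, and that the hypotheses of \cite[Corollary~5.5]{ms} are met for \emph{all} directions in $\dom\d g(X)$ rather than merely those in the critical cone $K_g(X,Y)$. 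It is precisely the passage through $\th$ together with Theorem~\ref{sochpar}(a) that secures the full-domain statement and thereby removes this obstacle.
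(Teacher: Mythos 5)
Your proposal is correct and follows essentially the same route as the paper: the paper's proof simply combines Theorem~\ref{calss} (parabolic regularity of $g$ at $X$ for $Y$) with the general transfer principle of \cite{ms} that parabolic regularity together with parabolic epi-differentiability yields twice epi-differentiability, which is exactly your assembly, with your extra (and harmless) explicit detour through Theorem~\ref{specsubd}, Proposition~\ref{praepidom}(a), and Theorem~\ref{sochpar}(a) to secure parabolic epi-differentiability of $g$. The only discrepancy is the citation: the paper invokes \cite[Theorem~3.8]{ms}, the abstract result for extended-real-valued functions, whereas you cite \cite[Corollary~5.5]{ms}, which per this paper's own discussion is the composite-function version with a twice differentiable inner map; your application is still defensible (take the inner map to be the identity), but the abstract theorem is the natural reference.
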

\begin{proof} 
The claimed twice epi-differentiability of $g$ at  $X$ for $Y$ results directly  from \cite[Theorem~3.8]{ms} and Theorem~\ref{calss}. 
\end{proof}

Twice epi-differentiability of leading eigenvalues and the sum of largest eigenvalues of a symmetric matrix was established in \cite[Theorem~2.1]{t2}
using a different approach. Corollary~\ref{tepi} goes far beyond the framework in \cite{t2} to achieve twice epi-differentiability of spectral functions.
We, however, can't get this property for all leading eigenvalues, except the first one, from Corollary~\ref{tepi} since these spectral functions are not convex.
As explained in Example~\ref{ssex}(a), this can be accomplished if the established theory in this section is generalized for subdifferentially regular functions.
Note also that a characterization of directionally differentiability of the proximal mapping of spectral functions can be found in \cite[Theorem~3]{dst}.
Recall from \cite[Theorem~7.18]{bec} that if the spectral function $g$ in \eqref{spec} is lsc and convex, its proximal mapping can be calculated by
$$
\prox_g(X):=\mbox{argmin}_{W\in \S^n}\big\{g(W)+\sm \|W-X\|^2\big\}=U\Diag\big(\prox_\th(\lm(X))\big)U^\top,
$$
where $U\in \O^n(X)$. It follows from \cite[Theorem~3]{dst}   that $\prox_g$ is directionally differentiable at $X$ if and only if $\prox_\th$ is directionally differentiable at $\lm(X)$.
It also follows from \cite[Exercise~13.45]{rw} that twice epi-differentiability of $g$ at $X$ for $Y$ amounts to directional differentiability of  $\prox_g$ at $X+Y$.
Combining these, we can conclude that $g$ is twice epi-differentiability of $g$ at $X$ for $Y$ if and only if $\th$ enjoys the same property at $\lm(X)$ for $\lm(Y)$.
It is not clear yet to us whether such an equivalence can be achieved via our approach. Note that  our main result in this section provides the equivalence for 
parabolic regularity of $g$ and $\th$. It is worth mentioning here that parabolic regularity is strictly stronger than twice epi-differentiability and has no counterpart 
for the proximal mapping. Thus, Theorem~\ref{calss} can't  be derived from \cite[Theorem~3]{dst}.

We close this section by establishing a characterization of   twice semidifferentiability of the spectral function $g$ in \eqref{spec}
when the symmetric function $\th$ therein is convex. 
Recall from \cite[Exercise~13.7]{rw} that a function $f\colon\R^n\to\oR$ is called {\em twice semidifferentiable} at $\ox$ if 
there exists a continuous function $h$, which is positive homogeneous of degree $2$, and 
\begin{equation*}
f(x)=f(\ox)+\d f(\ox)(x-\ox)+\sm h(x-\ox)+o(\|x-\ox\|^2).
\end{equation*}
In this case, $h$ is called the {\em second semiderivative} of $f$ at $\ox$ and  is denoted by $\d^2 f(\ox)$. 

\begin{Corollary}\label{tsdi}
Assume that $X\in \S^n$ and   $\mu_1>\cdots>\mu_r$ are the distinct eigenvalues of $X$ that $\th:\R^n\to \R$ is a differentiable symmetric convex function.
 Then  
 $\th$ is twice semidifferentiable at $\lm(X)$  if and only if the spectral function  $g=\th\circ \lm$ is twice semidifferentiable  at $X$. Moreover, 
 in this case, we have 
\begin{equation*} 
\d^2g(X)(H)= \d^2\th\big(\lm(X)\big)\big(\lm'(X;H)\big) + 2\sum_{m=1}^{r}  \big\la \Lm(Y)_{\al_m\al_m}, U_{\al_m}^\top H ( \mu_{m}I- X)^{\dagger} H  U_{\al_m}\big\ra,
\end{equation*}
where   $\al_m$, $m=1,\ldots,r$, come from \eqref{index}, $U\in \O^n(X)\,\cap\, \O^n(Y)$,  $\Lm(Y)=\Diag(\lm(Y)\big)$ and $H\in \S^n$.

 \end{Corollary}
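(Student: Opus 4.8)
The plan is to prove the two implications separately: the forward direction (twice semidifferentiability of $\th$ at $\lm(X)$ implies that of $g$ at $X$, together with the displayed formula) by a direct second-order expansion of the composition $g=\th\circ\lm$, and the converse by exploiting the representation $\th=g\circ\Diag$ from \eqref{spec2}. First I would record the standing reductions. Since $\th$ is differentiable, $g=\th\circ\lm$ is differentiable at $X$ by \cite{l96}, so $\sub g(X)=\{\nabla g(X)\}$; set $Y:=\nabla g(X)$. By Proposition~\ref{subsp} there is $U\in\O^n(X)\cap\O^n(Y)$ with $\lm(Y)=\nabla\th(\lm(X))$, and, because $\th$ is symmetric, differentiating $\th(Qx)=\th(x)$ at $x=\lm(X)$ for block permutations $Q$ shows that $\nabla\th(\lm(X))$ is constant on each block $\al_m$; denoting this constant $c_m$, we get $\Lm(Y)_{\al_m\al_m}=c_mI$. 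In particular $\d g(X)(H)=\la Y,H\ra$ and, as in \eqref{sdcy}, $\la Y,H\ra=\sum_{m=1}^r\la\Lm(Y)_{\al_m\al_m},U_{\al_m}^\top HU_{\al_m}\ra$.

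For the forward direction I would use the genuine (non-parabolic) second-order expansion \eqref{secondexp1}. Writing $M_m(H):=U_{\al_m}^\top HU_{\al_m}+U_{\al_m}^\top H(\mu_mI-X)^\dagger HU_{\al_m}$, that estimate yields $\lm(X+H)=\lm(X)+\xi(H)+O(\|H\|^3)$, where the $\al_m$-block of $\xi(H)$ is $\lambda(M_m(H))$. The Lipschitz bound \eqref{lipei} applied to $M_m(H)$ and $U_{\al_m}^\top HU_{\al_m}$, together with \eqref{dirla}, gives $\xi(H)=\lambda'(X;H)+O(\|H\|^2)$ with $\|\xi(H)\|=O(\|H\|)$. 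Substituting $v:=\lm(X+H)-\lm(X)$ into the twice-semidifferentiable expansion of $\th$ at $\lm(X)$, the first-order term becomes $\la\nabla\th(\lm(X)),v\ra=\sum_m c_m\tr M_m(H)+O(\|H\|^3)$, and expanding $\tr M_m(H)=\tr(U_{\al_m}^\top HU_{\al_m})+\tr(U_{\al_m}^\top H(\mu_mI-X)^\dagger HU_{\al_m})$ reproduces exactly $\la Y,H\ra+\sum_m\la\Lm(Y)_{\al_m\al_m},U_{\al_m}^\top H(\mu_mI-X)^\dagger HU_{\al_m}\ra$.

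The second-order term $\sm\,h_\th(v)$ with $h_\th:=\d^2\th(\lm(X))$ is where convexity enters. I would use that $h_\th$ is a convex, continuous, degree-two positively homogeneous function (the difference quotients defining it are convex in the direction), hence locally Lipschitz with Lipschitz constant of order $\rho$ on the ball of radius $\rho$; since $v=\lambda'(X;H)+O(\|H\|^2)$ with $\|v\|=O(\|H\|)$, this forces $h_\th(v)=h_\th(\lambda'(X;H))+O(\|H\|^3)$. Collecting everything yields $g(X+H)=g(X)+\la Y,H\ra+\sm\big[\d^2\th(\lm(X))(\lambda'(X;H))+2\sum_m\la\Lm(Y)_{\al_m\al_m},U_{\al_m}^\top H(\mu_mI-X)^\dagger HU_{\al_m}\ra\big]+o(\|H\|^2)$. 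As $H\mapsto\lambda'(X;H)$ is continuous and positively homogeneous of degree one (Proposition~\ref{olemma}) and the cross term is a quadratic form in $H$, the bracketed function is continuous and degree-two positively homogeneous, which is precisely twice semidifferentiability of $g$ with the claimed second semiderivative.

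For the converse, I would invoke orthogonal invariance of $g$: the identity $g(X+H)=g\big(\Diag(\lm(X))+U^\top HU\big)$ shows that twice semidifferentiability of $g$ at $X$ is equivalent to that at $\Diag(\lm(X))$. Then, using $\th=g\circ\Diag$ with $\Diag$ linear and $\|\Diag(v)\|=\|v\|$, the expansion of $g$ at $\Diag(\lm(X))$ along $\Diag(v)$ transfers verbatim to a twice-semidifferentiable expansion of $\th$ at $\lm(X)$. The hard part is the forward direction, and specifically the bookkeeping of the error terms: one must confirm that all the non-smoothness of $\lm(\cdot)$, which is merely directionally differentiable, is absorbed into $o(\|H\|^2)$, and this hinges on the convexity-driven local Lipschitz control of $\d^2\th(\lm(X))$ described above, since for a merely continuous degree-two homogeneous $h_\th$ the estimate $h_\th(v)=h_\th(\lambda'(X;H))+o(\|H\|^2)$ could fail.
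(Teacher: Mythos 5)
Your proof is correct, but it follows a genuinely different route from the paper's. The paper stays inside its parabolic-regularity machinery: differentiability plus twice semidifferentiability of $\th$ make the second subderivative of $\th$ everywhere finite and equal to its second semiderivative, so $\th$ is parabolically epi-differentiable and parabolically regular at $\lm(X)$ for $\nabla\th(\lm(X))$ by \cite[Example~4.7(d)]{ms}; Theorem~\ref{calss}(a) then supplies the formula for $\d^2 g\big(X,\nabla g(X)\big)$ (note $K_g(X,\nabla g(X))=\S^n$ here), Corollary~\ref{tepi} gives twice epi-differentiability of $g$, and \cite[Theorem~4.3]{r20} upgrades this to twice semidifferentiability with $\d^2 g\big(X,\nabla g(X)\big)=\d^2 g(X)$; the converse is delegated to the chain rule \cite[Proposition~8.2(i)]{mms1} applied to $\th=g\circ\Diag$. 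You instead run a direct, self-contained second-order expansion of $g=\th\circ\lm$: Torki's estimate \eqref{secondexp1}, the block-constancy of $\nabla\th(\lm(X))$ forced by symmetry, and the identity ``sum of eigenvalues equals trace'' convert the first-order term of $\th$'s expansion into $\la Y,H\ra+\sum_{m=1}^r\la\Lm(Y)_{\al_m\al_m},U_{\al_m}^\top H(\mu_mI-X)^{\dagger}HU_{\al_m}\ra$ (which is precisely where the factor $2$ in the displayed formula originates, since the second semiderivative carries the coefficient $\sm$), while the local Lipschitz property of the convex, degree-two positively homogeneous $h_\th$, with modulus scaling linearly on balls, absorbs the $O(\|H\|^2)$ gap between $\lm(X+H)-\lm(X)$ and $\lm'(X;H)$ into $o(\|H\|^2)$; your converse via orthogonal invariance plus the linear isometry $\Diag$ is also sound, and usefully makes explicit the reduction from $X$ to $\Diag(\lm(X))$ that the paper's appeal to \cite{mms1} leaves implicit. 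What your route buys is independence from the heavy apparatus (no parabolic regularity, no twice epi-differentiability, no \cite{r20}) and a transparent accounting of where each term of the formula comes from; what the paper's route buys is brevity given Theorem~\ref{calss} and the automatic identification of the second semiderivative with the second subderivative, tying the corollary to the earlier results. One side remark: your closing claim that the key estimate ``could fail'' for a merely continuous degree-two homogeneous $h_\th$ is not quite accurate --- uniform continuity of $h_\th$ on the unit ball together with homogeneity already yields $h_\th(v)=h_\th(\lm'(X;H))+o(\|H\|^2)$ --- but since you invoke convexity only to obtain the cleaner Lipschitz bound, this inaccuracy does not affect the validity of your argument.
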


\begin{proof} Observe first that since $\th$ is convex and finite-valued, it is locally Lipschitz continuous on $\R^n$. 
Moreover, because $\th$ is differentiable, we deduce from \cite[Theorem~1.1]{l96} that $g$ is differentiable at $X$. 
Suppose first that $\th$ is twice semidifferentiable at $\lm(X)$. Since $\th$ is differentiable, it follows from twice semidifferentiability of $\th$ that 
the second subderivative of $\th$ coincides with its second semiderivative, namely 
$$
\d^2\th\big(\lm(X),\nabla \th(\lm(X))\big)\big(\lm'(X;H)\big)=\d^2\th\big(\lm(X)\big)\big(\lm'(X;H)\big)\quad \mbox{for all}\; H\in \S^n.
$$
This, combined with the formula of the second subderivative of $g$ in Theorem~\ref{calss}(a), tells us  that for any $H\in \S^n$, $\d^2g(X,\nabla g(X))(H)$ is always finite. 
By \cite[Example~4.7(d)]{ms}, $\th$ is parabolically epi-differentiable and parabolically regular at $\lm(X)$ for $\nabla \th(\lm(X))$. Thus, it results from Corollary~\ref{tepi} that 
$g$ is twice epi-differentiable at $X$ for $\nabla g(X)$. Combining these with \cite[Theorem~4.3]{r20} tells us that $g$ is twice semidifferentiable 
at $X$ and that $\d^2g(X,\nabla g(X))(H)=\d^2g(X)(H)$ for any $H\in \S^n$. The latter, coupled with Theorem~\ref{calss}(a),  proves the claimed formula for the second semiderivative of $g$ at $X$.
Conversely, assume that  $g$ is twice semidifferentiable at $X$. We know from  \eqref{spec2} that    the symmetric function $\th$  can be represented as  $\th=g\circ F$ with  $F(x):= \Diag(x)$ for all $x\in \R^n$. 
Since $F$ is always twice differentiable and $g$ is twice semidifferentiable at $X$, we conclude from \cite[Proposition~8.2(i)]{mms1} that $\th$ is twice semidifferentiable at $\lm(X)$, which completes the proof. 
\end{proof}

One can also show similar to the proof of Corollary~\ref{tsdi} that $\th$ has a quadratic expansion at $\lm(X)$ if and only if $\th\circ \lm$ enjoys the same property at $X$.
Note that it was shown  in \cite[Theorem~3.3]{ls} by Lewis and Sendov (see also \cite{dp} for a simplified proof) that twice differentiability of $g$ and $\th$ are also equivalent. 
Whether such a result can be derived from our established theory in this section remains an open question for our future research.

\section{Conclusion and Future Research} 

In this paper, we developed a second-order theory of generalized differentiation for spectral functions. Our results  rely heavily upon the metric subregualrity 
constraint qualification, which automatically holds for this setting. Our main focus was to characterize parabolic regularity of this class of functions when 
they are convex. Moreover,  we were able to calculate their second subderivative.

Our results   raise several questions for our future search. First and foremost is the extension of our established theory for subdifferentially regular spectral functions.
This will allow us to provide a unified umbrella under which all the available results for both convex and nonconvex spectral functions can be covered by our approach. 
Also, it is interesting to see whether a similar characterization of parabolic regularity can be achieved for twice epi-differentiability of spectral functions. Such a 
characterization can be obtained from \cite[Theorem~3]{dst}  for convex 
spectral functions. However, we can't   use \cite{dst} to obtain a similar characterization for nonconvex spectral functions. It is also important 
to see whether our results can be utilized to characterize twice differentiability of spectral functions, which was perviously obtained by Lewis and Sendov in \cite[Theorem~3.3]{ls}.

\small


\begin{thebibliography}{10}

\bibitem{bec} {  Beck  A } (2017) {  First-Order Methods in Optimization}  (SIAM).  

\bibitem{bz1}  Ben-Tal A, Zowe J(1982)  A unified theory of first- and second-order conditions for extremum problems in topological vector
spaces. Math. Program.  {19}:39--76.

\bibitem{bs} Bonnans JF, Shapiro A (2000) Perturbation Analysis of Optimization Problems (Springer, New York).

\bibitem{bl}  Borwein JM,   Lewis AS (2008) Convex Analysis and Nonlinear Optimization (2nd edition, Springer, New York). 

\bibitem{cs} Chan ZX,  Sun DF (2008) Constraint nondegeneracy, strong regularity, and nonsingularity in semidefinite programming. SIAM J. Optim. 19: 370-396.

\bibitem{ch0}  Chaney RW (1985)  {  On second derivatives for nonsmooth functions}.  Nonlinear Anal. {9}:1189--1209.




\bibitem{cd}  Cui Y,  Ding C (2019) Nonsmooth composite matrix optimization: strong Regularity, constraint nondegeneracy and beyond. Preprint.  arXiv:1907.13253.

\bibitem{cdz}  Cui Y,  Ding C, Zhao XY (2019)   Quadratic growth conditions for convex matrix optimization problems associated with spectral functions, SIAM J. Optim. 27:2332--2355.


\bibitem{dlms} Daniilidis A, Lewis AS, Malick J,  Sendov H (2008) Prox-regularity of spectral functions and spectral sets.  J. Convex Anal. 15(3):547--560. 

\bibitem{d12} Ding C, (2012) An introduction to a class of matrix optimization problems. PhD Thesis, National University of Singapore.

\bibitem{dr} Dontchev AL, Rockafellar RT (2014) Implicit Functions and Solution Mappings: A View from Variational Analysis (2nd edition, Springer, New York).

\bibitem{da}  Davis C (1957)  All convex invariant functions of hermitian matrices.  Arch. Math. 8:276--278.

\bibitem{dst}  Ding C,  Sun D,  Sun J, Toh K (2018) Spectral operators of matrices.  Math. Program. 168:509--531. 

\bibitem{dp}    Drusvyatskiy D,  Paquette C (2018) Variational analysis of spectral functions simplified.  J. Convex Anal. 25(1):119-134.

\bibitem{gyz}    Gfrerer H, Ye JJ ,  Zhou J (2022) Second-order optimality conditions for non-convex set-constrained optimization problems. Math. Oper. Res.  47: 2344-2365.

\bibitem{go}    Gfrerer H, Outrata  JV (2016) On Lipschitzian properties of implicit multifunctions. SIAM J. Optim., 26:  2160-2189.

\bibitem{hs} {Hang NTV, Sarabi ME}  (2021) {Local convergence analysis of augmented Lagrangian methods for piecewise linear-quadratic composite optimization problems}.  SIAM J. Optim. {  31}(4): 2665--2694.

\bibitem{l96} Lewis AS (1996) Convex analysis on the Hermitian matrices. SIAM J. Optim. {6}:164--177.


\bibitem{l99} { Lewis AS} (1999) {  Nonsmooth analysis of eigenvalues}. Math. Program. {84}:1--24. 

\bibitem{ls} Lewis AS, Sendov HS (2001)   Twice differentiable spectral functions. SIAM J. Matrix Anal. Appl., 23:368--386.

 

\bibitem{mms1} Mohammadi A, Mordukhovich BS, Sarabi ME (2021)  Variational analysis of composite models with applications to continuous optimization.  Math. Oper. Res.  47: 397--426.

\bibitem{mms2} Mohammadi A, Mordukhovich BS, Sarabi ME (2021)  Parabolic regularity in geometric variational analysis. Trans. Amer. Soc. 374:1711--1763.

\bibitem{mms3} Mohammadi A, Mordukhovich BS, Sarabi ME (2020)  Superlinear convergence of the sequential quadratic programming method in constrained optimization, J. Optim. Theory Appl., 186: 731--758.

\bibitem{ms} Mohammadi A,  Sarabi ME (2020)  Twice epi-differentiability of extended real-valued functions with applications in composite optimization. SIAM J. Optim. 30:2379--2409.



\bibitem{r88} Rockafellar RT (1988) First- and second-order epi-differentiability in nonlinear programming. Trans. Amer. Math. Soc. 307:75--108.


\bibitem{r20}  Rockafellar RT (2000) Second-order convex analysis. J. Convex Nonlin. Anal. {  1}:1--16.

\bibitem{rw} Rockafellar RT, Wets RJ-B (1998) Variational Analysis (Springer, Berlin).

\bibitem{t1} { Torki M} (2001) {  Second-order directional derivatives of all eigenvalues of a symmetric matrix}. Nonlinear Anal.  46:1133--1150.

\bibitem{t2} { Torki M} (1998) { First- and second-order epi-differentiability in eigenvalue optimization}. J. Math. Anal. Appl.  234:391--416.

\bibitem{zzx}  Zhang L, Zhang N, Xiao X (2013) On the second-order directional derivatives of singular values of matrices and symmetric matrix-valued functions. Set-Valued Var. Anal {21}:557–586.


\end{thebibliography}
\end{document}